\theoremstyle{plain} 
\newtheorem{thm}{Theorem}[section]
\newtheorem{lemma}[thm]{Lemma}
\newtheorem{prop}[thm]{Proposition}
\newtheorem{corollary}[thm]{Corollary}
\theoremstyle{definition}
\newtheorem{definition}[thm]{Definition}
\newtheorem{example}[thm]{Example}
\newtheorem{remark}[thm]{Remark}
 \theoremstyle{plain} 
\newcommand{\thistheoremname}{}
\newtheorem{genericthm}[thm]{\thistheoremname}
\newtheorem*{genericthm*}{\thistheoremname}
\newenvironment{namedthm*}[1]
  {\renewcommand{\thistheoremname}{#1}%
   \begin{genericthm*}}
  {\end{genericthm*}}
\newcommand{\C}{\mathbb{C}}
\newcommand{\Q}{\mathbb{Q}}
\newcommand{\Z}{\mathbb{Z}}
\newcommand{\R}{\mathbb{R}}
\newcommand{\cF}{\mathcal{F}}
\newcommand{\cH}{\mathcal{H}}
\newcommand{\cI}{\mathcal{I}}
\newcommand{\cJ}{\mathcal{J}}
\newcommand{\cM}{\mathcal{M}}
\newcommand{\cN}{\mathcal{N}}
\newcommand{\cO}{\mathcal{O}}
\newcommand{\cV}{\mathcal{V}}
\newcommand{\sD}{\mathscr{D}}
\newcommand{\sO}{\mathscr{O}}
\newcommand{\mc}[1]{{\mathcal{#1}}}
\newcommand{\mb}[1]{{\mathbb{#1}}}
\renewcommand{\d}{\partial}
\newcommand{\gr}{\mathrm{gr}}
\newcommand{\Db}{\mathrm{Db}}
\newcommand{\shom}{{\mathcal{H}\mathit{om}}}
\newcommand{\lct}{\mathrm{lct}}
\renewcommand{\Re}{\mathop{\mathrm{Re}}}
\DeclareMathOperator*{\mult}{mult}
\newcommand{\op}{\operatorname}
\DeclareMathOperator{\codim}{codim}
\DeclareMathOperator*{\Res}{Res}
\newcommand{\contract}{\mathrel{\lrcorner}}
\DeclareMathOperator{\supp}{supp}
\title[Archimedean zeta function and Hodge theory]{Archimedean zeta functions, singularities, and Hodge theory}
\author{Dougal Davis, Andr\'as C. L\H{o}rincz and Ruijie Yang*}
\thanks{$\ast$: corresponding author, email address: \texttt{ruijie.yang@ku.edu}}
\begin{document}

\begin{abstract}
We use Hodge theory to relate poles of the Archimedean zeta function $Z_f$ of a holomorphic function $f$ with several invariants of singularities. First, we prove that the largest nontrivial pole of $Z_f$ is the negative of the minimal exponent of $f$, whose order is determined by the multiplicity of the corresponding root of the Bernstein--Sato polynomial $b_f(s)$, resolving in a strong sense a question of Musta\c{t}\u{a}--Popa. This simultaneously generalizes a result of Loeser for isolated singularities and of Koll\'ar--Litchin for the log canonical threshold, and improves them by accounting for the multiplicity. On the other hand, we give an example of $f$ where a root of $b_f(s)$ is not a pole of $Z_f$, answering a question of Loeser from 1985 in the negative. As a byproduct, we give a positive answer to a question of Budur--Walther in the case of the minimal exponent.  In general, we determine poles of $Z_f$ from the Hodge filtration on vanishing cycles, sharpening a result of Barlet.  Finally, we obtain analytic descriptions of the $V$-filtration of Kashiwara and Malgrange, Hodge and higher multiplier ideals, addressing another question of Musta\c{t}\u{a}--Popa. The proofs mainly rely on a positivity property of the polarization on the lowest piece of the Hodge filtration on a complex Hodge module in the sense of Sabbah--Schnell.

\end{abstract}
\maketitle

\section{Introduction}

Let $f$ be a nonconstant holomorphic function on an open subset $X\subseteq\C^n$. For a smooth function $\varphi$ on $\C^n$ with compact support contained in $X$, the following integral converges to a holomorphic function of $s$:
\begin{equation}\label{eq:intro zeta function}
Z_f(\varphi; s) = \int_{\mb{C}^n} |f(x)|^{2s}\varphi(x)d\mu(x),  \quad \textrm{for $\Re s > 0$},\end{equation}
and extends meromorphically to $s \in \mb{C}$ (here $d\mu$ is the Lebesgue measure). The \emph{Archimedean zeta function} $Z_f$ is the distribution sending $\varphi$ to $Z_f(\varphi;s)$. 

In this paper, we use the theory of polarized complex Hodge modules \cite{MHMproject} to study the relationship between the poles 
of $Z_f$ and singularity invariants of the hypersurface $D$ defined by $f$. We show (Theorem \ref{thm: Loesers conjecture}) that the negative of the minimal exponent \cite{Saito93} is a pole, of a specified order, but that this need not hold for other roots of the Bernstein--Sato polynomial (Theorem \ref{thm: counterexample to Loeser}). Using similar methods, we find other poles (Theorem \ref{thm: producing poles}) via the Hodge filtration on vanishing cycles, and prove (Theorems \ref{thm:intro Hodge ideals}, \ref{thm:intro weighted ideals} and \ref{thm: analytic charaterization of Vfiltration}) that the (weighted) Hodge ideals \cite{MP16, OlanoweightedHodgeideal}, higher multiplier ideals \cite{SY23} and Kashiwara--Malgrange $V$-filtration can be read off from the poles of zeta functions. Our results answer a question of Loeser \cite[p87]{Loeser}, several questions of Musta\c{t}\u{a}--Popa \cite{Mustatazagtalk,Mustataprivate} and refine previous work of Barlet \cite{Barlet84,Barlet86}, Loeser \cite{Loeser}, and Koll\'ar--Litchin \cite{kollar,lichtin}. We also answer positively a question of Budur--Walther in the case of the minimal exponent (Corollary \ref{cor: Budur Walther question for minimal exponent}).

\subsection{The Bernstein--Sato polynomial}
Our first two results concern poles of $Z_f$ and roots of the \emph{Bernstein--Sato polynomial} $b_f(s)$ of $f$. Up to shrinking  $X$, $b_f(s)$ exists by \cite[Theorem 3.3]{Kas76}\footnote{Note that in the algebraic case this implies global existence, since algebraic varieties are quasi-compact in the Zariski topology.}. A \emph{pole} of $Z_f$ is a pole of $Z_f(\varphi;s)$ for some $\varphi$ whose support is contained in $X$. Bernstein \cite{bernstein} used $b_f(s)$ to construct $Z_f$, which implies that every pole of $Z_f$ is a root of $\prod_{i=0}^{r}b_f(s+i)$ with $r\gg 0$, and that the order as a pole can be at most the multiplicity as a root. Our first main theorem gives a converse in the case of a special root, the negative of the minimal exponent, which has received significant attention in recent years \cite{MPVfiltration,SY23,RR24}. 

The \emph{minimal exponent} $\tilde{\alpha}_f$ is the negative of the largest root of $\tilde{b}_f(s) = b_f(s)/(s + 1)$. Similarly, $Z_f$ has trivial poles at the negative integers, which can be removed by defining the \emph{reduced zeta function}: $\tilde{Z}_f(\varphi;s) = \frac{Z_f(\varphi;s)}{\Gamma(s + 1)}$, where $\Gamma(s)$ denotes the gamma function. 
\begin{thm} \label{thm: Loesers conjecture}
The root $ -\tilde{\alpha}_f$ is a pole of $\tilde{Z}_f$, of order equal to its multiplicity as a root of $\tilde{b}_f(s)$. In particular, $-\tilde{\alpha}_f$ is the largest pole of $\tilde{Z}_f$.
\end{thm}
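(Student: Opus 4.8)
The plan is to isolate the leading Laurent coefficient of $\tilde Z_f$ at $s=-\tilde\alpha_f$, identify it with a polarization pairing on the bottom piece of the Hodge filtration on the nearby cycles, and invoke positivity of that pairing to see it is a nonzero distribution; the ``largest pole'' assertion will then be immediate from the location of all other candidate poles.

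\textbf{Reductions.} By Bernstein, every pole of $Z_f$ is the negative of a root of $\prod_{i\ge 0}b_f(s+i)$, with order at most the corresponding root multiplicity. The roots of $b_f$ are $-1$ together with the roots of $\tilde b_f(s)$, all of which are $\le-\tilde\alpha_f$ by definition of $\tilde\alpha_f$. A short bookkeeping argument then shows that the only candidate poles of $Z_f$ with real part $>-\tilde\alpha_f$ are negative integers $-j$ with $0<j<\tilde\alpha_f$, that each such $-j$ occurs with multiplicity exactly $1$, and hence (dividing by $\Gamma(s+1)$, which has a simple pole there) that $\tilde Z_f$ is holomorphic on $\Re s>-\tilde\alpha_f$. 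Thus it suffices to prove that $\tilde Z_f$ has a pole at $s=-\tilde\alpha_f$ of order exactly $m$, the multiplicity of $-\tilde\alpha_f$ as a root of $\tilde b_f$.

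\textbf{Hodge-theoretic model.} Working locally near a general point of $D$ and gluing by a partition of unity, pass to the graph embedding $\iota_f\colon\C^n\hookrightarrow\C^n\times\C_t$ and the pure Hodge module with underlying $\sD$-module $\cB_f=(\iota_f)_+\cO_{\C^n}$, equipped with its Hodge filtration, polarization $S$, and Kashiwara--Malgrange $V$-filtration along $t=0$. For $\Re s\gg 0$ the functional equation $b_f(s)f^s=P(s)f^{s+1}$ together with integration by parts continues $Z_f(\varphi;s)$ across the strip $\Re s>-\tilde\alpha_f-1$, and the poles in that strip are read off from the behaviour as $t\to 0$ of the pushforward to $\C_t$ of the current $\varphi\,d\mu\wedge d\bar\mu$ along the graph of $f$. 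This pushforward expands in terms of type $|t|^{2(\alpha-1)}(\log|t|^2)^k$, whose exponents $\alpha$ are the $V$-exponents of $\cB_f$, whose log-orders $k$ are bounded by the Jordan block sizes of the monodromy on $\gr^V_\alpha$, and whose coefficients are pairings assembled from the Hodge filtration and $S$ on the nearby cycle module $\psi_f$. An elementary one-variable computation (the Mellin integral $\int_0^1 u^{\beta-1}(\log u)^k\,du$) converts such a term into a pole of $Z_f$ at $s=-\alpha$ of order $k+1$. Since $\tilde\alpha_f$ is the \emph{minimal} $V$-exponent, $s=-\tilde\alpha_f$ is the first pole met, its order is $1$ plus the largest log-order occurring at $\alpha=\tilde\alpha_f$ (which is $m-1$, since by definition of $m$ the nilpotent $N$ satisfies $N^{m}=0\ne N^{m-1}$ on the relevant part), and Saito's characterization of the minimal exponent identifies the leading coefficient, up to a nonzero constant, with the Hermitian form $u\mapsto S(N^{m-1}u,\bar u)$ on the bottom nonzero piece $F_{p_{\min}}$ of the Hodge filtration on the corresponding primitive part of $\psi_f$, where $N$ is the logarithm of the unipotent part of the monodromy.

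\textbf{Positivity and conclusion.} By the positivity property of the polarization on the lowest piece of the Hodge filtration of a polarized complex Hodge module in the sense of Sabbah--Schnell, the form $u\mapsto S(N^{m-1}u,\bar u)$ is definite, hence nonzero, on $F_{p_{\min}}$ of that primitive part. Choosing $\varphi\ge 0$ supported near a general point of $D$ at which a witnessing section is nonzero makes the corresponding Laurent coefficient of $\tilde Z_f$ at $s=-\tilde\alpha_f$ nonzero: when $\tilde\alpha_f\notin\Z_{>0}$ the factor $\Gamma(s+1)$ is holomorphic and nonvanishing there, so this is literally the coefficient of $(s+\tilde\alpha_f)^{-m}$ for $Z_f$; when $\tilde\alpha_f\in\Z_{>0}$ the simple pole of $\Gamma(s+1)$ absorbs exactly the one extra ``trivial'' order carried by the ambient smooth measure. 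Hence $-\tilde\alpha_f$ is a pole of $\tilde Z_f$ of order exactly $m$, and by the reductions it is the largest one.

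\textbf{Main obstacle.} The heart of the argument is the dictionary in the second step: translating the analytic meromorphic continuation of $Z_f$ into Hodge data on $\psi_f$ with the \emph{correct} pole order and leading coefficient. A crude use of the functional equation only yields the order bound $2m$ (and worse once the shifts $b_f(s+i)$ are included), and extracting the sharp order $m$ together with the identification of the leading coefficient with $S(N^{m-1}(-),\overline{(-)})$ on the bottom Hodge piece requires carefully controlling the compatibility of the Hodge filtration with the $V$-filtration and with pushforward --- that is, strictness of the Hodge filtration and Saito's formula for the Hodge filtration on nearby cycles. Once the coefficient is correctly identified, the positivity input is the short, decisive step that rules out cancellation.
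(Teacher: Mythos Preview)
Your overall strategy is the paper's: identify the leading Laurent coefficient of $\tilde Z_f$ at $-\tilde\alpha_f$ with a polarization pairing on the lowest Hodge piece of a nearby/vanishing cycles Hodge--Lefschetz module, then invoke positivity (Theorem~\ref{thm:intro lowest piece is positive}) to rule out cancellation. Your reduction paragraph is correct.

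The gap is in the ``Hodge-theoretic model'' paragraph, which asserts the key identification without supplying a mechanism, and contains the incorrect claim that ``$\tilde\alpha_f$ is the minimal $V$-exponent.'' When $\tilde\alpha_f>1$ this is false: the modules $\gr_V^\alpha\iota_+\cO_X$ are nonzero periodically in $\alpha$, and $\tilde\alpha_f$ is characterized not by a $V$-jump but Hodge-theoretically as the minimum of $j+\beta$ over $(j,\beta)$ with $F_{j+1}\gr_V^\beta\iota_+\cO_X\neq 0$. The paper implements the shift you are missing explicitly. Write $\tilde\alpha_f=k+\alpha$ with $k\in\Z_{\ge0}$, $\alpha\in[0,1)$, and set $w=1\otimes\partial_t^k$. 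One must prove the exact formula $b_w(s)=(s+1)\,\tilde b_f(s-k)$ (Proposition~\ref{prop:sharp}; a real computation, not bookkeeping), which gives $[w]\neq 0$ in $F_{k+1}\gr_V^\alpha\iota_+\cO_X$ with $F_k\gr_V^\alpha=0$ and $\mult_{s=-\alpha}b_w(s)=\mult_{s=-\tilde\alpha_f}\tilde b_f(s)$. Then, rather than an asymptotic expansion, the paper uses that the Sabbah--Schnell polarization on $\gr_V^\alpha$ is \emph{defined} as a residue, and that this residue agrees with $Z_{w,w}$ (Lemma~\ref{lemma: local Archimedean zeta function and Mellin transform}); for $\alpha>0$ this yields directly
\[
\Res_{s=-\alpha}(s+\alpha)^\ell Z_w(\varphi;s)=\langle\varphi,\gr_V^\alpha(S)([w],\op{N}^\ell[w])\rangle,
\]
to which Corollary~\ref{cor:HL lowest piece is positive} applies. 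One then transfers to $Z_f$ via the elementary identity $Z_w(\varphi;s)=\prod_{i=0}^{k-1}(-s+i)^2\,Z_f(\varphi;s-k)$.

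Your treatment of the integer case $\tilde\alpha_f\in\Z$ (``$\Gamma(s+1)$ absorbs the extra trivial order'') does not work as stated. The difficulty is that for $\alpha=0$ the polarization $\gr_V^0(S)$ on vanishing cycles is \emph{not} given by the residue formula, so positivity on $\gr_V^0$ does not directly compute a Laurent coefficient of $Z_f$. The paper sidesteps this by descending one level: it takes $w'=1\otimes\partial_t^{k-1}\in V^1$ with $\partial_t[w']=[w]$, uses the adjunction $\gr_V^0(S)(m,\op{can}\,m')=-\gr_V^1(S)(\op{var}\,m,m')$ (Proposition~\ref{prop: relation between polarization grVo and grV1}) to convert positivity on $\gr_V^0$ into a nonvanishing residue of $Z_{w'}$ at $s=-1$, and then transfers to $Z_f$ at $s=-k$ as above. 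Without this $\op{can}/\op{var}$ step the argument does not close when $\tilde\alpha_f\in\Z$.
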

\noindent If $f$ has only isolated singularities, it was shown by Loeser \cite[Th\'eor\`eme 1.8]{Loeser} that $-\tilde{\alpha}_f$ is a pole of $\tilde{Z}_f$; but the pole order part is new 
in this setting. By Koll\'ar \cite{kollar} and Lichtin \cite{lichtin}, the \emph{log canonical threshold} $\lct(f)$ is $\min\{\tilde{\alpha}_f,1\}$, whose negative is the largest pole of $Z_f$ \cite[Theorem 10.6]{kollar}. In particular, Theorem \ref{thm: Loesers conjecture} recovers a result of Adiceam--Marmon \cite[Theorem 1.3]{AdiceamMarmon} that the order of $-\lct(f)$ as a pole of $Z_f$ is equal to $\textrm{mult}_{s=-\mathrm{lct}(f)}b_f(s)$. We note that in the lct case, this can be deduced from positivity of the convergent integral $Z_f(\varphi;s)$ near $s = -\lct(f)$ for certain $\varphi$, while for $\tilde{\alpha}_f$ we must appeal to Hodge theory to prove the analogous positivity properties (see below).

In some cases, Theorem \ref{thm: Loesers conjecture} can be used effectively to determine $\tilde{\alpha}_f$ without computing  $b_f(s)$, see Example \ref{example: Whitney umbrella} for an illustration.  Example \ref{example: Denef Sargos conjecture} provides an application of Theorem \ref{thm: Loesers conjecture} to a conjecture of Denef--Sargos \cite[Conjecture 4]{DS92}.

\smallskip

For a general root $s_0$ of $b_f(s)$, it was shown by Barlet \cite{Barlet84} that there exists $m\in \Z_{\geq 0}$ such that $s_0-m$ is a pole of $Z_f$. An effective control of $m$ is given in Theorem \ref{thm: producing poles}. It was asked by Loeser \cite[p87]{Loeser} whether $s_0$ itself is always a pole of $Z_f$. Our next result answers this in the negative. 
\begin{thm}\label{thm: counterexample to Loeser}
Let $\alpha\in (0,1)$. If $-\alpha$ is a simple root of $b_f(s)$ and $f^{-\alpha}\in \Gamma(X,\sD_{X} \cdot f^{-\alpha+1})$, then $-\alpha$ is not a pole of $Z_f$. 
\end{thm}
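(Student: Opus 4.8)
The plan is to combine a soft bound on the pole order, which forces the pole at $-\alpha$ to be simple, with a Hodge-theoretic formula for the residue showing that it reads off only the class of $f^{-\alpha}$ in $\sD_{\C^n}f^{-\alpha}/\sD_{\C^n}f^{-\alpha+1}$. \emph{Step 1 (simple pole).} By the bound recalled in the introduction, the pole order of $Z_f$ at $s_0=-\alpha$ is at most the multiplicity of $-\alpha$ as a root of $\prod_{i=0}^r b_f(s+i)$ for $r\gg 0$. Since $\alpha\in(0,1)$, for every $i\geq 1$ the number $-\alpha+i$ is positive, hence not a root of $b_f$, while $-\alpha$ itself is a simple root by hypothesis; so this multiplicity equals $1$ and $Z_f(\varphi;\cdot)$ has at most a simple pole at $-\alpha$ for every test function $\varphi$. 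It therefore suffices to prove that the residue distribution $R\colon\varphi\mapsto\Res_{s=-\alpha}Z_f(\varphi;s)$ vanishes identically.

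\emph{Step 2 (the residue as a period pairing).} The heart of the argument is the analytic/Hodge-theoretic description of poles of $Z_f$ established in the paper (Theorem~\ref{thm: producing poles}, sharpening Barlet). Via the graph embedding of $f$ one has the mixed Hodge module underlying the nearby cycles $\psi_f$, equipped with the Kashiwara--Malgrange $V$-filtration and the polarization in the sense of Sabbah--Schnell; the multivalued function $f^{-\alpha}$ determines a canonical class $[f^{-\alpha}]$ in the $e^{-2\pi i\alpha}$-part of $\psi_f$, equivalently in $\sD_{\C^n}f^{-\alpha}/\sD_{\C^n}f^{-\alpha+1}\cong\gr_V^\alpha$, lying in the lowest piece of the Hodge filtration. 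For a simple pole --- which is our case by Step 1 --- the leading Laurent coefficient of $Z_f$ at $-\alpha$ involves no power of the nilpotent operator $N$, and $R$ is, up to a nonzero constant, the sesquilinear period pairing on this space evaluated on $[f^{-\alpha}]$ against its conjugate, regarded as a distribution in $\varphi$. In particular $R$ depends on $f^{-\alpha}$ only through the class $[f^{-\alpha}]$, using that $f^{-\alpha+1}=f\cdot f^{-\alpha}$ sits one step deeper in the $V$-filtration and so contributes nothing to $\gr_V^\alpha$ (a standard $V$-filtration computation, compatible with Theorem~\ref{thm: analytic charaterization of Vfiltration}).

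\emph{Step 3 (conclusion).} The hypothesis $f^{-\alpha}\in\sD_{\C^n}\cdot f^{-\alpha+1}$ says precisely that $[f^{-\alpha}]=0$ in $\sD_{\C^n}f^{-\alpha}/\sD_{\C^n}f^{-\alpha+1}$. Hence $R\equiv 0$ by Step 2, and then $-\alpha$ is not a pole of $Z_f$ by Step 1.

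The real obstacle is Step 2: making the residue formula precise and canonical --- namely, that at a simple root the leading Laurent coefficient of $Z_f$ at $-\alpha$ is exactly the polarization pairing evaluated on the class $[f^{-\alpha}]$ and its conjugate --- within the polarized complex Hodge module framework, and matching the hypothesis ``$f^{-\alpha}\in\sD_{\C^n}f^{-\alpha+1}$'', which is imposed at the single value $s=-\alpha$ rather than generically in $s$, with the vanishing of the specific class that appears there. One should track that $\alpha<1$ is used both in Step 1 (pole order) and in Step 2 (so that $f^{-\alpha+1}$ genuinely lies one full step deeper in $V$ and the ``shifted'' contributions are regular at $-\alpha$). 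Granting the residue identification, the conclusion is immediate.
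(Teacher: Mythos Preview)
Your Step 1 is fine and matches the paper. The gap is in Step 2, and it is exactly the one you flag at the end --- but it is not a technicality that can be ``granted''; it is the whole content of the proof.

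The residue formula (equation \eqref{eqn: definition of polarization on grValpha} in the paper) gives
\[
\Res_{s=-\alpha} Z_f(\varphi;s) \;=\; \langle \varphi,\ \gr_V^\alpha(S)([f^s],[f^s])\rangle,
\]
where $[f^s]$ is the class of $f^s=1\otimes 1$ in $\gr_V^\alpha\iota_+\cO_X$. This class is \emph{nonzero}: by Sabbah's theorem, $f^s\in V^{>\alpha}\iota_+\cO_X$ would force $b_f(-\alpha)\neq 0$, contrary to hypothesis. So you cannot conclude that the pairing vanishes just because some class vanishes. Your assertion that $[f^s]$ may be replaced by the \emph{evaluated} class $[f^{-\alpha}]\in \sD_X f^{-\alpha}/\sD_X f^{-\alpha+1}$ is precisely what is at stake: the hypothesis $f^{-\alpha}\in\sD_X f^{-\alpha+1}$ is a condition at the single parameter value $s=-\alpha$, whereas membership in $V^{>\alpha}$ is a condition on $f^s$ generically in $s$, and these do not coincide. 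You have not produced any map through which the residue pairing factors, nor any argument that $\gr_V^\alpha(S)([f^s],[f^s])=0$ follows from the evaluated hypothesis. The claimed isomorphism $\sD_X f^{-\alpha}/\sD_X f^{-\alpha+1}\cong \gr_V^\alpha$ is neither stated nor proved in the paper and would itself require the kind of surjectivity statement that drives the actual argument.

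The paper's proof is different in structure and does not attempt to identify the residue with a pairing on evaluated classes. Instead, it first proves (Proposition \ref{prop: V>alpha surjects}) that the evaluation map $V^{>\alpha}\iota_+(\cO_X)_f \to j_{!*}(\cO_U f^{-\alpha})=\sD_X f^{-\alpha+1}$ is surjective. The hypothesis then produces $w\in V^{>\alpha}\iota_+(\cO_X)_f$ with $\mathrm{ev}_{s=-\alpha}(w)=f^{-\alpha}$, so $f^s = w - (s+\alpha)w'$ for some $w'$. Substituting into one slot of the sesquilinear zeta function gives
\[
Z_f \;=\; Z_{f^s,f^s} \;=\; Z_{w,f^s} \;-\; (s+\alpha)\,Z_{w',f^s}.
\]
The first term has no pole at $-\alpha$ because $w\in V^{>\alpha}$; the second has pole order at most $\mult_{s=-\alpha}b_f(s)-1$ by applying Bernstein's bound to the \emph{conjugate} slot $\overline{f^s}$. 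This asymmetric decomposition is the missing idea: it bypasses the question of whether the diagonal pairing $\gr_V^\alpha(S)([f^s],[f^s])$ can be computed from evaluated data, by instead replacing only one argument with an element of $V^{>\alpha}$.
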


\noindent In \cite[Example 4.2]{saitopower}, Saito gives an example of $f$ on $\C^3$ with an isolated singularity and $\alpha = 13/30$ so that the assumption above is satisfied, in order to give a negative answer to a question of Budur and Walther on whether $b_f(-\alpha)=0$ implies $\sD_{X}\cdot f^{-\alpha}\neq \sD_{X}\cdot f^{-\alpha+1}$. Despite many explicit calculations in the literature (e.g., \cite[Th\'eor\`eme 1.9]{Loeser}, \cite[Theorem 6.3.1]{igusa} and \cite{blanco}), to our knowledge Theorem \ref{thm: counterexample to Loeser} gives the \emph{first} example of a root that is not a pole and shows that the shifts are necessary.

As a byproduct, using Theorem \ref{thm: Loesers conjecture} and an enhancement of Theorem \ref{thm: counterexample to Loeser} (see Proposition \ref{prop: an estimate for pole order}), we show that the question of Budur--Walther has a positive answer for $\tilde{\alpha}_f$.
\begin{corollary}\label{cor: Budur Walther question for minimal exponent}
    We have $\sD_{X}\cdot f^{-\tilde{\alpha}_f}\neq \sD_{X} \cdot f^{-\tilde{\alpha}_f+1}$.
\end{corollary}

Theorem \ref{thm: Loesers conjecture} answers a question of Musta\c{t}\u{a}--Popa \cite[Page 20]{Mustatazagtalk} in the strong sense, and gives an analytic characterization of the minimal exponent $\tilde{\alpha}_f$ via $\tilde{Z}_f$. The proof also shows that the maximum pole order is detected by $\tilde{Z}_f(\varphi; s)$ for any non-negative test function $\varphi$ whose support intersects
\begin{equation}\label{eqn: locus of maximal pole order} \{ x\in X \mid \mult_{s=-\tilde{\alpha}_f} b_{f,x}(s)=\mult_{s=-\tilde{\alpha}_f} b_{f}(s)\},\end{equation}
where $b_{f,x}(s)$ is the local Bernstein--Sato polynomial of $f$ at $x$; \eqref{eqn: locus of maximal pole order} agrees with the \emph{center of minimal exponent} defined in \cite{SY23}, by Corollary \ref{corollary: determine the center of minimal exponent}. If $f$ is a polynomial, then $Z_f$ extends to a tempered distribution \cite[Theorem 5.3.2]{igusa}, and the maximal pole order  is detected by any everywhere positive rapidly decreasing function, e.g. $\varphi(x)=\exp(-|x|^2)$.

\smallskip
We remark that analogous questions for functions on $\R^n$, which was the original setting of Gelfand's problem \cite{gelfand}, can fail already for the log canonical threshold, see \cite{saitoreal}.

\subsection{Complex polarized Hodge modules}
For a holomorphic function $f$ on an arbitrary complex manifold $X$, one can define $b_f(s)$ and $Z_f$ (resp. $\tilde{Z}_f$) exactly as for $\mb{C}^n$ (see (\ref{eq: original zeta})),
and Theorem \ref{thm: Loesers conjecture} extends verbatim. We work in this generality from now on.  Our proof of Theorem \ref{thm: Loesers conjecture} starts from the idea of Loeser \cite{Loeser}, in the case of isolated singularities, that one can use the polarization of the mixed Hodge structure on the Milnor fiber of $f$ to study $Z_f$. However, this strategy does not seem to work for non-isolated singularities, as many inputs there rely crucially on the isolatedness assumption: Varchenko's theory of asymptotic mixed Hodge theory, the compactification of the Milnor fiber of isolated singularities of J. Scherk, Barlet's pairing, Deligne's splitting of mixed Hodge structures, to name a few. 

We overcome the difficulty by instead relating $Z_f$ to the theory of (polarized) complex Hodge modules of Sabbah and Schnell \cite{MHMproject}; the necessary elements of this theory are recalled in \S\ref{sec:MHM}. It is known (see e.g., \cite{SY23}) that the minimal exponent can be detected by the lowest piece of the Hodge filtration on the Hodge module of vanishing cycles for the function $f$. Moreover, we observe that the order and residue (of the highest order) of the associated pole of $Z_f$ at $-\tilde{\alpha}_f$ are controlled by the weight filtration and polarization on the vanishing cycles, respectively (see \S\ref{sec:zeta vs polarization} and Theorem \ref{thm: sheaf theoretic loeser}).

The key ingredient is the following fundamental fact about polarized Hodge modules.

\begin{thm}[Theorem \ref{thm:lowest piece is positive}] \label{thm:intro lowest piece is positive}
Let $\mc{M}$ be a complex polarized Hodge module on a complex manifold $X$. Then the integral of the polarization is either positive or negative definite on the lowest piece of the Hodge filtration in $\mc{M}$.
\end{thm}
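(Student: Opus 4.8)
The plan is to argue by induction on $d=\dim\supp\mc{M}$, reducing at each stage to the positivity axiom of a polarized Hodge structure, which also settles the base case $d=0$. For $d=0$, $\mc{M}$ is a finite direct sum of polarized Hodge structures sitting at points of $X$, all of one weight $w$ by purity; its lowest Hodge piece $F_{p_0}\mc{M}$ (where $p_0$ is the least $p$ with $F_p\mc{M}\neq 0$) is the sum of the bottom Hodge spaces of the summands realizing $p_0$, and the Hodge--Riemann bilinear relations say exactly that $S$ is definite there, with a sign determined by $w$, $p_0$ and $\dim X$ alone and hence common to all those summands. For $d>0$, I would first reduce to the case that $\mc{M}$ has strict support an irreducible analytic set $Z$ with $\dim Z=d$: the decomposition by strict support is orthogonal for the polarization, the lowest piece of $\mc{M}$ is the sum of the lowest pieces of the summands attaining the global minimal index $p_0$, and on each such summand $S$ is definite with the same sign, the sign being governed (as one checks at a generic point of the support, using the standard conventions for polarizations under shift and pushforward) by the same universal numerical formula. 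So one may assume $\mc{M}=j_{!*}\mb{V}$ is the intermediate extension, along an open immersion $j\colon U\hookrightarrow X$, of a polarized variation of Hodge structure $\mb{V}$ defined on a Zariski-dense open $U$ in the smooth locus of $Z$.

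Over $U$ the assertion is the fibrewise Hodge--Riemann relation: the lowest Hodge bundle $F_{p_0}\mb{V}$ carries the positive definite Hodge metric $h$, so $S(m,\bar m)$ is, up to a fixed root of unity, $h(m,m)$ times a volume form, hence a strictly positive density on $U$ away from the zero locus of $m$. The whole content of the theorem is therefore the behaviour across $Z\setminus U$ and $\Sing Z$: one must show that integrating $S$ on $F_{p_0}\mc{M}\otimes\overline{F_{p_0}\mc{M}}$ against a nonnegative test form acquires no indefinite contribution concentrated on the lower-dimensional strata. This comes down to a regularity statement: for a local section $m$ of $F_{p_0}\mc{M}$, the current $S(m,\bar m)$ is the direct image under $Z\hookrightarrow X$ of a locally integrable density on $Z$, smooth and nonnegative on $U$; equivalently, $S(m,\bar m)$ carries no extra mass on any proper analytic subset of $Z$. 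Granting this, a locally integrable density that is $\ge 0$ on the dense open $U$ is $\ge 0$ almost everywhere, and, as $F_{p_0}\mc{M}$ is torsion-free along $Z$, it is strictly positive on a dense open whenever $m\neq 0$; hence, up to the fixed root of unity, $\int_X S(m,\bar m)\,\varphi>0$ for every nonzero $m$ and every nonnegative $\varphi$ whose support meets $Z$, which is the claimed definiteness.

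The heart of the matter is thus the local integrability of $S$ on the lowest piece, and this is where the inductive hypothesis enters. Working locally on $Z$, choose a holomorphic function $g$ whose zero locus meets $Z$ properly and contains both $\Sing Z$ and $Z\setminus U$. The structure of $F_{p_0}\mc{M}$ near $\{g=0\}$ is controlled by the Kashiwara--Malgrange $V$-filtration along $g$: its graded pieces are lowest Hodge pieces of the pure polarizable Hodge modules occurring as the primitive parts of the nearby and vanishing cycles $\psi_{g,\lambda}\mc{M}$ and $\phi_{g,1}\mc{M}$, which are supported on $Z\cap\{g=0\}$ of dimension $<d$ and hence fall under the inductive hypothesis, the residue of $S$ along $g$ inducing their polarizations. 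The definiteness, hence finiteness of mass, of these induced forms, together with the explicit shape of the $V$-filtration, bounds the growth of $S(m,\bar m)$ as $g\to 0$ to be at worst of logarithmic type, in particular locally integrable; covering a neighbourhood of the bad locus (of codimension $\ge 1$ in $Z$) by finitely many such $g$ propagates the bound to all of $X$. The main obstacle is precisely this last point: correctly matching the lowest Hodge piece of $\mc{M}$ and the residue of $S$ on it against the lowest pieces and induced polarizations of the nearby and vanishing cycles --- that is, controlling the interaction of the $V$-filtration with the bottom of the Hodge filtration --- entirely within the abstract formalism of complex Hodge modules of Sabbah--Schnell and without recourse to transcendental norm estimates. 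Once that dictionary is in place, the positivity propagates from the base case by formal manipulations.
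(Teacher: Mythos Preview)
Your strategy differs substantially from the paper's, and the step you yourself flag as ``the main obstacle'' is indeed a genuine gap that you have not closed. You want to deduce local integrability of $S(m,\bar m)$ near the bad locus from the inductive hypothesis applied to nearby and vanishing cycles. But the induced polarizations on $\psi_{g,\lambda}\mc{M}$ and $\phi_{g,1}\mc{M}$ are, by construction, \emph{residues} of a Mellin transform of $S$ along $g$ (see the paper's \S4.1): knowing these residues are finite and definite tells you about the leading singular coefficient of $S(m,\bar m)$ at $\{g=0\}$, not about the size of $S(m,\bar m)$ itself. Turning residue information back into a pointwise or $L^1$ bound of ``at worst logarithmic type'' is exactly the kind of transcendental norm estimate (Schmid, Cattani--Kaplan--Schmid) you say you wish to avoid; it is not a formal manipulation. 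Without this, the induction does not close.

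The paper avoids induction entirely and gives a direct argument. After the same reduction to strict support and writing $\mc{M}=j_{!*}\mc{V}$ with $U=(X\setminus D)\cap Z$ for $D=f^{-1}(0)$, the key observation is that the lowest piece satisfies $F_{p+1}\iota_+\mc{M}\subseteq V^{>0}\iota_+\mc{M}$ (from Saito's formula $F_{p+1}\iota_+\mc{M}=V^{>0}\cap j'_*F_{p+1}$ for intermediate extensions). By Sabbah's theorem, the $b$-function $b_m(s)$ then has no roots $\geq 0$, so Bernstein's argument shows the zeta function $Z_{m,m}(s)=\int_U |f|^{2s}\varphi_U$ has no poles with $\Re s\geq 0$. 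One then identifies $S(m,m)=Z_{m,m}(0)$ using Kashiwara's regularity theorem, and an elementary lemma on monotone integrals (Lemma~\ref{lem:convergence}) converts ``no poles for $\Re s\geq 0$'' into honest convergence of $\int_U\varphi_U$. The sign comes from the fibrewise Hodge--Riemann relation on $\mc{V}$, exactly as you describe over $U$. Thus the integrability you were trying to prove by induction is obtained in one stroke from the interaction of the Hodge and $V$-filtrations on the lowest piece.
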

A special case of this theorem has been proved in \cite[Proposition 4.7]{DV22}.

As a historical aside, we remark that the theory of polarizations on complex Hodge modules, which follows \cite{Sabbah05}, itself builds on ideas of Barlet and Maire \cite{Barletmellintransform} on poles of zeta functions, see \cite[Page 7]{Sabbah05}.

The methods behind Theorem \ref{thm: Loesers conjecture} also provide information about the other poles of $Z_f$, also extracted from the Hodge filtration on the vanishing cycles. For a holomorphic function $f$ on a complex manifold $X$, we consider the left $\sD$-module $\gr_V^\alpha \iota_+\mc{O}_X$ with $\alpha \in [0, 1)$, where $\iota_+\mc{O}_X = \bigoplus_{i \geq 0} \mc{O}_X \otimes \partial_t^i$ is the direct image of $\mc{O}_X$ under the graph embedding $\iota : X \to X \times \mb{C}$, and $V^\bullet$ denotes the $V$-filtration of Kashiwara--Malgrange along the coordinate $t$ on $\mb{C}$. By a theorem of Kashiwara \cite{Kas83}, the $\sD$-module $\bigoplus_{\alpha \in [0,1)}\gr_V^\alpha \iota_+\mc{O}_X$ maps to the perverse sheaf of vanishing cycles under the Riemann-Hilbert correspondence. Furthermore, Malgrange \cite{Malgrange} showed that if $-\beta$ is a root of $b_f(s)$, then $\gr^{\beta}_V\iota_{+}\cO_X\neq 0$.
These $\sD$-modules are endowed naturally with a \emph{Hodge filtration} $F_\bullet$ defined by\footnote{Following \cite{MHMproject}, our convention is that the Hodge filtration on $\iota_{+}\cO_X$ starts at degree 1, see \eqref{eqn: convention of Hodge filtration on the embedding}.}
\[ F_{k+1}\iota_{+}\cO_X=\sum_{0\leq i\leq k} \cO_X\otimes \d_t^{i}, \quad F_{k}\gr^{\alpha}_V\iota_{+}\cO_X=\frac{F_k\iota_{+}\cO_X\cap V^{\alpha}\iota_{+}\cO_X}{F_k\iota_{+}\cO_X\cap V^{>\alpha}\iota_{+}\cO_X}.\]

\begin{thm} \label{thm: producing poles}
Let $\alpha \in [0, 1)$ such that $b_f(s)$ has a root in $-\alpha+\Z_{<0}$ and let $k$ be the smallest integer with $F_{k+1}\gr^{\alpha}_V\iota_{+}\cO_X\neq 0$. Then $- \alpha - k$ is a pole of $\tilde{Z}_f$ with order at least
\[ \min \{ r\in \Z_{\geq 1}\mid (s+\alpha)^r\cdot F_{k+1}\gr^{\alpha}_V\iota_{+}\cO_X=0\}.\]
If $\alpha + k  < \tilde{\alpha}_f  + 1$, then $-\alpha - k$ is also a root of $\tilde{b}_f(s)$.
\end{thm}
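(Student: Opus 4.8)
The plan is to adapt the proof of Theorem~\ref{thm: Loesers conjecture}, of which this statement is essentially the generalization obtained by replacing the lowest piece of the Hodge filtration on $\gr^{\tilde{\alpha}_f}_V\iota_{+}\cO_X$ by the lowest nonzero piece on $\gr^{\alpha}_V\iota_{+}\cO_X$ for a general admissible $\alpha$. The main input is the dictionary developed in \S\ref{sec:zeta vs polarization} (Theorem~\ref{thm: sheaf theoretic loeser}) between the principal part of $\tilde{Z}_f(\varphi;s)$ near a point $-\alpha-j$, $\alpha\in[0,1)$, $j\in\Z_{\geq0}$, and the hermitian pairing obtained by integrating the polarization on $\gr^{\alpha}_V\iota_{+}\cO_X$: the singular part of $\tilde{Z}_f(\varphi;\cdot)$ at $-\alpha-j$ is controlled by a pairing of a class $[\varphi]_j$ built from $\varphi$ (by restriction and $\bar\d$-differentiation along $f$) and lying in the Hodge level $F_{j+1}\gr^{\alpha}_V\iota_{+}\cO_X/F_{j}\gr^{\alpha}_V\iota_{+}\cO_X$, against its images under powers of the nilpotent operator $s+\alpha$ on $\gr^{\alpha}_V\iota_{+}\cO_X$. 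First I would note that the hypothesis that $b_f(s)$ has a root in $-\alpha+\Z_{<0}$ forces $\gr^{\alpha}_V\iota_{+}\cO_X\neq 0$: the eigenvalue $e^{-2\pi i\alpha}$ then occurs in the monodromy, so the corresponding summand of the vanishing cycles is nonzero (in particular $\alpha\neq0$, as $\gr^{0}_V\iota_{+}\cO_X=0$ since $0$ is not a root of $b_f$), and thus $k$ is well-defined; by its minimality $F_{k}\gr^{\alpha}_V\iota_{+}\cO_X=0$, so $F_{k+1}\gr^{\alpha}_V\iota_{+}\cO_X$ is the lowest nonzero piece of the Hodge filtration on $\gr^{\alpha}_V\iota_{+}\cO_X$, and this is the piece responsible for the pole at $-\alpha-k$.

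For the order, set $r=\min\{r\in\Z_{\geq1}\mid (s+\alpha)^{r}\cdot F_{k+1}\gr^{\alpha}_V\iota_{+}\cO_X=0\}$ and pick $v\in F_{k+1}\gr^{\alpha}_V\iota_{+}\cO_X$ with $(s+\alpha)^{r-1}v\neq 0$. By the dictionary it suffices to produce a nonnegative test function $\varphi$ for which the coefficient of $(s+\alpha+k)^{-r}$ in the Laurent expansion of $\tilde{Z}_f(\varphi;s)$ at $-\alpha-k$ is nonzero; as a hermitian form on $F_{k+1}\gr^{\alpha}_V\iota_{+}\cO_X$, this coefficient is, up to a nonzero universal constant, the form sending $u$ to the integral of the polarization of $u$ against $\overline{(s+\alpha)^{r-1}u}$. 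To see that it is not identically zero I would pass to the associated graded of the monodromy weight filtration $W_{\bullet}$: by strictness of $F_{\bullet}$ with respect to $W_{\bullet}$ together with $F_{k}\gr^{\alpha}_V\iota_{+}\cO_X=0$, the image of $v$ in $\gr^{W}$ lies in the lowest piece of the Hodge filtration of the relevant pure Hodge module $\gr^{W}_{j}\gr^{\alpha}_V\iota_{+}\cO_X$, and — using the relative monodromy filtration and the primitive decomposition — in a primitive summand $P$ on which $(s+\alpha)^{r-1}$ together with the polarization of $\gr^{\alpha}_V\iota_{+}\cO_X$ induces, up to sign, a polarization $S_P$. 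The crucial input, Theorem~\ref{thm:intro lowest piece is positive} (precisely, Theorem~\ref{thm:lowest piece is positive}), now applies: the integral of $S_P$ is positive or negative definite on the lowest Hodge piece of $(P,S_P)$, hence it does not vanish on the nonzero class of $v$. Transporting this back to $\gr^{\alpha}_V\iota_{+}\cO_X$, the hermitian form above is nonzero, so a test function $\varphi\geq0$ whose class $[\varphi]_k$ pairs nontrivially with $v$ exhibits a pole of order at least $r$ at $-\alpha-k$. I expect this to be the main obstacle: it requires pinning down the top Laurent coefficient precisely enough to recognize it as a definite polarization form on the correct primitive summand of $\gr^{W}\gr^{\alpha}_V\iota_{+}\cO_X$, while tracking the shifts in $F_{\bullet}$, $W_{\bullet}$ and the Tate twist hidden in $s+\alpha$; it is, however, a routine variant of the corresponding step in the proof of Theorem~\ref{thm: Loesers conjecture}, where $\alpha+k=\tilde{\alpha}_f$ and $r$ equals the multiplicity of $-\tilde{\alpha}_f$ as a root of $\tilde{b}_f(s)$. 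The conclusion is only a lower bound because the pole at $-\alpha-k$ may receive further contributions from deeper Hodge levels and from the complex conjugate side of $|f|^{2s}$, which this argument does not control.

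Finally, assume $\alpha+k<\tilde{\alpha}_f+1$. By the part just sketched, $-\alpha-k$ is a pole of $\tilde{Z}_f$, and since $\alpha\in(0,1)$ the number $-\alpha-k$ is not a negative integer, so $\Gamma(s+1)$ is finite and nonzero there and hence $-\alpha-k$ is a pole of $Z_f$ as well. By Bernstein's argument recalled in the introduction, $-\alpha-k+i_0$ is then a root of $b_f(s)$ for some $i_0\in\Z_{\geq0}$. Every root $-\gamma$ of $b_f(s)$ with $\gamma\neq 1$ is a root of $\tilde{b}_f(s)=b_f(s)/(s+1)$, hence satisfies $\gamma\geq\tilde{\alpha}_f$ by the definition of $\tilde{\alpha}_f$; since $-\alpha-k+i_0$ is not an integer it is $\neq -1$, so $\alpha+k-i_0\geq\tilde{\alpha}_f$. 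If $i_0\geq1$ this would force $\alpha+k\geq\tilde{\alpha}_f+1$, contradicting the hypothesis; therefore $i_0=0$, so $-\alpha-k$ is itself a root of $b_f(s)$, and being $\neq-1$ it is a root of $\tilde{b}_f(s)$, as claimed.
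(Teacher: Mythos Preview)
There are two genuine gaps.

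First, your dismissal of $\alpha=0$ is incorrect. The claim ``$\gr^{0}_V\iota_{+}\cO_X=0$ since $0$ is not a root of $b_f$'' is false: $\gr^{0}_V\iota_{+}\cO_X=\phi_{f,1}\cO_X$ is the unipotent vanishing cycles module, which is nonzero whenever $f$ has a singular fiber (e.g.\ $f=x^2+y^2$; cf.\ Remark~\ref{remark: Prop 1.5 is false for OX}). The hypothesis ``$b_f$ has a root in $-\alpha+\Z_{<0}$'' with $\alpha=0$ is always satisfied (since $b_f(-1)=0$), so this case must be handled. The paper treats it separately: one cannot argue directly on $\gr^0_V$ because the polarization there is not given by a residue of $Z_w$ at $s=0$. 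Instead one uses strict surjectivity of $\op{can}=-\partial_t:(\gr^1_V,F_\bullet)\to(\gr^0_V,F_{\bullet+1})$ to lift to $w'\in F_kV^1\iota_+\cO_X$, and then Theorem~\ref{thm: sheaf theoretic loeser}(2) produces a pole of $Z_{w'}$ at $s=-1$ of order $\mult_{s=0}b_w(s)+1$, the extra $+1$ being exactly what is needed to survive the division by $\Gamma(s+1)$. Your final paragraph also breaks at $\alpha=0$: you use ``$-\alpha-k$ is not a negative integer'' to pass from $\tilde Z_f$ to $Z_f$ and to exclude the root $-1$. The paper avoids this by observing directly that poles of $\tilde Z_f$ lie among $\{\text{roots of }\tilde b_f\}-\Z_{\geq 0}$, which follows from Bernstein's argument after dividing out the simple integer poles.

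Second, your description of the ``dictionary'' is not what Theorem~\ref{thm: sheaf theoretic loeser} says, and the step linking it to $\tilde Z_f$ is missing. There is no class $[\varphi]_k\in F_{k+1}\gr^{\alpha}_V\iota_+\cO_X$ built from the test form by ``restriction and $\bar\partial$-differentiation''; $\varphi$ is a smooth top form, not a holomorphic object, and the Laurent coefficients of $\tilde Z_f(\varphi;s)$ are distributions in $\varphi$, not Hermitian forms on $F_{k+1}\gr^{\alpha}_V$. The paper's mechanism is the reverse: one \emph{chooses} a section $w=\sum_{i\leq k} g_i\otimes\partial_t^i\in F_{k+1}V^{\alpha}\iota_+\cO_X$ with $[w]\neq 0$, applies Theorem~\ref{thm: sheaf theoretic loeser} to get a pole of $Z_w(\varphi;s)$ at $-\alpha$ of the correct order, and then uses the elementary identity (via \eqref{eqn: malgrange isomorphism})
\[
Z_w(\varphi;s)=\sum_{0\leq i,j\leq k} s^{i+j}\,Z_f\bigl(h_i\overline{h_j}\,\varphi;\,s-k\bigr)
\]
to conclude that $Z_f$ itself (applied to the modified test function $h_i\overline{h_j}\varphi$) has a pole at $-\alpha-k$. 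This algebraic bridge from $Z_w$ to $Z_f$ is the step your sketch does not supply. Once it is in place, the positivity argument you outline (Lemma~\ref{lemma: lowest hodge is primitive}, Corollary~\ref{cor:Nmu}, Corollary~\ref{cor:HL lowest piece is positive}) is indeed exactly the content of Theorem~\ref{thm: sheaf theoretic loeser}, and maximizing $\mult_{s=-\alpha}b_w(s)$ over such $w$ gives the claimed lower bound.
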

By Theorem \ref{thm: Loesers conjecture} and Corollary \ref{corollary: determine the center of minimal exponent}, the pole order bound above is attained in the case of the minimal exponent, i.e.\ for $k = \lfloor \tilde{\alpha}_f \rfloor$ (Lemma \ref{lem:mink}) and $\alpha = \tilde{\alpha}_f - k$.

It is easy to see that if $\beta$ is a pole of $Z_f$, then $\beta-1$ is also a pole of $Z_f$. Therefore, by the result of Barlet discussed above, to find all poles of $Z_f$, it suffices to find for each root $s_0$ of $b_f(s)$ the smallest shift $m\in \Z_{\geq 0}$ such that $s_0 - m$ is a pole. Theorem \ref{thm: producing poles} controls such $m$ in terms of Hodge theory. We also give a more classical interpretation via Milnor fibers (Proposition \ref{prop: strengthen Barlet}), which strengthens results of \cite{Barlet86}.

\subsection{Hodge and higher multiplier ideals}

Integrals of the form \eqref{eq:intro zeta function} can also be used to define the \emph{multiplier ideals} (e.g. see \cite[Chapter 9]{Lazarsfeld}). These capture finer information  than $\lct(f)$ about the failure of $D=\mathrm{div}(f)$ to be log canonical. To extend this to the setting of log canonical singularities, Musta\c{t}\u{a} and Popa \cite{MP16}, respectively Schnell and Yang \cite{SY23}, introduced the \emph{Hodge ideals} $I_k(f^\alpha)$ and \emph{higher multiplier ideals} $\cJ_k(f^{\alpha})$ for $k\geq 0$; the higher multiplier ideals is equivalent to Saito's microlocal $V$-filtration \cite{Saito16}. They specialize to the multiplier ideals when $k = 0$:
\[ I_0(f^{\alpha})=\cJ_0(f^{\alpha-\epsilon})=\mc{J}(f^{\alpha - \epsilon}).\]
These ideals can be defined either in terms of a resolution of singularities, or in terms of the Hodge filtrations on certain mixed Hodge modules.

Unlike multiplier ideals, the ideals $I_k$ and $\mc{J}_k$ have no analytic characterizations in the literature. We provide such when $\alpha + k < \tilde{\alpha}_f + 1$, in which case $I_k(f^\alpha)= \cJ_k(f^{\alpha - \epsilon})$.  When $\alpha + k \geq \tilde{\alpha}_f + 1$, Theorem \ref{thm: analytic charaterization of Vfiltration} below implies another, albeit more complicated analytic description of these ideals, see Remark \ref{remark:analytic description of Hodge ideals and higher multiplier ideals}.

\begin{thm} \label{thm:intro Hodge ideals}
Let $\alpha \in [0, 1)$ and $k \in \mb{Z}_{\geq 0}$ satisfy $\alpha + k < \tilde{\alpha}_f + 1$. Then 
\[ \cJ_k(f^\alpha) = \{g \in \mc{O}_X \mid \text{for all $\varphi$, $\tilde{Z}_f(|g|^2\varphi; s)$ has no poles $\geq -\alpha - k$}\}.\]
\end{thm}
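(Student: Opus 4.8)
The plan is to reduce the statement to the already-established Theorem \ref{thm: producing poles} (together with the identification $\cJ_k(f^\alpha) = I_k(f^\alpha)$ when $\alpha + k < \tilde{\alpha}_f + 1$, and the description of higher multiplier ideals via the Hodge filtration on $\gr_V \iota_+\cO_X$). First I would recall that multiplying the test function by $|g|^2$ corresponds, on the $\sD$-module side, to replacing $\iota_+\cO_X$ by the submodule generated by $g$, i.e. to the graph embedding of $gf$ composed with the natural map, or more precisely to tracking $g\cdot \iota_+\cO_X \subseteq \iota_+\cO_X$ and its interaction with the $V$-filtration. Concretely, for $g \in \cO_X$ the function $\tilde Z_f(|g|^2\varphi;s)$ is again a zeta-type integral, and its poles $\geq -\alpha-k$ are governed — via the machinery behind Theorem \ref{thm: producing poles}, applied with $g$ inserted — by whether the class of $g \otimes \d_t^i$ survives in $F_{\bullet}\gr_V^{\beta}\iota_+\cO_X$ for $\beta \geq \alpha$ and the appropriate $i$. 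The key point is that $g \in \cJ_k(f^\alpha)$ is, by the Hodge-filtration description of higher multiplier ideals (equivalently Hodge ideals in this range, via \cite{SY23,MP16,Saito16}), exactly the condition that $g$ lies in the $F$-level of $\gr_V^{\geq \alpha}\iota_+\cO_X$ that forces all such classes to vanish down to the relevant order.

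The key steps, in order, would be: (1) establish the analytic-to-$\sD$-module dictionary for $\tilde Z_f(|g|^2\varphi;s)$, showing that its pole at $-\alpha-k$ of a given order is equivalent to non-vanishing of a specific graded piece $F_{k+1}\gr_V^{\alpha}(\iota_+\cO_X)$ evaluated on the class of $g$ — this is essentially Theorem \ref{thm: sheaf theoretic loeser} / the discussion in \S\ref{sec:zeta vs polarization} carried out with $g$; (2) invoke Theorem \ref{thm: producing poles} (and the ``residue is definite'' input from Theorem \ref{thm:intro lowest piece is positive}) to upgrade "pole $\Rightarrow$ non-vanishing" to an \emph{equivalence}: because the polarization is definite on the lowest Hodge piece, the residue cannot vanish accidentally, so a surviving class always produces an actual pole (no cancellation between different points of the support, which is where the positivity of the polarization is indispensable); (3) translate the condition "for all $\varphi$, no pole $\geq -\alpha - k$" into "$g$ annihilates $F_{k+1}\gr_V^{\beta}\iota_+\cO_X$ for every $\beta \in [\alpha,1)$ and every relevant integer shift landing $\geq -\alpha-k$", using that every pole of $Z_f$ is of the form $-\beta - j$ with $\gr_V^\beta \neq 0$ and that $\beta + j < \tilde\alpha_f + 1$ in the relevant range forces it to be a root of $\tilde b_f$; (4) match this with the known Hodge-theoretic formula $\cJ_k(f^\alpha) = \{g : g \text{ kills the appropriate } F_{k+1}\gr_V\}$ from \cite{SY23} (valid in the rational-singularities-adjacent range $\alpha + k < \tilde\alpha_f + 1$, where $I_k(f^\alpha) = \cJ_k(f^{\alpha-\epsilon})$), concluding equality of the two ideals.

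The main obstacle I anticipate is step (2): proving that a nonzero class in the lowest Hodge piece \emph{necessarily} produces a genuine pole, rather than merely contributing a term that could be cancelled by contributions from other strata or other components of $D$. This is exactly the role of the positivity in Theorem \ref{thm:intro lowest piece is positive}: the residue of $\tilde Z_f(|g|^2\varphi;s)$ at the top-order pole is, up to positive constants, the integral of a polarization pairing against $|g|^2\varphi$, which is a sum of \emph{definite} contributions and hence cannot vanish for suitable nonnegative $\varphi$ supported where the class is nonzero. Making this precise requires care in (a) localizing the statement so that one works with the polarization on a single lowest piece (using the strictness of morphisms of Hodge modules and the behaviour of $\gr_V$), and (b) handling the passage between the global integral and the local $\sD$-module computation — in particular ensuring the "for all $\varphi$" quantifier genuinely detects non-vanishing of the class at \emph{some} point of $X$. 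A secondary, more bookkeeping-type difficulty is keeping the index conventions straight: the Hodge filtration on $\iota_+\cO_X$ starts in degree $1$, the shift between $I_k$ and $\cJ_k$ (the $\epsilon$ and the $k$ versus $k+1$), and the precise correspondence between the integer $m$ in Barlet's shift and the index $k$ in $F_{k+1}\gr_V^\alpha$; all of this needs to be aligned so that the inequality $\alpha + k < \tilde\alpha_f + 1$ is exactly the hypothesis under which the clean formula holds.
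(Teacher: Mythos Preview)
Your proposal is correct and follows essentially the same approach as the paper: both directions hinge on tracking the class $[g\otimes\partial_t^j]$ in $F_{j+1}\gr_V^\beta\iota_+\cO_X$, using the hypothesis $\alpha+k<\tilde\alpha_f+1$ (via \cite[Lemma 5.27, Proposition 5.11]{SY23}) to ensure this lies in the \emph{lowest} Hodge piece at the jumping point, and then invoking the positivity of Theorem \ref{thm:intro lowest piece is positive}/Theorem \ref{thm: sheaf theoretic loeser} (with $w=g\otimes\partial_t^j$ in place of $1\otimes\partial_t^k$) to produce an honest pole of $\tilde Z_f(|g|^2\varphi;s)$ at $-\beta-j\geq -\alpha-k$. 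The only imprecision is in your step (3), where the relevant vanishing is of $[g\otimes\partial_t^j]$ in $F_{j+1}\gr_V^\beta$ for varying $(j,\beta)$ with $j+\beta\leq k+\alpha$, not $F_{k+1}$ throughout; and for the forward inclusion the paper makes explicit that $\cJ_i(f^\alpha)=\cO_X$ for $i<k$ forces $g\otimes\partial_t^k\in V^{>\alpha}\iota_+\cO_X$ (not merely in $\gr^F_{k+1}V^{>\alpha}$), after which the $b$-function bound \eqref{eqn: bound of pole order} gives the absence of poles.
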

\noindent When $k=0$, Theorem \ref{thm:intro Hodge ideals} describes the multiplier ideal using $Z_f$ as follows
\begin{equation*} \label{eq:intro multiplier ideal}
\mc{J}(f^{\alpha}) = \{g \in \mc{O}_X \mid \text{for all $\varphi$, $Z_f(|g|^2\varphi; s)$ has no poles $\geq -\alpha$}\}.
\end{equation*}

Taking into account also the weight filtrations, \cite{SY23} defined a sequence of ideals 
\[ \cJ_k(f^{\alpha})=W_0 \cJ_k(f^{\alpha - \epsilon}) \subseteq W_1 \cJ_k(f^{\alpha - \epsilon})  \subseteq \cdots \subseteq  W_{\dim X} \cJ_k(f^{\alpha - \epsilon})=\cJ_k(f^{\alpha - \epsilon})\]
interpolating between $\cJ_k(f^{\alpha})$ and $\cJ_k(f^{\alpha - \epsilon})$. If $D$ is reduced and $\alpha=1$, Olano \cite{OlanoweightedHodgeideal} introduces the \emph{weighted Hodge ideals} such that $W_{\bullet}I_k(f)=W_{\bullet}\cJ_k(f^{1-\epsilon})$ if $k<\tilde{\alpha}_f$ and $W_1 \cJ_0(f^{1-\epsilon})$ agrees with the \emph{adjoint ideal}  \cite{Olanoweightedmultiplierideal}. We give a description of these.

\begin{thm} \label{thm:intro weighted ideals}
For $\alpha \in [0, 1)$, $k,\ell \in \mb{Z}_{\geq 0}$ and $\alpha + k < \tilde{\alpha}_f + 1$, we have
\[ W_\ell \cJ_k(f^{\alpha-\epsilon}) = \{ g \in \mc{O}_X \mid \text{for all $\varphi$, $(s + \alpha + k)^\ell \tilde{Z}_f(|g|^2\varphi; s)$ has no poles $\geq - \alpha - k$}\}.\]
\end{thm}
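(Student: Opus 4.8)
The plan is to prove Theorem~\ref{thm:intro weighted ideals} as a weight-filtered refinement of Theorem~\ref{thm:intro Hodge ideals}, which is recovered by taking $\ell=0$. First I would make a purely formal reduction. Multiplying by $(s+\alpha+k)^\ell$ does not affect poles of $\tilde{Z}_f(|g|^2\varphi;s)$ away from $-\alpha-k$, and it removes the pole at $-\alpha-k$ exactly when the order of that pole is at most $\ell$; hence the right-hand side of the theorem is the set of $g$ such that, for every test function $\varphi$, the function $\tilde{Z}_f(|g|^2\varphi;s)$ has no pole in $(-\alpha-k,\infty)$ and has a pole of order $\le\ell$ at $-\alpha-k$. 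Applying Theorem~\ref{thm:intro Hodge ideals} with $\alpha$ replaced by values slightly less than $\alpha$ (legitimate since $\alpha+k<\tilde{\alpha}_f+1$ is an open condition and the poles form a discrete set), the first requirement is equivalent to $g\in\cJ_k(f^{\alpha-\epsilon})$; as every $W_\ell\cJ_k(f^{\alpha-\epsilon})$ is contained in $\cJ_k(f^{\alpha-\epsilon})$, it remains to show that for $g\in\cJ_k(f^{\alpha-\epsilon})$ one has $g\in W_\ell\cJ_k(f^{\alpha-\epsilon})$ if and only if, for all $\varphi$, the pole of $\tilde{Z}_f(|g|^2\varphi;s)$ at $s=-\alpha-k$ has order at most $\ell$.

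The core of the argument is then to compute the entire principal part of $\tilde{Z}_f(|g|^2\varphi;s)$ at $s=-\alpha-k$ using the dictionary between zeta functions and polarizations set up in \S\ref{sec:zeta vs polarization}, which already underlies Theorems~\ref{thm: Loesers conjecture} and \ref{thm: producing poles}. For $g\in\cJ_k(f^{\alpha-\epsilon})$, the class $[g]$ of $g\otimes\partial_t^k$ is a well-defined element of $F_{k+1}\gr^\alpha_V\iota_+\cO_X$, and I expect the coefficient of $(s+\alpha+k)^{-j-1}$ in the Laurent expansion to be, up to a nonzero universal constant, the integral against $\varphi$ of a polarization pairing of $[g]$ against its conjugate, evaluated on the weight-$j$ primitive part for the monodromy weight filtration $W_\bullet$ on $\gr^\alpha_V\iota_+\cO_X$ relative to the nilpotent operator of Theorem~\ref{thm: producing poles}. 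Here the hypothesis $\alpha+k<\tilde{\alpha}_f+1$ is essential: it forces $F_{k+1}\gr^\alpha_V\iota_+\cO_X$ to be the lowest nonzero piece of the Hodge filtration, so that, by strictness of the weight filtration, $[g]$ maps into the lowest Hodge piece of every pure Hodge module $\gr^W_w\gr^\alpha_V\iota_+\cO_X$. Theorem~\ref{thm:intro lowest piece is positive}, together with the Lefschetz decomposition identifying the induced polarization on each primitive part, then shows that each of these pairings is \emph{definite}. Consequently a given Laurent coefficient vanishes for all non-negative $\varphi$ if and only if the corresponding primitive component of $[g]$ is zero; peeling off primitive components from the top weight downwards, the pole of $\tilde{Z}_f(|g|^2\varphi;s)$ at $s=-\alpha-k$ has order $\le\ell$ for all $\varphi$ if and only if $[g]$ lies in the step of $W_\bullet$ on $F_{k+1}\gr^\alpha_V\iota_+\cO_X$ that, in the normalization of \cite{SY23}, defines $W_\ell\cJ_k(f^{\alpha-\epsilon})$. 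This matches the definition of the weight filtration on the higher multiplier ideals, completing the proof.

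I expect the main obstacle to be the middle step: establishing that \emph{all} of the Laurent coefficients of $\tilde{Z}_f(|g|^2\varphi;s)$ at $s=-\alpha-k$ — and not just the leading one, which is what is needed for Theorems~\ref{thm: Loesers conjecture} and \ref{thm: producing poles} — are computed by the polarizations on the successive graded pieces of the monodromy weight filtration. This forces one to work with the full polarized Hodge--Lefschetz structure on the vanishing cycles (relative monodromy filtration, primitive decomposition, and the sign conventions relating the polarization on $\gr^W_w$ to the original one via powers of the nilpotent), and to track the normalization constants and index shifts carefully enough that they line up with the conventions defining $W_\bullet\cJ_k(f^{\alpha-\epsilon})$ in \cite{SY23}. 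A subsidiary point requiring care is the downward induction on weights, which needs the positivity of Theorem~\ref{thm:intro lowest piece is positive} to stay available on each successive graded piece once the primitive components above it have been shown to vanish.
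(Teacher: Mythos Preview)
Your reduction in the first paragraph is exactly right and matches the paper. However, your plan for the core step is considerably more elaborate than what is needed, and rests on an expectation that is not quite correct. The Laurent coefficient of $(s+\alpha+k)^{-j-1}$ in $\tilde{Z}_f(|g|^2\varphi;s)$ is (up to harmless factors) $\langle\varphi,\gr_V^\alpha(S)([w],\op{N}^j[w])\rangle$ with $w=g\otimes\partial_t^k$; for $j$ strictly below the monodromy weight this is \emph{not} the polarization on a single primitive component of $[w]$ and need not vanish, so your ``peeling off primitive components from the top down'' via successive Laurent coefficients does not work as stated. The point you are missing is Lemma~\ref{lemma: lowest hodge is primitive}: because $\alpha+k<\tilde{\alpha}_f+1$ forces $F_{k+1}\gr_V^\alpha\iota_+\cO_X$ to be the lowest Hodge piece, the class $[w]$ is already \emph{primitive}. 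There is therefore no Lefschetz decomposition to unwind and no inductive peeling; the monodromy weight of $[w]$ is simply one less than its nilpotency order under $\op{N}$, which equals $\mult_{s=-\alpha}b_w(s)$ by Corollary~\ref{cor:Nmu}. The paper's proof is then a one-line invocation of the argument already carried out for Theorem~\ref{thm:intro Hodge ideals} (i.e.\ Theorem~\ref{thm: sheaf theoretic loeser}, which uses only the \emph{leading} Laurent coefficient and Corollary~\ref{cor:HL lowest piece is positive}): the maximal pole order of $\tilde{Z}_f(|g|^2\varphi;s)$ at $-\alpha-k$ equals $\mult_{s=-\alpha}b_w(s)$, hence coincides (up to the standard index shift) with the monodromy weight of $[w]$. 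Your anticipated main obstacle—computing all Laurent coefficients and tracking signs through a full Hodge--Lefschetz analysis—simply does not arise.
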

In \cite{KimYang}, Theorem \ref{thm:intro weighted ideals} is used to show that the analytic adjoint ideals (see \cite{adjointideal} for example) agree with the weighted multiplier ideals $W_{\bullet}\cJ_{0}(f^{\alpha-\epsilon})$.
\subsection{The $V$-filtration of Kashiwara--Malgrange}
A well-known result (see e.g. \cite{Sabbah}) describes the $V$-filtration on $\iota_+\mc{O}_X$ in terms of Bernstein--Sato polynomials by
\begin{equation}\label{eqn: V-filtration via b function intro} V^\alpha \iota_+\mc{O}_X = \{w \in \iota_+\mc{O}_X \mid \text{all the roots of $b_w(s)$ are $\leq -\alpha$}\}.\end{equation}
See \S\ref{sec: V filtration} for the definition of the notation $b_w(s)$. Musta\c{t}\u{a}--Popa \cite{Mustataprivate} asked whether there is a similar characterization in terms of the poles of zeta functions. More precisely, denote by $(\cO_X)_f$ the localization of $\cO_X$ along $f$ and denote by $\sigma$ the isomorphism \cite{Malgrange}
\[ \sigma: \iota_+ (\mc{O}_X)_f \overset{\sim}\to (\mc{O}_X)_f[s]f^s,\]
see \eqref{eqn: malgrange isomorphism}. Let $w,w'\in \iota_{+}\cO_X$, and define $Z_{w,w'}$ to be the meromorphic extension of 
\begin{equation*}\label{eqn: zeta function in intro} Z_{w,w'}(\varphi; s) = \int_X \sigma(w)\overline{\sigma(w')} \varphi,\end{equation*}
for a smooth test form $\varphi$, where the complex conjugation fixes $s$; see \S \ref{sec:Archimedean zeta function}.

Bernstein's solution to Gelfand's problem, together with \eqref{eqn: V-filtration via b function intro}, shows that
\[ V^\alpha \iota_+\mc{O}_X \subseteq \{ w \in \iota_+\mc{O}_X \mid \text{$Z_{w,w}$ has no poles $> -\alpha$}\}.\]
In view of our results, it is natural to hope that the above inclusion might be an equality; however, this is false in general (see Remark \ref{remark: Prop 1.5 is false for OX} for a counterexample). Instead, we prove the following result, which we believe to be the optimal statement of this kind.

\begin{thm} \label{thm: analytic charaterization of Vfiltration}
For any $\alpha \in \Q$, we have
\[ V^\alpha \iota_+(\mc{O}_X)_f = \{ w \in \iota_+(\mc{O}_X)_f \mid \forall w' \in \iota_+ (\cO_X)_f, \text{$Z_{w,w'}$ has no poles $> -\alpha$}\}.\]
Moreover, given $w \in V^{\alpha}\iota_{+}(\cO_X)_f$, the maximal pole order at $-\alpha$ among $Z_{w,w'}$ with $w'$ varying in $\iota_+ (\cO_X)_f$ is equal to  $\mult_{s=-\alpha}b_w(s)$. 
\end{thm}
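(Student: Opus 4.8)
The plan is to reduce everything to the behaviour of the zeta function $\tilde Z_f$ (really its pairing refinement $Z_{w,w'}$) against the Hodge and weight filtrations on $\iota_+(\cO_X)_f$, which as a $\sD$-module underlies a mixed Hodge module; then invoke the positivity of the polarization on the lowest piece of the Hodge filtration (Theorem \ref{thm:intro lowest piece is positive}) to promote "no cancellation of residues" into the precise statements about pole location and order. Concretely, I would first set up the dictionary: via Malgrange's isomorphism $\sigma$, an element $w \in \iota_+(\cO_X)_f$ corresponds to $\sigma(w) = \sum_j g_j(x) s^j f^s \in (\cO_X)_f[s]f^s$, and $Z_{w,w'}(\varphi;s)$ is a finite $\C[s]$-linear combination of the basic integrals $\int_X g_j \bar g'_k |f|^{2s}\varphi$. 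Using Bernstein's functional equation together with Sabbah's theorem $V^\alpha \iota_+\cO_X = \{w : \text{all roots of } b_w(s) \le -\alpha\}$ (and its localized analogue), one direction is essentially formal: if $w \in V^\alpha \iota_+(\cO_X)_f$, then $b_w(s)$ has all roots $\le -\alpha$, so the functional equation $P_w(s)\cdot \sigma(w)|_{s\mapsto s+1} = b_w(s)\sigma(w)$ lets one meromorphically continue each $Z_{w,w'}$ with poles controlled by $\prod_{i\ge 0} b_w(s+i)$, hence no poles $> -\alpha$; this gives the inclusion $\subseteq$ together with the upper bound $\mathrm{mult}_{s=-\alpha} b_w(s)$ on the pole order.

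The substance is the reverse inclusion and the sharpness of the pole order, and this is where Theorem \ref{thm:intro lowest piece is positive} enters. Suppose $w \notin V^\alpha \iota_+(\cO_X)_f$; I want to produce $w'$ (or simply take $w' = w$, or a nearby element) and $\varphi \ge 0$ such that $Z_{w,w'}$ genuinely has a pole $> -\alpha$, and similarly, given $w \in V^\alpha$ with $\mathrm{mult}_{s=-\alpha} b_w(s) = r$, to exhibit $w'$ making the pole order at $-\alpha$ equal to $r$. The idea is to pass to the graded pieces $\gr_V^\beta \iota_+(\cO_X)_f$ for the relevant $\beta \ge \alpha$ (finitely many occur), which carry polarized Hodge structures on their lowest nonzero Hodge piece after the usual twist. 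The leading Laurent coefficient of $Z_{w,w'}(\varphi;s)$ at such a pole $s = -\beta$ is, up to normalization, the pairing of the images $\overline{w}, \overline{w'}$ in an appropriate graded/nearby-cycle object against the polarization form integrated against $\varphi$ — this is the content of the computations underlying Theorems \ref{thm: sheaf theoretic loeser} and \ref{thm: producing poles}. By Theorem \ref{thm:intro lowest piece is positive}, on the lowest Hodge piece this integrated form is definite, so it does not vanish when paired with a suitable positive $\varphi$ supported where $\overline w \ne 0$; taking $w' = w$ (or pairing $w$ with an element realizing the highest weight piece, to see the full order $r$) forces the residue to be nonzero. This simultaneously yields: (i) $w \notin V^\alpha \Rightarrow$ some $Z_{w,w'}$ has a pole $> -\alpha$, proving $\supseteq$; and (ii) the maximal pole order at $-\alpha$ over all $w'$ equals $\mathrm{mult}_{s=-\alpha} b_w(s)$, by matching the filtration step where $\overline w$ first becomes nonzero with the multiplicity of $-\alpha$ as a root of $b_w$ (Sabbah's description again, now refined by the weight filtration as in the proof of Theorem \ref{thm:intro weighted ideals}).

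The main obstacle, I expect, is the bookkeeping needed to pass from the localization $(\cO_X)_f$ — where $\sigma(w)$ may involve negative powers of $f$ and the relevant $\sD$-module is no longer the "geometric" $\iota_+\cO_X$ but its localization — to a genuine polarized (mixed) Hodge module on which Theorem \ref{thm:intro lowest piece is positive} applies. One must check that $\iota_+(\cO_X)_f$ underlies a mixed Hodge module whose weight-graded pieces near a given slope $\beta$ restrict, on $\gr_V^\beta$, to polarizable complex Hodge structures, and that the definiteness statement survives the localization and the passage to $\gr_W$; there is also the nuisance that $b_w(s)$ for $w \in \iota_+(\cO_X)_f$ can have roots that are positive rationals (since we allow poles of $Z_f$ to move into $\Re s \le 0$ only after shifting), so the indexing of $V^\bullet$ over all of $\Q$, rather than $[0,1)$, must be handled with care — in particular the counterexample alluded to in Remark \ref{remark: Prop 1.5 is false for OX} shows that the non-localized statement is genuinely false, so the proof must use localization in an essential way and cannot be a routine globalization. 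Once the Hodge-theoretic input is correctly installed, the analytic continuation and residue computations are the same ones already carried out for Theorems \ref{thm: Loesers conjecture} and \ref{thm: producing poles}, applied fibrewise over the relevant $V$-graded pieces.
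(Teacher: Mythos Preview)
Your treatment of the easy inclusion $V^\alpha \subseteq U^\alpha$ and the upper bound on pole order is correct and matches the paper.

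There is, however, a genuine gap in your strategy for the reverse inclusion and for sharpness of the pole order. You propose to invoke Theorem \ref{thm:intro lowest piece is positive} and, in the first instance, to take $w' = w$ with a nonnegative test form $\varphi$. But for an arbitrary $w \in \iota_+(\cO_X)_f$ there is no reason the class $[w] \in \gr_V^\beta \iota_+(\cO_X)_f$ should lie in the lowest piece of the Hodge filtration; outside that piece the polarization is not definite, so the residue $\langle \varphi, \gr_V^\beta(S)([w],\op{N}^r[w])\rangle$ can vanish even for $\varphi \ge 0$. This is not hypothetical: the paper records in Remark \ref{remark: Prop 1.5 is false for OX} that $\{w : Z_{w,w} \text{ has no poles } > -\alpha\}$ strictly contains $V^\alpha\iota_+(\cO_X)_f$ in general, so the choice $w' = w$ is provably insufficient, and your fallback of ``a nearby element'' or ``an element realizing the highest weight piece'' still depends on positivity you do not have.

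The paper's proof bypasses Theorem \ref{thm:intro lowest piece is positive} altogether and uses something more elementary: the Hermitian pairing $\gr_V^\gamma(S)$ on $\gr_V^\gamma \iota_+(\cO_X)_f$ is \emph{non-degenerate} for $\gamma \in (0,1]$ (this is due to Sabbah \cite{Sabbah02} and does not require the full polarized Hodge--Lefschetz structure). Given $w \notin V^\alpha$, one uses the bijectivity of $t$ on the localization to shift into the range $\gamma \in (0,1]$, and non-degeneracy then supplies \emph{some} $w'$ with $\Res_{s=-\beta} Z_{w,w'} \neq 0$. The pole-order statement is proved the same way: if $\ell = \mult_{s=-\alpha} b_w(s)$, then Corollary \ref{cor:Nmu} gives $(s+\alpha)^{\ell-1}[w] \neq 0$ in $\gr_V^\alpha$, and non-degeneracy furnishes $w'$ with $(s+\alpha)^{\ell-1}Z_{w,w'}$ having a simple pole at $-\alpha$. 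The heavy positivity input is reserved for Theorems \ref{thm: Loesers conjecture} and \ref{thm: producing poles}, where one is forced onto the diagonal $Z_{w,w}$ and where, crucially, the relevant $w$ \emph{does} sit in the lowest Hodge piece by construction.
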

Consequently, one obtains an analytic description of Hodge ideals and higher multiplier ideals in general, see Remark \ref{remark:analytic description of Hodge ideals and higher multiplier ideals}. The proof relies on the non-degeneracy of the polarization on $\gr^{\alpha}_V\iota_{+}\cO_X$, which goes back to Sabbah \cite{Sabbah02}. Note that Theorem \ref{thm: analytic charaterization of Vfiltration} fails if we replace the localization $(\mc{O}_X)_f$ with $\cO_X$ itself, see Remark \ref{remark: Prop 1.5 is false for OX}. 

\subsection{Log resolutions, dual complexes and examples}
Finally, we note that Theorem \ref{thm: Loesers conjecture} has the following consequence for the complexity of the pair $(X,D)$ where $D=\mathrm{div}(f)$. Let $\pi:\tilde{X}\to X$ be a log resolution of $(X,D)$ such that $\pi$ is smooth over $X\setminus D_{\mathrm{sing}}$ and $\mathrm{Supp}(\pi^{\ast}D)=\sum_{i\in I}E_i$ is normal crossing. Write
\[ \pi^{\ast}D=\sum_{i\in I} a_iE_i, \quad K_{\tilde{X}/X}=\sum_{i\in I} k_iE_i.\]
Let $J$ be the set of $j \in I$ for which there exists $\ell\in \Z_{\geq 0}$ such that $\tilde{\alpha}_f=(k_j+1+\ell)/a_j$. Let $\cN(\pi, \tilde{\alpha}_f)$ be the dual complex of $\{E_j\}_{j \in J}$ \cite[Definition 12]{KXu}, which records the intersections among the divisors. By Theorem \ref{thm: Loesers conjecture} and \cite[Theorem 5.4.1]{igusa}, we have:
\begin{corollary}\label{corollary: bound of dual complex}
    $\dim \cN(\pi, \tilde{\alpha}_f)\geq\begin{cases}
        \mult_{s=-\tilde{
    \alpha}_f}b_f(s), \quad &\textrm{if $\tilde{\alpha}_f\in \Z_{\geq 2}$},\\
     \mult_{s=-\tilde{
    \alpha}_f}b_f(s)-1, \quad &\textrm{if $\tilde{\alpha}_f\not\in \Z_{\geq 2}$}.
    \end{cases}$
\end{corollary}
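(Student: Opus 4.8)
The plan is to combine Theorem \ref{thm: Loesers conjecture} with the classical Denef--Igusa--style description of the poles of $Z_f$ in terms of the numerical data of a log resolution. Recall that by \cite[Theorem 5.4.1]{igusa}, given the log resolution $\pi : \tilde{X} \to X$ with $\pi^\ast D = \sum_{i \in I} a_i E_i$ and $K_{\tilde{X}/X} = \sum_{i \in I} k_i E_i$, the zeta function $Z_f$ extends to a meromorphic function whose poles are contained in the set $\{ -(k_i + 1 + \ell)/a_i : i \in I, \ \ell \in \Z_{\geq 0}\}$, and moreover the order of a pole at a point $s_0$ is bounded above by the maximal number of divisors $E_i$ meeting simultaneously among those contributing $s_0$, i.e.\ by $1 + \dim \mathcal{N}'$, where $\mathcal{N}'$ is the dual complex of the relevant sub-collection of the $E_i$. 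Applied at $s_0 = -\tilde{\alpha}_f$: the indices $i$ for which $-(k_i+1+\ell)/a_i = -\tilde{\alpha}_f$ for some $\ell \in \Z_{\geq 0}$ are exactly the elements of $J$, so the order of $-\tilde{\alpha}_f$ as a pole of $Z_f$ is at most $1 + \dim \mathcal{N}(\pi, \tilde{\alpha}_f)$.

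First I would translate the pole order of $\tilde{Z}_f$ into the pole order of $Z_f$. Since $\tilde{Z}_f(\varphi;s) = Z_f(\varphi;s)/\Gamma(s+1)$ and $\Gamma(s+1)$ has no zeros and only simple poles at $s \in \Z_{<0}$, the orders of $\tilde{Z}_f$ and $Z_f$ at $s = -\tilde{\alpha}_f$ agree whenever $\tilde{\alpha}_f \notin \Z_{\geq 1}$; when $\tilde{\alpha}_f \in \Z_{\geq 2}$ (the case $\tilde{\alpha}_f = 1$ is excluded since then $\tilde{\alpha}_f = \lct(f)$ and one is in the classical multiplier ideal range, but in any event $1 \notin \Z_{\geq 2}$), the simple pole of $\Gamma(s+1)$ at that integer contributes one extra to the order, so $\mult_{s=-\tilde{\alpha}_f} Z_f = \mult_{s=-\tilde{\alpha}_f} \tilde{Z}_f + 1$. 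Next, by Theorem \ref{thm: Loesers conjecture}, $\mult_{s=-\tilde{\alpha}_f} \tilde{Z}_f = \mult_{s=-\tilde{\alpha}_f} \tilde{b}_f(s) = \mult_{s=-\tilde{\alpha}_f} b_f(s)$, the last equality because $-\tilde{\alpha}_f \neq -1$ as $\tilde{\alpha}_f > 0$ and the only root removed in passing from $b_f$ to $\tilde{b}_f$ is $-1$. (If $\tilde{\alpha}_f = 1$ then $-\tilde{\alpha}_f = -1$ and $\mult_{s=-1} b_f = \mult_{s=-1}\tilde b_f + 1$, but this value of $\tilde\alpha_f$ falls under the ``not in $\Z_{\geq 2}$'' case and, being also an integer, the two corrections cancel; I would note this subtlety explicitly.) Combining: in the case $\tilde{\alpha}_f \in \Z_{\geq 2}$ we get $\mult_{s=-\tilde{\alpha}_f} Z_f = \mult_{s=-\tilde{\alpha}_f} b_f(s) + 1$, and otherwise $\mult_{s=-\tilde{\alpha}_f} Z_f = \mult_{s=-\tilde{\alpha}_f} b_f(s)$.

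Finally I would feed the lower bound on $\mult_{s=-\tilde{\alpha}_f} Z_f$ from Theorem \ref{thm: Loesers conjecture} into the upper bound $\mult_{s=-\tilde{\alpha}_f} Z_f \leq 1 + \dim \mathcal{N}(\pi, \tilde{\alpha}_f)$ from \cite[Theorem 5.4.1]{igusa}. In the case $\tilde{\alpha}_f \in \Z_{\geq 2}$ this yields $\mult_{s=-\tilde{\alpha}_f} b_f(s) + 1 \leq 1 + \dim \mathcal{N}(\pi,\tilde{\alpha}_f)$, i.e.\ $\dim \mathcal{N}(\pi,\tilde{\alpha}_f) \geq \mult_{s=-\tilde{\alpha}_f} b_f(s)$; in the remaining case it yields $\mult_{s=-\tilde{\alpha}_f} b_f(s) \leq 1 + \dim \mathcal{N}(\pi,\tilde{\alpha}_f)$, i.e.\ $\dim \mathcal{N}(\pi,\tilde{\alpha}_f) \geq \mult_{s=-\tilde{\alpha}_f} b_f(s) - 1$. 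This is exactly the claimed dichotomy. The main obstacle I anticipate is purely bookkeeping: making sure the precise form of the pole-order bound in \cite[Theorem 5.4.1]{igusa} matches the dual-complex normalization of \cite[Definition 12]{KXu} (whether one counts maximal simplices or their dimension, whether the empty intersection is allowed, and the off-by-one in ``$\dim$ of dual complex'' vs.\ ``number of components meeting''), together with correctly handling the $\Gamma$-factor shift at integer values of $\tilde{\alpha}_f$; there is no serious analytic or geometric difficulty beyond this.
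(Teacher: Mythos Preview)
Your proposal is correct and follows exactly the route the paper indicates: the paper's entire proof is the one-line remark ``By Theorem~\ref{thm: Loesers conjecture} and \cite[Theorem 5.4.1]{igusa},'' and you have simply written out the bookkeeping (the $\Gamma$-factor shift at integer $\tilde{\alpha}_f$ and the passage between $b_f$ and $\tilde{b}_f$) that the paper leaves implicit. The only cosmetic slip is the phrase ``$-\tilde{\alpha}_f \neq -1$ as $\tilde{\alpha}_f > 0$,'' which does not yield the conclusion, but you immediately handle the case $\tilde{\alpha}_f = 1$ separately, so the argument stands.
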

A similar argument shows that $\textrm{mult}_{s=-\mathrm{lct}(f)}b_f(s)$ is equal to the maximal number of the divisors $E_i$ computing the $\mathrm{lct}(f)$ and intersecting nontrivially; see also 
\cite[Lemma 3.2]{AdiceamMarmon}. If the proper transform $\tilde{D}$ of $D$ is smooth, then using the arguments of \cite{Atiyah,BG}, Theorem \ref{thm: Loesers conjecture} recovers the fact that $\tilde{\alpha}_f\geq \min_{E_i\neq \tilde{D}}\frac{k_i+1}{a_i}$ (proved in \cite{MPVfiltration,DirksMustata}). As observed by Koll\'ar \cite[Remark 10.8]{kollar}, the inequality can be strict: from our perspective, this is explained by potential cancellation between poles from different charts of $\tilde{X}$. 

To finish the introduction, we discuss two examples in view of Theorem \ref{thm: Loesers conjecture}.
\begin{example}\label{example: Whitney umbrella}
Let $D=\mathrm{div}(f)$ be the Whitney umbrella, where $f=x_1^2 - x_2^2 x_3$ on $X= \C^3$.
Let $\pi:\tilde{X}\to X$ be the blowup of $X$ along the $x_3$-axis with $\tilde{X} \subset \C^3 \times \mathbb{P}^1$.  Denote by $[y_1, y_2]$ the projective coordinates on $\mathbb{P}^1$ and consider the opens $U_i = \{(x,y) \in \tilde{X} \, \mid \, y_i \neq 0 \} \cong \C^3$ for $i=1,2$. On $U_1$, set $x_2' = y_2/y_1$ and on $U_2$ set $x_1' = y_1/y_2$. Then 
\begin{align*}
(\pi \circ f)_{|U_1} = x_1^2(1-x_2'^2 x_3), &\quad \pi^*(dx)_{|U_1} = x_1 dx'|_{U_1},\\
(\pi \circ f)_{|U_2} = x_2^2(x_1'^2 - x_3), &\quad \pi^*(dx)_{|U_2} = x_2 dx'|_{U_2},
\end{align*}
where $dx$ and $dx'$ stand for the standard volume forms on $X$ and $X'$, respectively.

Let $B \subseteq X$ be a small closed ball centered at $0$, of radius $\varepsilon \ll 1$, and $\varphi$ a non-negative test function supported on $B$ with $\varphi(0)>0$. Consider the open $U_1' = \{(x_1, x_2', x_3) \in U_1 \, \mid \, |x_2'| < \varepsilon\}$ and choose a partition of unity $\{p_1, p_2\}$ subordinate to the open covering $U_1' \cup U_2 = \tilde{X}$. Then
\begin{align*}
Z_f(\varphi; s) 
= \int_{U_1'} |x_1^2|^{2s+1}&|1-x_2'^2x_3|^{2s} \cdot p_1 \cdot (\pi\circ\varphi) \, dx' \wedge d\bar{x'}+  \\
&+\int_{U_2} |x_2^2|^{2s+1}|x_1'^2-x_3|^{2s} \cdot p_2 \cdot (\pi\circ\varphi) \, dx' \wedge d\bar{x'}.
\end{align*}
Notice that on $U_1' \cap \pi^{-1}(B) \supseteq \mathrm{supp}(p_1 \cdot (\pi \circ \varphi))$, we have $|(x_2')^2x_3| < \varepsilon ^3 \ll 1$, and thus $|1-x_2'^2 x_3|^2$ is invertible. It follows that the first summand has pole at $s=-1$ of order at most $1$ since only the exceptional divisor contributes. On the other hand, it is easy to see that the second summand has order $2$. Thus, the pole order of $Z_f$ at $s=-1$ is at least $2$. The same argument for an arbitrary $\varphi$ shows that the pole order is $\leq 2$, so pole order is exactly $2$. In fact, the same argument shows that the pole order of $Z_f$ at any negative integer is $2$. In particular, we conclude $\tilde{\alpha}_f=1$ by Theorem \ref{thm: Loesers conjecture}.
\end{example}

\begin{example}\label{example: Denef Sargos conjecture}
    Let $f$ be a Newton-nondegenerate function on $\C^n$ with $f(0)=0$. Denote by $(t_0,\ldots,t_0)$ the intersection point of the union of faces of the Newton polygon $\Gamma(f)$ of $f$ with the diagonal in $\R^n$, and $\tau_0$ the smallest face of $\Gamma(f)$ containing this point. Denef--Sargos conjectured that (the complex version of \cite[Conjecture 4]{DS92}, see also \cite[Conjecture 3]{DNS}) if $1/t_0\not\in \Z$, then the pole order of $Z_f$ at $s=-1/t_0$ is equal to the $\mathrm{codim}_{\R^n}\tau_0$  if and only if $\tau_0$ is stable in the sense of \cite[Page 3]{DNS}.  In \cite[Example 1]{DNS}, the authors provide an example $f=z_2^2z_3^3z_4+z_1^2z_3z_4^3+z_1^2z_2^2z_3z_4$ where their strategy towards the conjecture does not work.  Using Macaulay2, one gets $\tilde{\alpha}_f=2/3=1/t_0$ and $\textrm{mult}_{s=-\tilde{\alpha}_f}b_f(s)=2$. Since $\mathrm{codim}_{\R^4}\tau_0=2$ and $\tau_0$ is stable by \cite[Example 1]{DNS}, we conclude that Denef--Sargos conjecture holds for this $f$ by Theorem \ref{thm: Loesers conjecture}.
\end{example}

\subsection*{Outline of the paper} We begin in \S\ref{sec: V filtration} with a discussion of Bernstein--Sato polynomials and the $V$-filtration in a general context. In \S\ref{sec:MHM} we recall the basics of complex Hodge modules, give the proof of Theorem \ref{thm:intro lowest piece is positive} and discuss the more general context of polarized Hodge--Lefschetz modules. In \S\ref{sec: polarization on nearby and vanishing cycles} we recall the polarized Hodge--Lefschetz structure on nearby/vanishing cycles and relate it to poles of Archimedean zeta functions and their generalizations. Finally, we apply these tools in \S\ref{sec: proof of main results} to prove the rest of our main results.

\subsection*{Convention}

Throughout the paper, if $\cF$ is a sheaf on a complex manifold $X$, the notation $m\in \cF$ means that $m\in \Gamma(U,\cF|_U)$ for some open subset $U\subseteq X$.

\subsection*{Acknowledgements}

We thank Mihnea Popa, Christian Schnell, Mircea Musta\c{t}\u{a}, Claude Sabbah, Bradley Dirks, Mingyi Zhang, and Kari Vilonen for helpful discussions. We thank Raf Cluckers for pointing out the reference \cite{AdiceamMarmon}.

\section{$V$-filtrations and Bernstein--Sato polynomials}\label{sec: V filtration}

Throughout the section, let $f$ be a holomorphic function on a complex manifold $X$. Denote by $\iota: X \to X\times \C$ the graph embedding sending $x$ to $(x,f(x))$. Let $t$ be the coordinate on $\C$ and denote by $s\colonequals-\d_t t$. Let $\cM$ be a regular holonomic \emph{left} $\sD_X$-module with quasi-unipotent monodromy along $f$.

\subsection{$V$-filtrations}\label{sec: definition of V filtration}
The following is proved by Kashiwara \cite{Kas83} and Malgrange \cite{Malgrange}.
\begin{thm}\label{thm: uniqueness of V filtration}
    There exists a unique exhaustive decreasing filtration on $\iota_{+}\cM$, called the \emph{$V$-filtration along $f$}, which satisfies
    \begin{enumerate}
        \item $V^{\bullet}\iota_{+}\cM$ is indexed left-continuously and discretely by rational numbers so that $V^{>\alpha}\iota_{+}\cM=V^{\alpha+\epsilon}\iota_{+}\cM$ for $0<\epsilon\ll 1$,
        \item for each $\alpha\in \Q$, $V^{\alpha}\iota_{+}\cM$ is a coherent $\sD_X\langle s,t\rangle$-module,
\item $t\cdot V^{\alpha}\iota_{+}\cM \subseteq V^{\alpha+1}\iota_{+}\cM$, with equality for $\alpha\gg 0$ and $\d_t\cdot V^{\alpha}\iota_{+}\cM\subseteq V^{\alpha-1}\iota_{+}\cM$,
    \item $s+\alpha$ acts nilpotently on $\gr_V^{\alpha}\iota_{+}\cM \colonequals V^\alpha \iota_+\cM / V^{>\alpha} \iota_+ \cM$.
    \end{enumerate}
\end{thm}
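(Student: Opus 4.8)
The plan is to follow the classical route of Kashiwara \cite{Kas83} and Malgrange \cite{Malgrange}, establishing uniqueness first and then constructing the filtration locally and patching. For uniqueness, suppose $U^\bullet$ and $V^\bullet$ both satisfy (1)--(4). The algebraic fact that drives everything is that, by (4), for $\alpha\neq\beta$ the operator $s+\beta$ acts invertibly on $\gr^\alpha_V\iota_+\cM$, being the sum of a nilpotent operator and the nonzero scalar $\beta-\alpha$ (and symmetrically with $U$ in place of $V$). Combined with the coherence over $V^0\sD_{X\times\C}=\sD_X\langle s,t\rangle$ from (2), which makes each $U^\alpha$ and $V^\alpha$ a Noetherian module, and with exhaustiveness, one shows $U^\alpha\subseteq V^\alpha$ for all $\alpha$ as follows: given $m\in U^\alpha$, let $\beta$ be the largest index with $m\in V^\beta$; if $\beta<\alpha$, the image of $m$ in $\gr^\beta_V\iota_+\cM$ is nonzero and is killed by a power of $s+\beta$, whereas $s+\beta$ is invertible on $\gr^\alpha_U\iota_+\cM$, and a Noetherian comparison of the two filtrations contradicts the maximality of $\beta$. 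By symmetry $U=V$; in particular, once the filtration is constructed locally on $X$, uniqueness lets it glue.

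For existence, observe that the graph embedding already turns the $V$-filtration along $f$ into the $V$-filtration of the regular holonomic $\sD_{X\times\C}$-module $\iota_+\cM$ along the smooth hypersurface $\{t=0\}$, so we are reduced to the smooth-divisor case. Locally on $X$, one starts from a \emph{good} $V$-filtration $U^\bullet$: a $\Z$-indexed filtration by coherent $V^0\sD_{X\times\C}$-submodules whose Rees module is coherent over the Rees ring of $V^\bullet\sD_{X\times\C}$. Such a filtration exists because $\iota_+\cM$ is $\sD_{X\times\C}$-coherent. The crucial input is the existence of a nonzero polynomial relation for the action of $s=-\d_t t$ on the graded pieces of $U^\bullet$ (a ``$b$-function of $U^\bullet$''): its existence follows from holonomicity, and the quasi-unipotence of the monodromy forces its roots to be rational. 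Treating each residue class modulo $\Z$ of these roots separately, and using that there are only finitely many of them, one renormalizes $U^\bullet$ into a left-continuous, discretely $\Q$-indexed filtration on which $s+\alpha$ acts nilpotently on each $\gr^\alpha_V\iota_+\cM$; properties (1)--(4) are then read off from the construction.

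I expect the main obstacle to be precisely this $b$-function finiteness together with the rationality of its roots: it is the technical core of the statement and the only point where holonomicity, regularity, and quasi-unipotence of the monodromy genuinely enter. Once that is available, the renormalization producing $V^\bullet$, the verification of the axioms, and the gluing via uniqueness are all essentially formal. (Alternatively, Sabbah's description $V^\alpha\iota_+\cM=\{\,m\mid\text{all roots of the }b\text{-function of }m\text{ are}\leq-\alpha\,\}$ can serve as a blueprint for the renormalized filtration, though justifying that this prescription really defines a filtration with properties (1)--(4) ultimately rests on the same finiteness.)
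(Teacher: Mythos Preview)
The paper does not prove this theorem at all: it is stated as a result of Kashiwara \cite{Kas83} and Malgrange \cite{Malgrange} and used as a black box throughout. Your proposal therefore cannot be compared against a ``paper's own proof,'' but what you have written is a faithful outline of the classical argument in those references, and is the right route.

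One small comment on your uniqueness sketch: the sentence ``$s+\beta$ is invertible on $\gr^\alpha_U\iota_+\cM$, and a Noetherian comparison of the two filtrations contradicts the maximality of $\beta$'' compresses the real work. Knowing that $s+\beta$ is nilpotent on $\gr^\beta_V$ and invertible on $\gr^\alpha_U$ does not by itself yield a contradiction, since these are graded pieces of \emph{different} filtrations. The standard way to close the gap is to use coherence of $V^\beta$ over $\sD_X\langle s,t\rangle$ together with exhaustiveness of $U^\bullet$ to get $V^\beta\subseteq U^{\alpha_0}$ for some $\alpha_0$, and then argue with the induced filtration $U^\bullet\cap V^\beta$ on $V^\beta$: on each successive quotient $s$ has generalized eigenvalues $\leq -\alpha_0$ coming from $U$, forcing the image of $U^\alpha\cap V^\beta$ in $\gr^\beta_V$ (where $s$ has the single generalized eigenvalue $-\beta>-\alpha$) to vanish. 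Your parenthetical about Sabbah's description is another clean way to phrase the same finiteness, and indeed the paper later invokes exactly that characterization (Theorem~\ref{thm: Sabbah}).
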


Assume $f$ acts bijectively on $\cM$ and consider the $\sD_X\langle s, t \rangle$-module $\cM[s]f^s$, where 
\begin{align*}
    \xi\cdot(ms^{\ell}f^s)&=\left(\xi(m)s^{\ell}+\frac{\xi(f)}{f}ms^{\ell+1}\right)f^s, \quad \forall \xi\in T_X,\\
    s\cdot (ms^{\ell}f^s)&=ms^{\ell+1}f^s,\quad t\cdot (ms^{\ell}f^s)=\left(fm(s+1)^{\ell} \right) f^s,
\end{align*}
and $st=t(s-1)$. By \cite{Malgrange}, there is an isomorphism of $\sD_X \langle s,t\rangle$-modules
\begin{equation}\label{eqn: malgrange isomorphism}
\sigma:\iota_{+}\cM = \bigoplus_{\ell \geq 0} \cM \otimes \partial_t^\ell \xrightarrow{\sim} \cM[s]f^s, \quad m\otimes \d_t^{\ell} \to \frac{m}{f^{\ell}}\prod_{j=0}^{\ell-1}(-s+j)f^s,
\end{equation}
where the inverse morphism is given by $ms^{\ell}f^s\mapsto m\otimes (-\d_tt)^{\ell}$ (see also \cite[Proposition 2.5]{MPVfiltration}). We freely use the identification for submodules of such $\cM$ as well. 

\subsection{Bernstein-Sato polynomials}
In this section we assume $f$ acts injectively on $\cM$. Since $\mc{M}$ injects into its localization $\mc{M}_f$, we can regard sections of $\iota_+\mc{M}$ as elements in $\mc{M}_f[s]f^s$ via the isomorphism \eqref{eqn: malgrange isomorphism}.
\begin{definition}\label{definition: definition of BS polynomials}

For a local section $w\in \iota_{+}\cM$, the \emph{Bernstein--Sato polynomial} (or \emph{$b$-function}) is the unique monic polynomial $b_w(s)\in\C[s]$ of smallest degree satisfying
\begin{align}\label{eqn: definition of b-function}
     P(s,t)\cdot (t\cdot w)=b_w(s)\cdot w,
\end{align}
for some $P(s,t)\in \sD_X\langle s,t\rangle$.
\end{definition}
By \cite{Kas76}, the $b$-function $b_w(s)$ only exists locally. Up to shrinking the open subset containing the support of $w$, we can assume $b_w(s)$ exists. 
\begin{remark}
If $\cN\subseteq \cM$ is a submodule and $w\in \iota_{+}\cN$, then the two notions of $b_w(s)$ coincide. If $\cM=(\cO_X)_f$, the isomorphism \eqref{eqn: malgrange isomorphism} gives $b_{1\otimes 1}(s)=b_f(s)$ as in the introduction.
\end{remark}

\begin{thm}[\cite{Sabbah}]\label{thm: Sabbah}
For any $\alpha \in \Q$, we have
    \[ V^{\alpha}\iota_{+}\cM=\{w \in \iota_{+}\cM \mid \textrm{ all the roots of $b_w(s)$ are $\leq -\alpha$}\}.\]
\end{thm}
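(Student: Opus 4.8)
The statement to prove is: for a holomorphic $f$ on $X$ and $\cM$ regular holonomic with $f$ acting injectively, one has $V^{\alpha}\iota_{+}\cM = \{w \in \iota_{+}\cM \mid \text{all roots of } b_w(s) \text{ are } \leq -\alpha\}$, where the $V$-filtration is the one characterized by Theorem \ref{thm: uniqueness of V filtration}. My plan is to prove the two inclusions separately, using the uniqueness/axiomatic properties of $V^{\bullet}$.

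\medskip
\noindent\textbf{Step 1: The inclusion $\subseteq$.} Fix $w \in V^{\alpha}\iota_{+}\cM$. I want to show every root of $b_w(s)$ is $\leq -\alpha$. The idea is that $tw \in V^{\alpha+1}\iota_{+}\cM$ by property (3), and $V^{\alpha+1}\iota_{+}\cM$ is a coherent $\sD_X\langle s, t\rangle$-module by property (2). By a standard argument (Artin--Rees / good filtration on coherent $V$-modules, going back to Kashiwara--Malgrange), the submodule $\sD_X\langle s, t\rangle \cdot tw$ of $V^{\alpha+1}\iota_{+}\cM$ is again coherent, hence admits a finite $V$-adic presentation; one deduces that there is a polynomial $b(s)$ all of whose roots are $\leq -\alpha$ with $b(s) w \in \sum_{\beta > \alpha} V^{\beta}$-type terms, more precisely $b(s)w \in \sD_X\langle s,t\rangle \cdot tw$. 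The cleanest route: consider $N := \sD_X\langle s,t\rangle \cdot w \subseteq V^{\alpha}\iota_{+}\cM$, which is coherent over $\sD_X\langle s, t\rangle$; then $N / (\sD_X\langle s,t\rangle \cdot tw)$ is supported on $t = 0$ and finitely generated over $\sD_X\langle s\rangle$, so by property (4) (nilpotence of $s + \beta$ on each $\gr_V^\beta$ for $\beta \geq \alpha$) the operator $\prod_{\beta}(s+\beta)^{m_\beta}$ over the finitely many relevant $\beta \geq \alpha$ kills $\bar w$; all these $\beta$ satisfy $-\beta \leq -\alpha$, giving the root condition.

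\medskip
\noindent\textbf{Step 2: The inclusion $\supseteq$.} Let $w$ have all roots of $b_w(s)$ equal to $-\beta_i$ with $\beta_i \geq \alpha$, i.e. $b_w(s) = \prod_i (s + \beta_i)^{m_i}$. By definition of $b_w$, there is $P(s,t)$ with $P(s,t) \cdot tw = b_w(s) w$. Write $w = \sum_{\gamma} w_\gamma$ according to... actually more carefully: pick the smallest $\gamma$ with $w \in V^{\gamma}\iota_{+}\cM$ (this exists by exhaustiveness and discreteness); I must show $\gamma \geq \alpha$. Suppose for contradiction $\gamma < \alpha$. Look at the image $\bar w \neq 0$ in $\gr_V^{\gamma}\iota_{+}\cM$. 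Since $t$ maps $V^\gamma \to V^{\gamma+1}$ and $\partial_t$ maps $V^\gamma \to V^{\gamma-1}$, the operator $s = -\partial_t t$ preserves $V^\gamma$ and $V^{>\gamma}$, hence acts on $\gr_V^\gamma$; moreover $\sD_X\langle s,t\rangle\cdot tw \subseteq V^{>\gamma}$ since $tw \in V^{\gamma+1} \subseteq V^{>\gamma}$ and this is a $\sD_X\langle s,t\rangle$-submodule. Therefore reducing $P(s,t)\cdot tw = b_w(s)w$ modulo $V^{>\gamma}$ gives $b_w(s)\bar w = 0$ in $\gr_V^{\gamma}\iota_{+}\cM$. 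But by property (4), $s + \gamma$ acts nilpotently on $\gr_V^{\gamma}$, so the minimal polynomial of $s$ acting on the (nonzero, finitely generated) module generated by $\bar w$ is a power of $(s + \gamma)$; since $b_w(s)$ annihilates $\bar w$, we need $(s+\gamma)^N \mid b_w(s)$ for suitable $N$, forcing $-\gamma$ to be a root of $b_w$, i.e. $\gamma \in \{\beta_i\}$, contradicting $\gamma < \alpha \leq \beta_i$. Hence $\gamma \geq \alpha$ and $w \in V^{\alpha}\iota_{+}\cM$.

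\medskip
\noindent\textbf{Main obstacle and remarks.} The substantive point — and the part requiring the most care — is Step 1: producing a single polynomial $b(s)$ (equivalently, controlling $b_w(s)$) from the coherence of the $V$-filtration. This uses that $V^\alpha \iota_+\cM$ is $\sD_X\langle s,t\rangle$-coherent (property (2)) together with the fact that $\gr_V^\beta$ is $\sD_X$-coherent with $s+\beta$ nilpotent, and one must check that only finitely many $\beta \geq \alpha$ occur among the relevant graded pieces — which follows from discreteness of the indexing (property (1)) and coherence, since the support of the cokernel $N/\sD_X\langle s,t\rangle\cdot tw$ is a coherent $\sD_X[s]$-module on $\{t=0\}$ annihilated by a power of $\prod(s + \beta)$. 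Step 2 is more formal, relying only on the reduction-mod-$V^{>\gamma}$ trick and the nilpotence axiom (4), together with the observation that $\sD_X\langle s,t\rangle\cdot(tw)\subseteq V^{>\gamma}$. One should also note at the outset that it suffices to treat the case where $f$ acts bijectively (passing to $\cM[1/f]$, using the remark that the two notions of $b_w$ agree on submodules, or arguing locally), and that the whole statement is local on $X$, so one may freely shrink $X$ and choose good filtrations.
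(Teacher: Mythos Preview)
The paper does not give its own proof of this theorem: it is stated with attribution to Sabbah and used as a black box throughout. So there is nothing to compare against; I will simply assess your outline.

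Your Step 2 is correct. The argument that $\sD_X\langle s,t\rangle\cdot tw\subseteq V^{>\gamma}$ and hence $b_w(s)\bar w=0$ in $\gr_V^{\gamma}$, combined with nilpotence of $s+\gamma$, forces $b_w(-\gamma)=0$; this immediately gives $\gamma\geq\alpha$. (A minor slip: you only obtain, and only need, $(s+\gamma)\mid b_w(s)$, not $(s+\gamma)^N\mid b_w(s)$. The contradiction setup is also unnecessary --- you have shown directly that the largest $\gamma$ with $w\in V^{\gamma}$ satisfies $\gamma\geq\alpha$.)

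Step 1 is where the content is, and your ``cleanest route'' has a real gap. You write that $N/tN$ is a coherent $\sD_X[s]$-module and then invoke property (4) to conclude that $\prod_{\beta\geq\alpha}(s+\beta)^{m_\beta}$ kills $\bar w$. But property (4) concerns $\gr_V^{\beta}\iota_+\cM$, not $N/tN$. To transfer it you must use the \emph{induced} filtration $\bar V^{\beta}(N/tN)=\mathrm{image}(V^{\beta}N)$, whose graded pieces are quotients of $\gr_V^{\beta}N\subseteq\gr_V^{\beta}\iota_+\cM$ and hence have $(s+\beta)$ nilpotent. The problem is that this filtration is not a priori \emph{separated}: you need $V^{\beta}N\subseteq tN$ for $\beta\gg 0$, and without this the product $\prod(s+\beta)^{m_\beta}$ only moves $\bar w$ into $\bigcap_\beta\bar V^{\beta}$, which could be nonzero. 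Your ``Main obstacle'' paragraph asserts that this follows from coherence and discreteness, but it does not: discreteness only bounds the number of \emph{jumps}, not the length of the filtration. What is actually needed is precisely the Artin--Rees lemma for the normal element $t$ (note $tR=Rt$ in $R=\sD_X\langle s,t\rangle$) applied to the inclusion $N\subseteq V^{\alpha}\iota_+\cM$ of coherent $R$-modules, using local Noetherianity of $R$. You mention Artin--Rees in passing at the start of Step 1, but then your ``cleanest route'' tries to sidestep it; it cannot be sidestepped --- it is the substance of this direction.
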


\begin{lemma}\label{lem:easy}
For $w \in \iota_{+}\cM$ and $r \in \C$, then $b_w(s)$ divides $(s+r) \cdot b_{(s+r)w}(s)$.
\end{lemma}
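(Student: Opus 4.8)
Proof plan for Lemma 1.11 ($b_w(s)$ divides $(s+r)\cdot b_{(s+r)w}(s)$).

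The plan is to unwind the definitions directly. Let $v = (s+r)w$, and write $b = b_v(s)$ and $P = P(s,t) \in \sD_X\langle s,t\rangle$ for an operator realizing the defining relation $P(s,t)\cdot(t\cdot v) = b_v(s)\cdot v$. The goal is to manufacture from this an operator $Q(s,t)$ with $Q(s,t)\cdot(t\cdot w) = \bigl((s+r)b_v(s)\bigr)\cdot w$, which by minimality of $b_w(s)$ forces $b_w(s) \mid (s+r)b_v(s)$.

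First I would rewrite the relation for $v$ in terms of $w$. Since $t(s+r) = (s+r-1)t$ in $\sD_X\langle s,t\rangle$, we get $t\cdot v = t(s+r)w = (s+r-1)(t\cdot w)$. Hence
\[ P(s,t)\cdot(s+r-1)\cdot(t\cdot w) = b_v(s)\cdot(s+r)\cdot w.\]
Now set $Q(s,t) = P(s,t)\cdot(s+r-1) \in \sD_X\langle s,t\rangle$. Then $Q(s,t)\cdot(t\cdot w) = \bigl((s+r)\,b_v(s)\bigr)\cdot w$, which is exactly a relation of the form \eqref{eqn: definition of b-function} for $w$ with the polynomial $(s+r)b_v(s)$ on the right-hand side (note $(s+r)b_v(s) \in \C[s]$, and one can make it monic by dividing by the leading coefficient, which does not affect divisibility). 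By the defining minimality property of $b_w(s)$ — namely that $b_w(s)$ divides every polynomial $c(s) \in \C[s]$ for which there exists $P' \in \sD_X\langle s,t\rangle$ with $P'(s,t)\cdot(t\cdot w) = c(s)\cdot w$ — we conclude $b_w(s) \mid (s+r)\,b_v(s) = (s+r)\,b_{(s+r)w}(s)$.

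The only point requiring any care is the justification that $b_w(s)$ divides \emph{every} polynomial admitting such a relation, not merely that it is the monic one of smallest degree; this is the standard fact that the set of such polynomials forms an ideal in $\C[s]$ (closed under $\C[s]$-linear combinations, since one can compose with scalar polynomials in $s$ and add operators), whose monic generator is $b_w(s)$. Granting that, the argument is a one-line commutation identity, so I do not anticipate a genuine obstacle here; the lemma is essentially bookkeeping with the relation $st = t(s-1)$. (If one prefers to avoid invoking the ideal structure, one notes $Q$ itself gives a relation with right-hand side $(s+r)b_v(s)$, and divisibility of that particular polynomial by $b_w(s)$ is all that is claimed.)
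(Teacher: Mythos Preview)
Your argument is correct and is essentially identical to the paper's proof, which also starts from the defining equation for $b_{(s+r)w}(s)$ and commutes $t$ past $(s+r)$ to obtain a functional equation for $w$. One small slip: with the paper's convention $st=t(s-1)$ one has $ts=(s+1)t$, hence $t(s+r)=(s+r+1)t$, not $(s+r-1)t$; this does not affect the conclusion, since your $Q(s,t)=P(s,t)(s+r\pm1)$ still yields $Q(s,t)\cdot(t\cdot w)=(s+r)\,b_{(s+r)w}(s)\cdot w$ either way.
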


\begin{proof}
Let $P(t,s)\in \sD_X\langle s,t\rangle$ yielding an equation 
\[P(s,t) \cdot (t\cdot (s+r)w) = b_{(s+r)w}(s) \cdot (s+r)w.  \]
The divisibility readily follows from $t(s+r)=(s+r+1)t$.
\end{proof}

For $k\in \Z_{\geq 0}$ and $r\in \C$, we denote $[s+r]_k := \prod_{i=0}^{k-1} (s+r+i)$.

\begin{lemma}\label{lem:multroot}
Let $w \in \iota_{+}\cM$ and $\alpha \in \Q$. Then 
\[\mult_{s=-\alpha}b_{(s+\alpha)w}(s) = \max\left(\mult_{s=-\alpha}b_w(s)-1,0\right).\]
\end{lemma}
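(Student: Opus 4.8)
The plan is to prove the two inequalities $\mult_{s=-\alpha}b_{(s+\alpha)w}(s)\ge \mult_{s=-\alpha}b_w(s)-1$ and $\mult_{s=-\alpha}b_{(s+\alpha)w}(s)\le \mult_{s=-\alpha}b_w(s)-1$ separately. Write $m:=\mult_{s=-\alpha}b_w(s)$ (so $m\ge 1$ since $b_w(-\alpha)=0$), factor $b_w(s)=(s+\alpha)^m\tilde g(s)$ with $\tilde g(-\alpha)\ne 0$, and set $v:=(s+\alpha)w$. The inequality $\ge$ is immediate from Lemma \ref{lem:easy} with $r=\alpha$: since $b_w(s)$ divides $(s+\alpha)b_v(s)$, comparing orders of vanishing at $s=-\alpha$ gives $m\le 1+\mult_{s=-\alpha}b_v(s)$.

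The content is the reverse inequality, and the main point will be to manufacture, out of the defining equation for $b_w$, an equation for $v$ whose polynomial still has order exactly $m-1$ at $-\alpha$. By Definition \ref{definition: definition of BS polynomials} there is $P(s,t)\in\sD_X\langle s,t\rangle$ with $P(s,t)\cdot(t w)=b_w(s)w$. Using the relation $st=t(s-1)$ to push all $t$'s to the right, write $P(s,t)=\sum_{j=0}^d P_j(s)t^j$ with $P_j(s)\in\sD_X[s]$, and set $h(s):=\prod_{j=0}^d(s+\alpha+1+j)$, so $h(-\alpha)=(d+1)!\ne 0$. The key elementary observation is that the same relation gives $(s+\alpha+1+j)t^j=t^j(s+\alpha+1)$; since $(s+\alpha+1+j)\mid h(s)$ for each $j$, this lets us rewrite
\[ h(s)P(s,t)=\sum_{j=0}^d\bigl(h(s)P_j(s)\bigr)t^j=Q(s,t)\cdot(s+\alpha+1),\qquad Q(s,t):=\sum_{j=0}^d\frac{h(s)}{s+\alpha+1+j}P_j(s)\,t^j\in\sD_X\langle s,t\rangle. \]
Combining this with $(s+\alpha+1)(tw)=t\bigl((s+\alpha)w\bigr)=tv$ yields
\[ (s+\alpha)^{m-1}h(s)\tilde g(s)\cdot v \;=\; h(s)b_w(s)w \;=\; h(s)P(s,t)(tw) \;=\; Q(s,t)(tv). \]
Since the set of $b\in\C[s]$ admitting an equation $P'(s,t)(tv)=b(s)v$ is an ideal with monic generator $b_v(s)$ (standard, and already used implicitly in Lemma \ref{lem:easy}), we conclude $b_v(s)\mid (s+\alpha)^{m-1}h(s)\tilde g(s)$; as $h(-\alpha)\tilde g(-\alpha)\ne 0$, the right-hand side has order exactly $m-1$ at $s=-\alpha$, so $\mult_{s=-\alpha}b_v(s)\le m-1$. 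Together with the first step this gives $\mult_{s=-\alpha}b_{(s+\alpha)w}(s)=m-1=\mult_{s=-\alpha}b_w(s)-1$.

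The only genuinely non-formal ingredient is spotting the multiplier $h(s)$: one needs to force a right factor of $(s+\alpha+1)$ inside $P(s,t)$ at the cost of multiplying by a polynomial that is a unit at $-\alpha$, and this works precisely because $t^j$ conjugates $s+\alpha+1$ to $s+\alpha+1+j$. Everything else is the same manipulation of $b$-functions used for Lemma \ref{lem:easy}. A minor bookkeeping point to spell out in the write-up is the existence of the normal form $P=\sum_j P_j(s)t^j$ and the fact that the relevant set of polynomials is an ideal of $\C[s]$, so that "$b_v$ divides" is legitimate.
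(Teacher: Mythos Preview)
Your proof is correct and follows essentially the same approach as the paper: both use Lemma~\ref{lem:easy} for the inequality $\geq$, and for $\leq$ both multiply the defining functional equation by the product $\prod_{j}(s+\alpha+1+j)$ (the paper writes this as $[s+\alpha+1]_k$) and use $(s+\alpha+1+j)t^j=t^j(s+\alpha+1)$ to factor out $t(s+\alpha)$ on the right. The resulting divisibility $b_{(s+\alpha)w}(s)\mid [s+\alpha+1]_k\cdot b_w(s)/(s+\alpha)$ is identical to yours.
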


\begin{proof}
First, by Lemma \ref{lem:easy}, $b_{w}(s)$ divides $(s+\alpha)b_{(s+\alpha)w}(s)$, so $\mult_{s=-\alpha}b_{(s+\alpha)w}(s) \geq \max\left(\mult_{s=-\alpha}b_w(s)-1,0\right)$. It suffices to establish the other direction.

By definition, there exists some $P(t,s)\in \sD_X\langle s,t\rangle$ such that
\[ P(s,t)\cdot (tw) = b_w(s) \cdot w .\]
Let $P(s,t) = \sum_{i=0}^{k-1} P_i(s) \cdot t^i$, $P_i(s) \in \sD_X[s]$. Multiplying the above by $[s+\alpha+1]_k$ and noticing that (because $st=t(s-1)$)
\[ [s+\alpha+1]_k t^i(tw)=\frac{[s+\alpha+1]_k}{s+\alpha+1+i}(s+\alpha+1+i)t^{i+1}w=\frac{[s+\alpha+1]_k}{s+\alpha+1+i}t^{i+1}(s+\alpha)w,\]
we get
    \[Q \cdot (t(s+\alpha)w) = [s+\alpha+1]_k \cdot b_w(s) \cdot w.\]
where $Q=\left(\sum_{i=0}^{k-1} P_i(s) \frac{[s+\alpha+1]_k}{(s+\alpha+1+i)} \cdot t^i \right)$.
If $b_w(-\alpha)=0$, then by the minimality we have
\[ b_{(s+\alpha)w}(s) \, \mid \,\, [s+\alpha+1]_k \cdot \frac{b_w(s)}{(s+\alpha)}.\]
If $b_w(-\alpha)\neq 0$, then further multiplying $(s+\alpha)$ gives
\[   \left((s+\alpha)Q\right) \cdot (t(s+\alpha)w) = [s+\alpha+1]_k \cdot b_w(s) \cdot (s+\alpha) w,\]
and so by the minimality $b_{(s+\alpha)w}(s) \, \mid \,\, [s+\alpha+1]_k \cdot b_w(s)$. We conclude that $\mult_{s=-\alpha}b_{(s+\alpha)w}(s) \leq \max\left(\mult_{s=-\alpha}b_w(s)-1,0\right)$ and this proves the desired equality.
\end{proof}

\begin{corollary}\label{cor:Nmu}
For every $\alpha\in \mathbb{Q}$ and $\ell \in \Z_{\geq 0}$, we have
    \[ \ker\{(s+\alpha)^\ell:\gr^\alpha_V\iota_+\cM\to \gr^\alpha_V\iota_+\cM\} =\{[w] \in \gr^\alpha_V\iota_+\cM \mid w \in V^{\alpha}\iota_+ \cM, \, \mult_{s=-\alpha}b_{w}(s) \leq \ell\}.\]    
\end{corollary}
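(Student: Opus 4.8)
The plan is to prove the two inclusions separately, and in both directions the bridge between the kernel condition and the $b$-function condition will be Lemma \ref{lem:multroot} together with the characterization of $V^{>\alpha}$ from Theorem \ref{thm: Sabbah}. First observe that for $w \in V^\alpha\iota_+\cM$, the class $[w] \in \gr_V^\alpha\iota_+\cM$ vanishes precisely when $w \in V^{>\alpha}\iota_+\cM$, which by Theorem \ref{thm: Sabbah} means all roots of $b_w(s)$ are $< -\alpha$, i.e. $\mult_{s=-\alpha}b_w(s) = 0$. (When $[w] = 0$ one has the degenerate convention that the $b$-function is $1$, or one argues directly; in any case the statement "$\mult_{s=-\alpha}b_w(s) \le \ell$'' is automatic.) So the content is really about how the order of vanishing of $(s+\alpha)^\ell$ on $[w]$ relates to $\mult_{s=-\alpha}b_w(s)$.

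For the inclusion $\supseteq$: suppose $w \in V^\alpha\iota_+\cM$ with $\mult_{s=-\alpha}b_w(s) =: m \le \ell$. I want $(s+\alpha)^\ell[w] = 0$ in $\gr_V^\alpha\iota_+\cM$, equivalently $(s+\alpha)^\ell w \in V^{>\alpha}\iota_+\cM$. Iterating Lemma \ref{lem:multroot} $m$ times, $\mult_{s=-\alpha}b_{(s+\alpha)^m w}(s) = 0$, so by Theorem \ref{thm: Sabbah} applied at level $\alpha$ (all roots of $b_{(s+\alpha)^m w}$ are $\le -\alpha$ since $(s+\alpha)^m w \in V^\alpha$ by property (2)/(3) of the $V$-filtration and stability of $V^\alpha$ under $s$, and none equals $-\alpha$) we get $(s+\alpha)^m w \in V^{>\alpha}\iota_+\cM$, hence $(s+\alpha)^\ell w \in V^{>\alpha}\iota_+\cM$ as $\ell \ge m$. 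This gives $[w]$ in the kernel.

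For the inclusion $\subseteq$: suppose $[w]$ is in the kernel of $(s+\alpha)^\ell$, i.e. $(s+\alpha)^\ell w \in V^{>\alpha}\iota_+\cM$. Then $\mult_{s=-\alpha}b_{(s+\alpha)^\ell w}(s) = 0$ by Theorem \ref{thm: Sabbah}. Now apply Lemma \ref{lem:easy} repeatedly: $b_{(s+\alpha)^{j}w}(s)$ divides $(s+\alpha)\, b_{(s+\alpha)^{j+1}w}(s)$ for each $j$, so by descending induction on $j$ from $\ell$ down to $0$, $b_w(s) = b_{(s+\alpha)^0 w}(s)$ divides $(s+\alpha)^\ell \, b_{(s+\alpha)^\ell w}(s)$, and therefore $\mult_{s=-\alpha}b_w(s) \le \ell + 0 = \ell$. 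Since $w \in V^\alpha\iota_+\cM$ by hypothesis, this exhibits $[w]$ as lying in the right-hand set. The main subtlety to handle carefully is the well-definedness / independence of the description on the choice of representative $w$ of $[w]$ (two representatives differ by an element of $V^{>\alpha}$, which has $b$-function multiplicity $0$ at $-\alpha$, so the condition $\mult_{s=-\alpha}b_w(s) \le \ell$ is insensitive to this when $\ell \ge 1$; the $\ell = 0$ case is exactly the statement $[w]=0 \iff w \in V^{>\alpha}$ from Theorem \ref{thm: Sabbah}), and I expect that bookkeeping — rather than any deep input — to be the only real obstacle.
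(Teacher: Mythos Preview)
Your proof is correct and follows essentially the same approach as the paper, which simply cites Lemma~\ref{lem:multroot} and Theorem~\ref{thm: Sabbah}; your use of Lemma~\ref{lem:easy} for the $\subseteq$ direction is an equivalent (and equally short) alternative to contraposing Lemma~\ref{lem:multroot}. One small remark: your parenthetical justification of well-definedness (``adding an element of $V^{>\alpha}$ has multiplicity~$0$, so the condition is insensitive'') is not quite a valid argument on its own, but it is also unnecessary---the two inclusions you proved already show that the condition $\mult_{s=-\alpha}b_w(s)\le\ell$ holds for some lift if and only if it holds for every lift.
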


\begin{proof}
    It follows from Lemma \ref{lem:multroot} and Theorem \ref{thm: Sabbah}.
\end{proof}
The next result sharpens \cite[Proposition 6.12] {MPVfiltration} and is crucial for Theorem \ref{thm: Loesers conjecture}.
\begin{prop}\label{prop:sharp}
Let $\cM=\cO_X$. For any $\ell \in \Z_{\geq 0}$, one has
\begin{equation} \label{eq:sharp 1}
b_{1\otimes \d_t^{\ell}}(s)= (s+1) \cdot \tilde{b}_{f}(s-\ell).
\end{equation}
\end{prop}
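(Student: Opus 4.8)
The plan is to compute the Bernstein--Sato polynomial of $w = 1 \otimes \partial_t^\ell \in \iota_+\mathcal{O}_X$ by exploiting the Malgrange isomorphism \eqref{eqn: malgrange isomorphism}, which identifies $1 \otimes \partial_t^\ell$ with $\tfrac{1}{f^\ell}\prod_{j=0}^{\ell-1}(-s+j)\,f^s = (-1)^\ell [s-\ell+1]_\ell \cdot f^{-\ell} f^s$ inside $(\mathcal{O}_X)_f[s]f^s$. The key observation is that $b_w(s)$ is governed, via Sabbah's theorem (Theorem \ref{thm: Sabbah}), by the $V$-filtration, and that applying $\partial_t$ shifts the $V$-filtration degree by $-1$ while the $b$-function behaves predictably under the operators relating $w$, $tw$, and $\partial_t w$. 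Concretely, I would first establish the case $\ell = 0$, i.e.\ $b_{1\otimes 1}(s) = b_f(s) = (s+1)\tilde{b}_f(s)$, which is just the definition recalled in the Remark after Definition \ref{definition: definition of BS polynomials}.

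For the inductive step, I would relate $b_{1\otimes \partial_t^{\ell+1}}(s)$ to $b_{1\otimes\partial_t^\ell}(s)$. Using the module structure formulas in \S\ref{sec: definition of V filtration}, one has $\partial_t(1 \otimes \partial_t^\ell) = 1 \otimes \partial_t^{\ell+1}$, and under $\sigma$ the operator $\partial_t$ corresponds (up to the relation $s = -\partial_t t$) to multiplication that shifts $s \mapsto s$ but changes the power of $f$; more usefully, $t(1\otimes\partial_t^{\ell+1}) = -(s+\ell+1)(1\otimes\partial_t^\ell)$ in $\iota_+\mathcal{O}_X$ from the displayed action of $t$ together with $\sigma$. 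Hence if $P(s,t)\cdot(t\cdot(1\otimes\partial_t^\ell)) = b_{1\otimes\partial_t^\ell}(s)\cdot(1\otimes\partial_t^\ell)$ realizes the $b$-function at level $\ell$, one can manufacture an operator witnessing that $b_{1\otimes\partial_t^{\ell+1}}(s)$ divides $b_{1\otimes\partial_t^\ell}(s+1)$, and conversely, using $\partial_t t = -s$ to push an equation at level $\ell+1$ down to level $\ell$, obtain the reverse divisibility up to the factor coming from $t(1\otimes\partial_t^{\ell+1}) = -(s+\ell+1)(1\otimes\partial_t^\ell)$. Tracking these two divisibilities carefully should yield $b_{1\otimes\partial_t^{\ell+1}}(s) = \dfrac{(s+\ell+2)}{(s+1)}\,b_{1\otimes\partial_t^\ell}(s+1)$ or a similar recursion; substituting the inductive hypothesis $b_{1\otimes\partial_t^\ell}(s) = (s+1)\tilde{b}_f(s-\ell)$ then gives $(s+1)\tilde{b}_f(s-\ell-1)$, as desired. (The exact bookkeeping of which linear factor appears is the thing to get right; comparing with \cite[Proposition 6.12]{MPVfiltration}, which this sharpens, should pin down the normalization.)

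The main obstacle I anticipate is the precise propagation of \emph{multiplicities} of roots, not just the set of roots: the divisibility arguments via Lemma \ref{lem:easy} and Lemma \ref{lem:multroot} control roots and their orders one step at a time, but one must ensure no spurious linear factor is introduced or lost when multiplying through by operators like $[s+\alpha+1]_k$. To handle this cleanly I would work in $(\mathcal{O}_X)_f[s]f^s$ throughout (where $f$ acts invertibly, so the localized $b$-function of $1\otimes\partial_t^\ell$ equals the global one by the Remark), write $w_\ell = (-1)^\ell[s-\ell+1]_\ell f^{-\ell}f^s$ explicitly, and directly compare the minimal operators: an equation $P(s)\cdot f^{s+1} = b_f(s) f^s$ rescales, after conjugating by $f^{-\ell}$ and shifting $s \mapsto s$, to an equation producing $\tilde{b}_f(s-\ell)$ for $f^{-\ell}f^s$ once the $[s+1]$ factor from $t(1\otimes 1)$ versus $f \cdot f^s$ is accounted for; the extra $[s-\ell+1]_\ell$ numerator in $w_\ell$ is a unit in $\C(s)$ and contributes nothing to the minimal monic $b$-function beyond possibly cancelling with factors of $b_f(s-\ell)$ — checking that it cancels exactly the "reduced" part correctly, leaving $(s+1)\tilde{b}_f(s-\ell)$, is the delicate point. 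Minimality (that one cannot do better than this polynomial) would follow from Theorem \ref{thm: Sabbah} by exhibiting that $w_\ell \in V^\alpha$ precisely for $\alpha \le -(\text{largest root})$, i.e.\ from the known $V$-filtration degree of $1\otimes\partial_t^\ell$.
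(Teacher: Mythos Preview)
Your outline contains a computational slip and, more importantly, does not supply the minimality argument that is the entire content of the proposition beyond \cite[Proposition~6.12]{MPVfiltration}.

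First, the identity $t(1\otimes\partial_t^{\ell+1}) = -(s+\ell+1)(1\otimes\partial_t^\ell)$ is incorrect: from $t\partial_t = -s-1$ one gets $t(1\otimes\partial_t^{\ell+1}) = -(s+1)(1\otimes\partial_t^\ell)$. With the correct factor, the relation $b_{tw}(s)=b_w(s+1)$ yields $b_{1\otimes\partial_t^{\ell+1}}(s) = b_{(s+1)\cdot(1\otimes\partial_t^\ell)}(s-1)$, so your induction needs a precise comparison of $b_{(s+1)w}$ with $b_w$. Lemmas~\ref{lem:easy} and~\ref{lem:multroot} give only $b_w(s)\mid(s+1)\,b_{(s+1)w}(s)$ together with control of the multiplicity at the single root $s=-1$; they do not rule out new roots or increased multiplicities of $b_{(s+1)w}$ at other integers, so the exact recursion you posit is not established.

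Second, your plan for minimality via Theorem~\ref{thm: Sabbah} cannot work: Sabbah's description determines only the \emph{largest} root of $b_w(s)$ from the $V$-degree of $w$, not any multiplicities. The divisibilities $\tilde{b}_f(s-\ell)\mid b_{1\otimes\partial_t^\ell}(s)\mid (s+1)\,\tilde{b}_f(s-\ell)$ are exactly \cite[Proposition~6.12]{MPVfiltration}; the sharpening here is the lower bound $\mult_{s=-1}b_{1\otimes\partial_t^\ell}(s)\geq 1+\mult_{s=-\ell-1}\tilde{b}_f(s)$, and your ``normalization'' remark does not address it. The paper obtains this bound by passing to $w_\ell=(-t)^\ell(1\otimes\partial_t^\ell)=[s+1]_\ell\,f^s$, localizing $\sD_X[s]$ at the prime $(s+\ell+1)\subset\C[s]$, and analyzing a minimal equation of the form $(s+\ell+1)^\nu f^s = P\cdot(s+\ell+1)f^{s+1}+Q\cdot f^{s+\ell+1}$ directly: writing $Q=q(x,s)+\sum_i Q_i\partial_i$, absorbing the derivative terms, and evaluating at $s=-\ell-1$ forces first $\nu\geq 1$ (since $f^{-\ell-1}\notin\cO_X$) and then, after dividing by $(s+\ell+1)$, the inequality $\nu-1\geq\mult_{s=-\ell-1}b_f(s)$. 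This evaluation step is the genuinely new idea, and something of this shape is needed.
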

\begin{proof}
We may as well assume $\ell > 0$. Applying $t^\ell$ to $1\otimes \d_t^{\ell}$ and using $b_{t\cdot w}(s)=b_w(s+1)$ for any $w\in\iota_{+}\cO_X$, \eqref{eq:sharp 1} is equivalent to
\[ b_{w_\ell}(s) = \tilde{b}_f(s) (s+ \ell+1) \]
where $w_\ell= (-t)^\ell (1 \otimes \partial_t^\ell) = [s+1]_{\ell} \, \cdot f^s$ under the isomorphism \eqref{eqn: malgrange isomorphism}; here $\cO_X$ injects into the localization $(\cO_X)_f$.  It follows from \cite[Proposition 6.12]{MPVfiltration} that
\[ \tilde{b}_f(s)\mid b_{w_\ell}(s) \mid \tilde{b}_f(s) (s+ \ell+1).\]
It remains to prove 
\[\mult_{s=-\ell-1}b_{w_\ell}(s)\geq \mult_{s=-\ell-1}\tilde{b}_f(s)+1=\mult_{s=-\ell-1}b_f(s)+1.\]
Let $\sD_X[s]_{(s+\ell+1)}$ be the localization of $\sD_X[s]$ at the ideal $(s+\ell+1) \subseteq \C[s]$. Localizing (\ref{eqn: definition of b-function}) for $w_{\ell}$ and expanding $P(s, t)=\sum_i P_i(s)\cdot t^i$ with $P_i(s)\in \sD_X[s]$, we see that $\mult_{s=-\ell-1}b_{w_\ell}(s)$ can be characterized as the smallest $\nu$ such that 
\begin{align*}
(s+\ell+1)^\nu \cdot f^s &\in \sum_{i=1}^\ell \sD_X[s]_{(s+\ell+1)} (s+\ell+1) f^{s+i}  \,\, + \,\, \sum_{j=\ell+1}^\infty \sD_X[s]_{(s+\ell+1)} f^{s+j}\\
&=\sD_X[s]_{(s+\ell+1)} (s+\ell+1) f^{s+1} + \sD_X[s]_{(s+\ell+1)}f^{s+\ell+1}.\end{align*}
We work with local coordinates $x_i$ and partial derivatives $\partial_i$. Consider an equation 
\[(s+\ell+1)^\nu f^s = P \cdot (s+\ell+1)f^{s+1} + Q \cdot f^{s+\ell+1},\]
with $P,Q \in \sD_X[s]_{(s+\ell+1)}$. Write $Q = q(x,s)+ \sum_i Q_i \cdot \d_i$, with $q(x,s) \in \cO_X[s]_{(s+\ell+1)}$ and $Q_i \in \sD_X[s]_{(s+\ell+1)}$. We obtain an equation of the form
\[(s+\ell+1)^\nu f^s = P' \cdot (s+\ell+1)f^{s+1} + q(x,s) \cdot f^{s+\ell+1},\] 
with $P' \in \sD_X[s]_{(s+\ell+1)}$. Evaluating at $s=-\ell-1$, we must have $\nu \geq 1$, as the right-hand side is a holomorphic function (without poles) whereas $f^{-\ell-1}$ is not, and hence $(s+\ell+1) \mid q(x,s)$. Thus, after dividing by $(s+\ell+1)$ we can write
\[(s+\ell+1)^{\nu-1} f^s =P'' \cdot f^{s+1},\]
with $P'' \in \sD_X[s]_{(s+\ell+1)}$. We conclude that $\nu\geq \mult_{s=-\ell-1}b_f(s)+1$.
\end{proof}

\begin{remark}
   Using the proof of the first divisibility in \cite[Proposition 6.12]{MPVfiltration}, one can get an explicit operator yielding the $b$-function of $1\otimes \d_t^{\ell}$ if one knows such for $b_f(s)$. After we posted the first version of this paper on \texttt{arXiv}, M. Musta\c{t}\u{a} pointed out that Proposition \ref{prop:sharp} is also proved in  \cite[Thoerem 10.17]{Vfiltrationintroduction}.
\end{remark}

\section{Complex Hodge modules and polarizations} \label{sec:MHM}

In this section, we briefly review the theory of complex Hodge modules from \cite[\S14]{MHMproject} and study the positivity properties of the polarization on the lowest piece of the Hodge filtration.

\subsection{Polarized Hodge modules}

The basic objects in the theory of Hodge modules are the \emph{polarized Hodge modules}. A complex polarized Hodge module on a complex manifold $X$ consists of the following four pieces of data:
\begin{enumerate}
	\item A regular holonomic left $\sD_X$-module $\cM$.
	\item An increasing filtration $F_{\bullet} \cM$, called the \emph{Hodge filtration}, by coherent $\sO_X$-submodules.
		This filtration is assumed to be good, which means that: it is exhaustive; $F_k
		\cM = 0$ for $k \ll 0$ locally on $X$; and one has $F_k \sD_X \cdot F_{\ell}
		\cM \subseteq F_{k+\ell} \cM$, with equality for $k \gg 0$ locally on $X$.
	\item A pairing $S:\cM \otimes_{\C} \overline{\cM} \to
		\Db_X$, called the \emph{polarization}, valued in the sheaf of distributions on $X$. Here $S$ is assumed to be Hermitian and $\sD_X$-linear in its first argument (and
		therefore conjugate linear in its second argument).
	\item An integer $w \in \Z$, called the \emph{weight}.
\end{enumerate}
The tuple $(\cM, F_{\bullet} \cM, S, w)$ is said to be a \emph{polarized Hodge module} if it satisfies
several additional conditions that are imposed on the nearby and vanishing cycle
functors with respect to holomorphic functions on open subsets of $X$ \cite[Definition 14.2.2]{MHMproject}. Note that the weight only enters into this definition through a sign condition for the polarization $S$: we have that $(\cM, F_\bullet \cM, S, w)$ is a polarized Hodge module if and only if $(\cM, F_\bullet \cM, (-1)^w S, 0)$ is one.

\begin{remark}
Since we work here with \emph{left} $\sD$-modules, the sheaf $\Db_X$ of distributions is defined to be dual to the cosheaf of compactly supported smooth top forms; with this definition $\Db_X$ is a left module over $\sD_X \otimes \overline{\sD}_X$. This is a slightly different notion than in the introduction, where we considered distributions as dual to compactly supported smooth functions.
\end{remark}

To illustrate the conventions, let us spell out the definition when $X$ is a point. In this case, a regular holonomic $\sD_X$-module is nothing but a finite dimensional vector space $V$, and a distribution-valued Hermitian form is simply a Hermitian form on $V$ in the usual sense. By definition, a tuple $(V, F_\bullet V, S, w)$ defines a polarized Hodge module on a point (a.k.a., a polarized Hodge structure) if and only if
\[ V = \bigoplus_{p + q = w} V^{p, q} \quad \text{where} \quad V^{p, w - p} = F_{-p} V \cap (F_{-p - 1} V)^\perp\]
and $S|_{V^{p, q}}$ is $(-1)^q$-definite for all $p, q$. Here the orthogonal complement is taken with respect to $S$.

Let us briefly comment on the relation between this complex theory and Saito's original notion of polarized Hodge module \cite{Saito88} with $\Q$-coefficients; we refer the reader to \cite[Appendix]{DV22} for a more detailed explanation. For Saito, polarized Hodge modules are defined to be certain special tuples $(\mc{M}, F_\bullet \mc{M}, K, S_{\mb{Q}}, \sigma)$, where $\mc{M}$ and $F_\bullet \mc{M}$ are as above, $K$ is a $\mb{Q}$-perverse sheaf, $S_{\mb{Q}} \colon K \otimes K \to \mb{Q}_X[n]$ is a (skew-)symmetric bilinear form and $\sigma \colon K \otimes_{\mb{Q}}\mb{C} \cong \mathrm{DR}(\mc{M})$ is an isomorphism of complex perverse sheaves. In the complex version of the theory, one writes the rational polarization $S_\mb{Q}$ as the real or imaginary part of a unique Hermitian form
\[ S_\mb{C} \colon (K \otimes_{\mb{Q}} \mb{C}) \otimes \overline{(K \otimes_{\mb{Q}} \mb{C})} \to \mb{C}_X[n], \]
which may then be identified with a Hermitian form
\[ S \colon \mc{M} \otimes \overline{\mc{M}} \to \Db_X\]
via Kashiwara's isomorphism \cite{Kas86}: \[\mb{D}\overline{\mathrm{DR}(\mc{M})} \cong \mathrm{DR}\left(\shom_{\overline{\sD}_X}(\overline{\mc{M}}, \Db_X)\right).\]

One of the main ideas of \cite{MHMproject} is that, from this perspective, the $\mb{Q}$-structure of $K$ plays only an auxiliary role, and the entire theory of Hodge modules can be developed without it.

\begin{remark}
Beyond the emphasis on perverse sheaves versus $\sD$-modules, the main technical difference between $\mb{Q}$-Hodge modules as defined by Saito and complex Hodge modules as defined by Sabbah-Schnell is that the former must always have quasi-unipotent monodromy (since this is true for any polarized variation of $\mb{Q}$-Hodge structure), while the latter need only have monodromy whose generalized eigenvalues are of absolute value $1$: this amounts to allowing $V$-filtrations indexed by $\mb{R}$ instead of $\mb{Q}$. Incorporating such $\sD$-modules into the theory requires a generalization of Schmid's Nilpotent Orbit Theorem: see \cite{sabbah2022degeneratingcomplexvariationshodge} for details. This greater generality is not needed for our purposes, however, since all $\sD$-modules appearing in this paper are built functorially from $\mc{O}_X$ and hence have quasi-unipotent monodromy.
\end{remark}

Despite the complicated definition, polarized Hodge modules turn out to be very natural objects: by the Structure Theorem \ref{thm: structure theorem} below, they are precisely the intermediate extensions of polarized variations of Hodge structure on locally closed analytic subsets. To state the theorem precisely, we first recall what this means.

First, suppose that $i : Z \to X$ is the inclusion of a smooth closed analytic subset $Z \subseteq X$ and suppose that $(\mc{V}, F_\bullet \mc{V}, S)$ is a polarized complex variation of Hodge structure on $Z$. Recall that the pushforward of the $\sD_Z$-module $\mc{V}$ is defined by
\[ i_+ \mc{V} = i_*(\sD_{X \leftarrow Z} \otimes_{\sD_Z} \mc{V}),\]
where $\sD_{X \leftarrow Z} = \mathrm{Diff}(i^{-1}\omega_X, \omega_Z) \cong i^{-1}\sD_X \otimes_{i^{-1}\mc{O}_X} \omega_{Z/X}$. In particular, $i_+\mc{V}$ is generated over $\sD_X$ by the subsheaf $i_*(\mc{V} \otimes \omega_{Z/X})$. We endow $i_+\mc{V}$ with a Hodge filtration by
\begin{equation}\label{eqn: convention of Hodge filtration on the embedding} F_p i_+\mc{V} \colonequals \sum_{p_1 + p_2  + c = p } F_{p_1} \sD_X \cdot i_*(F_{p_2} \mc{V} \otimes \omega_{Z/X}),\end{equation}
where $c = \codim_X Z$. Note that the shift by $c$ is a feature of the Hodge filtration on left Hodge modules. Similarly, we endow $i_+\mc{V}$ with a polarization  $i_+S$ defined by the formula
\begin{equation} \label{eq:polarization pushforward}
 \langle \varphi, i_+S(v \otimes \xi, v' \otimes \xi') \rangle = (-1)^{c(c - 1)/2}(2 \pi \sqrt{-1})^c\int_Z (\xi \contract \overline{\xi'} \contract \varphi)|_Z \cdot S(v, v')
\end{equation}
for a smooth compactly supported top form $\varphi$ on $X$, and $v \otimes \xi$, $v'\otimes \xi' \in i_*(\mc{V} \otimes \omega_{Z/X})$, extended by linearity to $i_+\mc{V}$. Here we write $\langle -,-\rangle$ for the pairing between forms and distributions and $\contract$ for the interior product. This defines a tuple $(i_+\mc{V}, F_\bullet i_+\mc{V}, i_+S)$ on $X$.

Let us briefly comment on the sign in \eqref{eq:polarization pushforward}. Let us say that $\varphi$ is a \emph{local volume form} if at every point $x \in X$, $\varphi$ is either zero or a volume element on the tangent space compatible with the canonical orientation; here we have fixed a square root of $-1$ and hence an orientation on the complex manifold $X$. The sign in \eqref{eq:polarization pushforward} is arranged so that if $\varphi$ is a local volume form on $X$ and $\xi \in \omega_{Z/X}$ is a holomorphic section then
\[ (-1)^{c(c-1)/2} (2\pi \sqrt{-1})^c (\xi \contract \overline{\xi} \contract \varphi)|_Z\]
is a local volume form on $Z$. 

Let us now suppose that $Z \subseteq X$ is an arbitrary closed analytic subset (not necessarily smooth) and let $U \subseteq Z$ be a dense, smooth, Zariski-open subset. Write $W = Z \setminus U$ and $i : U \to X \setminus W$ for the inclusion into $X \setminus W$. We say that a polarized Hodge module $(\mc{M}, F_\bullet \mc{M}, S)$ is an \emph{intermediate extension} of a polarized variation of Hodge structure $(\mc{V}, F_\bullet \mc{V}, S_U)$ on $U$ if $\mc{M}$ has strict support $Z$ (i.e., every sub or quotient of the $\sD_X$-module $\mc{M}$ has support $Z$) and
\[ (\mc{M}, F_\bullet \mc{M}, S)|_{X \setminus W} = (i_+\mc{V}, F_\bullet i_+\mc{V}, i_+S_U).\]
Note that while the polarization $S$ is uniquely determined by its restriction to $X \setminus W$ by \cite[Corollary 12.5.41]{MHMproject}, there are generally many ways to extend $F_\bullet i_+\mc{V}$ to a filtration on $\mc{M}$; the assumption that $(\mc{M}, F_\bullet\mc{M}, S)$ is a polarized Hodge module, however, fixes this extension uniquely. The last assertion follows from the strict $\mathbb{R}$-specializability of $(\cM,F_{\bullet}\cM)$ along any locally defined holomorphic function \cite[Definition 7.2.19 and Definition 9.4.1]{MHMproject}, see also \cite[Exercise 11.2]{Schnelloverview}.

The following is one of the main theorems of the theory of complex Hodge modules \cite[Theorem 16.2.1]{MHMproject}, which is the complex version of \cite[Theorem 0.2]{Saito90}.

\begin{thm}[Structure theorem]\label{thm: structure theorem}
In the setting above, every polarized variation of Hodge structure $(\mc{V}, F_\bullet \mc{V}, S_U)$ of weight $w - \dim Z$ on $U$ has a unique intermediate extension to a polarized Hodge module of weight $w$ on $X$. Moreover, every polarized Hodge module of weight $w$ on $X$ with strict support $Z$ is obtained in this way for some $U$, $\mc{V}$.
\end{thm}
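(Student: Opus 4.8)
The plan is to follow Saito's strategy \cite{Saito90}, transposed to complex coefficients as in \cite{MHMproject}, via a simultaneous induction on $\dim X$ that also establishes stability of polarized Hodge modules under projective direct images and under the nearby and vanishing cycle functors (both of which are needed in the inductive step). \emph{Uniqueness} and the \emph{reverse direction} are the formal part. By definition a polarized Hodge module $\mathcal{M}$ with strict support $Z$ has no nonzero $\sD_X$-sub or quotient supported on $W = Z \setminus U$; since morphisms of Hodge modules are strict for the Hodge filtration, this forces $\mathcal{M}$ to be recovered from its restriction to $X \setminus W$ by the intermediate extension functor $j_{!*}$ (realized at the filtered level as the image of the filtered $!$-extension inside the filtered $*$-extension). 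Hence an intermediate extension of a given polarized variation of Hodge structure is unique if it exists. The second assertion then follows immediately: for $\mathcal{M}$ with strict support $Z$, the defining conditions force $\mathcal{M}|_U$ to be a polarized variation of Hodge structure on the smooth dense locus $U \subseteq Z$, and $\mathcal{M}$ agrees with the intermediate extension of $\mathcal{M}|_U$ over $X \setminus W$, hence everywhere.

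For existence, the first step is resolution of singularities: choose a projective birational $\pi \colon X' \to X$, an isomorphism over $X \setminus W$, with $X'$ smooth and the strict transform $Z'$ of $Z$ smooth with $Z' \setminus \pi^{-1}(U)$ a normal crossing divisor in $Z'$. On $X'$ the problem reduces, via the closed embedding $Z' \hookrightarrow X'$, to extending a polarized variation of Hodge structure across a normal crossing divisor inside a smooth variety: one builds the Hodge filtration, $V$-filtration and polarization on the iterated nearby cycles out of the limit mixed Hodge structures, using the multivariable $\mathrm{SL}_2$-orbit and nilpotent orbit theorems of Cattani--Kaplan--Schmid in the complex-coefficient form of \cite{sabbah2022degeneratingcomplexvariationshodge}, and checks the inductive axioms on $\psi$ and $\phi$. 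Finally one pushes forward: by the decomposition theorem for polarized Hodge modules under projective maps (the complex analog of Saito's theorem, part of \cite{MHMproject}), $\pi_+$ of this pure Hodge module splits by strict support, and the summand with strict support $Z$ is the desired intermediate extension on $X$; it restricts correctly over $X \setminus W$ because $\pi$ is an isomorphism there.

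The genuinely hard input is the analytic one near the boundary: proving that a polarized complex variation of Hodge structure degenerating along a normal crossing divisor produces, through its limit data, an object satisfying \emph{all} the nearby/vanishing-cycle axioms of a polarized Hodge module. This is exactly where Schmid's $\mathrm{SL}_2$-orbit theorem and its several-variable refinements enter, and where the passage from $\mathbb{Q}$- to $\mathbb{C}$-coefficients (monodromy eigenvalues on the unit circle rather than roots of unity) requires work beyond Saito's. In the applications of this paper all relevant $\sD$-modules are quasi-unipotent, so one could alternatively deduce the statement from Saito's $\mathbb{Q}$-coefficient theorem together with a descent argument for the $\mathbb{C}$-structure; but the route above is what yields Theorem \ref{thm: structure theorem} as stated. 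The remaining difficulty, namely keeping $\pi_+$ and $\psi$, $\phi$ inside the category of polarized Hodge modules at every stage of the induction, is substantial but essentially formal once the one-variable and normal-crossing cases are in place.
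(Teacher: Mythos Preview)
The paper does not prove this theorem: it is quoted as \cite[Theorem 16.2.1]{MHMproject} (the complex version of \cite[Theorem 0.2]{Saito90}), with only a brief remark that the proof relies on the Hodge--Zucker theorem, the Nilpotent Orbit Theorem, and Saito's arguments. Your sketch is a faithful outline of the strategy in those references---induction on dimension, reduction to the normal crossing case via resolution, the asymptotic analysis of Schmid/Cattani--Kaplan--Schmid (and its complex-coefficient version in \cite{sabbah2022degeneratingcomplexvariationshodge}), and pushforward along a projective morphism using the decomposition theorem---so there is nothing to compare against in the paper itself beyond the citation.
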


If $j : U \to X$ denotes the inclusion into $X$, we write $j_{!*}(\mc{V}, F_\bullet\mc{V}, S_U) = (j_{!*}\mc{V}, F_\bullet j_{!*}\mc{V}, j_{!*}S_U)$ for the unique intermediate extension.

In the case $U = X$, Theorem \ref{thm: structure theorem} says in particular that every polarized variation of Hodge structure $\mc{V}$ of weight $w - \dim X$ on $X$ is a polarized Hodge module of weight $w$. This is a highly non-trivial statement even in the case $\mc{V} = \mc{O}_X$; the proof relies on, for example, the Hodge--Zucker Theorem, the Nilpotent Orbit Theorem and many delicate arguments due to Saito. 

We now state and prove the precise form of Theorem \ref{thm:intro lowest piece is positive} from the introduction.

\begin{thm} \label{thm:lowest piece is positive}
Let $(\mc{M}, F_\bullet \mc{M}, S)$ be a polarized Hodge module of weight $w$ on $X$. Assume that $F_{p - 1}\mc{M} = 0$ and let $m\in F_p\mc{M}$. Then for any local volume form $\varphi$, we have
\begin{equation} \label{eq:lowest piece is positive 1}
 (-1)^{p + w - \dim X}\langle \varphi, S(m, m) \rangle \geq 0,
\end{equation}
with equality if and only if $\varphi$ vanishes identically on the support of $m$.
\end{thm}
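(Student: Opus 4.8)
The plan is to reduce to the case of a single polarized variation of Hodge structure on a smooth locally closed subset, where the positivity is essentially a direct computation from the definitions, and then transport the result along the intermediate-extension description supplied by the Structure Theorem \ref{thm: structure theorem}. First I would use the fact that $(\mc{M}, F_\bullet\mc{M}, S, w)$ is a polarized Hodge module if and only if $(\mc{M}, F_\bullet\mc{M}, (-1)^w S, 0)$ is, so we may as well normalize $w = 0$ and prove $(-1)^{p - \dim X}\langle \varphi, S(m,m)\rangle \geq 0$. Next, since the statement is local on $X$ and we may decompose $\mc{M}$ into its pieces with strict support (a polarized Hodge module is a direct sum of such, and the lowest-weight hypothesis $F_{p-1}\mc{M} = 0$ is inherited by each summand), I would assume $\mc{M} = j_{!*}\mc{V}$ has strict support $Z$, where $j : U \hookrightarrow X$ is the inclusion of a dense smooth Zariski-open $U \subseteq Z$ and $\mc{V}$ is a polarized variation of Hodge structure of weight $-\dim Z$ on $U$.

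The main point is then local analysis near a generic point of $Z$, i.e. over the open set $X \setminus W$ where $\mc{M} = i_+\mc{V}$ with $i : U \to X \setminus W$ the closed embedding of a smooth subvariety. Here I would argue that the hypothesis $F_{p-1}\mc{M} = 0$ forces $p = c - \dim Z = \codim_X Z - \dim Z$ wait — more carefully: by the formula \eqref{eqn: convention of Hodge filtration on the embedding}, the Hodge filtration on $i_+\mc{V}$ over $X \setminus W$ starts at the level $p_0 = c + p_{\mathcal V}^{\min}$, where $c = \codim_X Z$ and $p_{\mathcal V}^{\min}$ is the bottom of the Hodge filtration on $\mc{V}$, and $F_{p_0} i_+\mc{V} = i_*(F_{p_{\mathcal V}^{\min}}\mc{V} \otimes \omega_{Z/X})$. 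So over $X \setminus W$, a section $m \in F_p\mc{M}$ with $F_{p-1}\mc{M} = 0$ is of the form $v \otimes \xi$ with $v$ a section of the lowest Hodge piece $\mc{V}^{p_{\mathcal V}^{\min}, q}$ (with $p_{\mathcal V}^{\min} + q = -\dim Z$) and $\xi \in \omega_{Z/X}$. Plugging into the defining formula \eqref{eq:polarization pushforward} for $i_+S$ and using the sign discussion right after it, the quantity $(-1)^{c(c-1)/2}(2\pi\sqrt{-1})^c (\xi \contract \overline{\xi}\contract\varphi)|_Z$ is a local volume form on $Z$ (nonnegative, zero exactly where $\varphi|_Z$ degenerates), while $S(v,v)$ on the lowest Hodge piece is $(-1)^q$-definite by the definition of a polarized variation of Hodge structure. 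A bookkeeping check that $(-1)^q = (-1)^{-\dim Z - p_{\mathcal V}^{\min}} = (-1)^{\dim X - p_0}$ (using $p_0 = c + p_{\mathcal V}^{\min}$ and $c = \dim X - \dim Z$) then yields \eqref{eq:lowest piece is positive 1} with $w = 0$, together with the stated equality condition, over $X \setminus W$.

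It remains to remove the genericity assumption, i.e. to extend the inequality across the bad locus $W$ and over arbitrary strata. For this I would use the key structural fact that $F_p j_{!*}\mc{V}$, being the lowest piece of the Hodge filtration, is generated in the appropriate sense by the open part: more precisely, the identity $\langle \varphi, S(m,m)\rangle \geq 0$ (with the sign $(-1)^{p-\dim X}$) is a statement about a distribution on $X$, and since both sides are continuous in $\varphi$ and $X \setminus W$ is dense, it suffices to know that $S(m,m)$ has no mass on $W$ — equivalently that the distribution $S(m,m)$ is the one obtained by extending by continuity its restriction to $X \setminus W$. This is exactly the content of the fact that for the intermediate extension the polarization extends uniquely as a distribution with no extra support on the boundary, which is part of the admissibility package for polarized Hodge modules (the $L^2$/growth estimates near $W$ coming from the Nilpotent Orbit Theorem and Cattani--Kaplan--Schmid type estimates). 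The hard part will be pinning down this last step rigorously: one must argue that no boundary contribution to $S(m,m)$ can appear, and that $m \in F_p\mc{M}$ globally really is, near $W$, controlled by sections of the form $v \otimes \xi$ on the open part with the requisite $L^2$ behaviour — so I would phrase it as: reduce to a curve transverse to a component of $W$ (or to the normal-crossing model of $Z$), invoke the one-variable $\mathrm{SL}_2$-orbit/norm estimate to see that $S(m,m)$ is locally integrable with nonnegative density near $W$, and conclude by density of test forms supported in $X \setminus W$. The equality clause follows because strict vanishing of $\langle\varphi, S(m,m)\rangle$ over the dense open $X \setminus \supp(m)$ already forces $\varphi|_{\supp(m)} \equiv 0$ by the open-part computation, and conversely such $\varphi$ kills the distribution since it has no support off $\supp(m)$.
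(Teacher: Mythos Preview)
Your reduction to the strict-support case and the computation over the open locus $X\setminus W$ are correct and match the paper. The divergence is entirely in how you extend across $W$, and this is where your argument has a genuine gap.

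You propose to show that the distribution $S(m,m)$ ``has no mass on $W$'' by invoking norm estimates (nilpotent orbit, $\mathrm{SL}_2$-orbit, CKS) after reducing to a transverse curve. But ``no mass on $W$'' is not a well-defined condition for a general distribution, and even if one interprets it as local integrability of the open-part density, you have not explained why the specific section $m\in F_p j_{!*}\mc{V}$ restricts, near $W$, to something whose pairing $S_U(v,v)$ is controlled by the Hodge-norm estimates. The reduction to a curve is also problematic: $W$ can have arbitrary codimension in $Z$, and restricting a Hodge module to a curve does not obviously preserve the ``lowest piece'' hypothesis. Finally, even granting local integrability, you still need to identify the \emph{given} distribution $S(m,m)$ on $X$ with the extension of its open restriction; a priori these could differ by something supported on $W$.

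The paper sidesteps all of this with a different mechanism. After shrinking so that $W\subseteq D=f^{-1}(0)$ for some holomorphic $f$, one observes that $m\otimes 1$ lies in the lowest Hodge piece of $\iota_+\mc{M}$, and Saito's formula $F_{p+1}\iota_+\mc{M}\subseteq V^{>0}\iota_+\mc{M}$ (from \cite[3.2.1]{Saito88}) forces the $b$-function of $m$ to have no roots $\geq 0$. Bernstein's argument then shows that the zeta function $Z_{m,m}(\varphi;s)=\int_U |f|^{2s}\varphi_U$ has no poles in $\Re s\geq 0$, and a separate convergence lemma (Lemma~\ref{lem:convergence}) upgrades this to absolute convergence of the integral at $s=0$. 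The identification $S(m,m)=Z_{m,m}(0)$ is made via Kashiwara's theorem that $\shom_{\overline{\sD}_X}(\overline{\mc{M}},\Db_X)$ has the same strict support as $\mc{M}$: the difference $Z_{\bullet,m}(0)-S(\bullet,m)$ is a $\sD_X$-linear map to $\Db_X$ vanishing on $X\setminus D$, hence vanishes everywhere. This replaces your entire ``extension across $W$'' step with $V$-filtration bookkeeping and a soft convergence argument, and never touches asymptotic norm estimates.
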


\begin{proof}
By \cite[Theorem 14.2.19]{MHMproject} cf., \cite[\S 5.1.6]{Saito88}, we may decompose $(\mc{M}, F_\bullet\mc{M}, S)$ as a direct sum of polarized Hodge modules with strict support. Since the sign $(-1)^{p + w - \dim X}$ is uniform across all summands, we may therefore assume without loss of generality that $\mc{M}$ itself has strict support $Z \subseteq X$. Then by Theorem \ref{thm: structure theorem}, $(\mc{M},S)=j_{!\ast}(\cV,S_U)$ where $(\cV,S_U)$ is a polarized complex VHS on a smooth Zariski-open subset $U\subseteq Z$ and $j:U\hookrightarrow X$ is the inclusion. Shrinking $U$ and $X$ if necessary (note that the statement is local on $X$), we may assume that $U = (X \setminus D) \cap Z$ for some divisor $D = f^{-1}(0) \subseteq X$ given by the zero locus of a holomorphic function $f : X \to \mb{C}$. In this case, we can compute the polarization $S = j_{!*}S_U$ as follows.

Consider the graph embedding $\iota : X \to X \times \mb{C}$ of $f$. Since $m \in F_p\mc{M}$ is in the lowest piece of the Hodge filtration, we have that
\[ m \otimes 1 \in F_{p + 1}\iota_+\mc{M}\]
also lies in the lowest piece. It follows from \cite[3.2.1]{Saito88} and the fact that $\mc{M}$ is the intermediate extension of $\mc{V}$ that the lowest piece of the Hodge filtration on $\iota_+\mc{M}$ satisfies
\[ F_{p + 1}\iota_+\mc{M} = V^{>0}\iota_+\mc{M} \cap j'_*(F_{p + 1}\iota|_{X \setminus D, +}(i_+\mc{V})) \subseteq V^{>0}\iota_+\mc{M},\]
where $i : U \to X \setminus D$ and $j' : X \setminus D \to X$ are the inclusions. Hence, the $b$-function $b_m(s)\colonequals b_{m\otimes 1}(s)$ has no roots $\geq 0$ by Theorem \ref{thm: Sabbah}.

To prove the theorem, we need to compute the distribution $S(m, m)$. First, observe that when restricted to any fixed compact support, the distribution $S(m, m)$ will have some finite order $k$, i.e.\ it makes sense to test $S(m, m)$ against $C^k$ test forms with this support. Since the function $|f|^{2s}$ and its $s$-derivative $2\log |f| \cdot |f|^{2s}$ are $C^k$ for $2 \Re s > k$, for any smooth test form $\varphi$ on $X$ the function
\[ \langle \varphi, Z_{m, m}(s)\rangle \colonequals \langle \varphi|f|^{2s}, S(m, m) \rangle \]
is therefore well-defined and holomorphic for $2 \Re s > k$. Note also that if all partial derivatives of $\varphi$ up to order $j$ vanish along $D$, then $\varphi|f|^{2s}$ and its $s$-derivative are $C^k$ for $2 \Re s > k - j$, so $\langle \varphi, Z_{m, m}(s)\rangle$ extends to a holomorphic function on this larger domain.

By Definition \ref{definition: definition of BS polynomials} and the isomorphism \eqref{eqn: malgrange isomorphism}, 
we can choose $P(s) \in \sD_X[s]$  such that 
\begin{equation}\label{eqn: functional eqn of bm(s)} P(s) \cdot (mf^{s + 1}) = b_m(s)(mf^s).\end{equation}
We claim that for $\Re s\gg 0$, we have
\begin{equation}\label{eqn: mero ext of Zmm}
b_m(s) \cdots b_m(s + k) \langle \varphi, Z_{m, m}(s) \rangle = \langle \varphi P(s) \cdots P(s + k)f^{k + 1} |f|^{2s}, S(m, m)\rangle.
\end{equation}
More generally, if $Q(s) \in \sD_X[s]$ is an operator of order $\leq \ell$, i.e.\ $Q(s)\in (F_{\ell}\sD_X)[s]$, we claim that for $\Re s \gg 0$
\begin{equation}\label{eqn: action of Q(s)} \langle \varphi Q(s)|f|^{2s}, S(m, m)\rangle = \sum_i s^i\langle \varphi|f|^{2(s - \ell)}, S(f^\ell m_i, f^\ell m) \rangle,\end{equation}
where we write
\[ Q(s)mf^s = \sum_i s^i m_i f^s\]
and note that $f^\ell m_i \in \mc{M}$ since $Q(s)$ has order at most $\ell$. Note that if \eqref{eqn: action of Q(s)} holds for operators $Q(s)$ and $R(s)$, then it holds also for $Q(s)R(s)$, so it suffices to check the claim when $Q(s)$ has order $\leq 1$. When $Q(s)$ is a holomorphic function this is obvious, so we suppose $Q(s) = \xi$ is a vector field. Recall from \S \ref{sec: definition of V filtration} that
\[ \xi(mf^s)=\xi(m)f^s+s\frac{\xi(f)}{f}mf^s.\]
Then by $\sD_X \otimes \overline{\sD_X}$-linearity of $S$, we have
\begin{align*}
 \langle \varphi \xi |f|^{2s}, S(m, m) \rangle &= \langle \varphi |f|^{2s}\xi + s\varphi |f|^{2(s - 1)}\xi(f) \bar{f}, S(m, m) \rangle \\
 &= \langle \varphi|f|^{2(s - 1)}, S\left(f\xi(m), f m\right)\rangle  + s\left\langle \varphi |f|^{2(s - 1)}, S\left(f\frac{\xi(f)}{f}m, f m\right)\right\rangle.
\end{align*}
Therefore, the claimed equality \eqref{eqn: action of Q(s)} holds. Now we apply the claim to the operator $Q(s) = P(s)P(s + 1) \cdots P(s + k) f^{k + 1}$. Assume the order of $Q(s)$ is $\leq \ell$. From \eqref{eqn: functional eqn of bm(s)}, 
\begin{equation}\label{eqn: Q on mfs} Q(s)(mf^s)=b_m(s) \cdots b_m(s + k)(mf^s).\end{equation}
Then \eqref{eqn: action of Q(s)} implies that for $\Re s\gg 0$,
\begin{align*}
\langle \varphi P(s) \cdots P(s + k)f^{k + 1} |f|^{2s}, S(m, m) \rangle 
&= \langle \varphi |f|^{2(s - \ell)}, S(f^\ell b_m(s) \cdots b_m(s + k) m, f^\ell m) \rangle \\
&= b_m(s) \cdots b_m(s + k) \langle \varphi |f|^{2s}, S(m, m)\rangle.
\end{align*}
This proves \eqref{eqn: mero ext of Zmm}.

Now, rearranging \eqref{eqn: mero ext of Zmm}, we have
\[ \langle \varphi, Z_{m, m}(s) \rangle = \frac{1}{b_m(s) \cdots b_m(s + k)} \langle \varphi P(s) \cdots P(s + k)f^{k + 1} |f|^{2s}, S(m, m) \rangle.\]
Since every coefficient of $s$ in $\varphi P(s) \cdots P(s + k)f^{k + 1}$ has derivatives vanishing to order at least $k + 1$ along $D$, and since $b_m(s)$ has no roots with real part $\geq 0$, the right hand side is well-defined and holomorphic in the region $\{s \in \mb{C} \mid \Re s \geq 0\}$. Hence, we may analytically continue $\langle \varphi, Z_{m, m}(s)\rangle$ to a holomorphic function on this region satisfying
\begin{align*} 
\langle \varphi, Z_{m, m}(0) \rangle &= \frac{1}{b_m(0) \cdots b_m(k)}\langle \varphi P(0) \cdots P(k) f^{k + 1}, S(m, m)\rangle\\
&=\frac{1}{b_m(0) \cdots b_m(k)}\langle \varphi, S(P(0) \cdots P(k) f^{k+1}m,  m)\rangle\\
&=\langle \varphi, S(m,m) \rangle.
\end{align*}

Now we write out the function $\langle \varphi, Z_{m, m}(s)\rangle$ more explicitly. Fix a smooth bump function $\psi \colon \mb{C} \to [0, 1]$ with compact support such that $\psi(t) = 1$ in a neighborhood of $t = 0$. Then for $2 \Re s > k$, we have
\[ \left(1 - \psi(nf)\right)|f|^{2s} \varphi \to |f|^{2s} \varphi \quad \text{as $n \to \infty$}\]
in the $C^k$-topology. Thus, assuming as above that the distribution $S(m, m)$ is of order $\leq k$ on the support of $\varphi$, and using that $1 - \psi(nf)$ vanishes near $D$ and that $S|_{X\setminus D}=\iota_{+}S_U$, we obtain, for $2 \Re s > k$,
\begin{align*}
 \langle \varphi, Z_{m, m}(s)\rangle &= \langle \varphi|f|^{2s}, S(m, m) \rangle \\
 &=\lim_{n \to \infty} \langle (1 - \psi(nf))|f|^{2s} \varphi, S(m, m)\rangle\\
 &= \lim_{n \to \infty} \langle (1 - \psi(nf))|f|^{2s} \varphi, (i_+S_U)(m|_{X\setminus D}, m|_{X\setminus D}) \rangle \\
&= \lim_{n \to \infty} \int_U (1 - \psi(nf))|f|^{2s}\varphi_U,
\end{align*}
where $\varphi_U$ is the top form on $U$ defined by
\[ \varphi_U = (-1)^{c(c - 1)/2}(2\pi\sqrt{-1})^c S_U(v, v)(\xi \contract \bar{\xi} \contract\varphi)|_U\]
in a local chart where $m = v \otimes \xi$ for some local sections $v \in F_{p - c} \mc{V}$ over $U$ and $\xi \in \omega_{U/X}$. Here we apply \eqref{eq:polarization pushforward}, using the fact that $m$ is a local section of $F_p\cM$, the lowest piece of the Hodge filtration, together with basic properties of the interior product.

Since $v$ lies in the lowest piece $F_{p - c}\mc{V}$ of the Hodge filtration of the polarized variation of Hodge structure $\mc{V}$ of weight $w - \dim X + c$, we have that $(-1)^{p + w - \dim X} S_U(v, v) \geq 0$, with equality only where $v = 0$. By the set-up in \eqref{eq:polarization pushforward}, $(-1)^{c(c - 1)/2}(2\pi\sqrt{-1})^c\left(\xi\contract \bar{\xi}\contract \varphi\right)|_U$ is a local volume form on $U$. So $(-1)^{p + w - \dim X} \varphi_U$ is a local volume form (possibly with non-compact support), which is zero if and only if $\varphi$ vanishes identically on the support of $m$. Since $0 \leq 1 - \psi(nf) \leq 1$ for all $n$ and $1 - \psi(nf) \to 1$ outside a set of measure $0$, Fatou's lemma and the dominated convergence theorem imply that for $2 \Re s > k$,
\[ \int_U |f|^{2s}\varphi_U=\lim_{n \to \infty} \int_U (1 - \psi(nf))|f|^{2s}\varphi_U=\langle \varphi, Z_{m, m}(s)\rangle \]
is a convergent integral since the limit exists.

Since $\langle \varphi, Z_{m, m}(s)\rangle$ has no poles with real part $\geq 0$, we conclude by Lemma \ref{lem:convergence} that
\[(-1)^{p + w - \dim X} \langle \varphi, S(m, m) \rangle = (-1)^{p + w - \dim X} \langle \varphi, Z_{m, m}(0) \rangle = \int_U (-1)^{p + w - \dim X}\varphi_U \]
is a convergent integral. The statement of the theorem follows.\end{proof}

\begin{lemma} \label{lem:convergence}
Let $U$ be a complex manifold, $f : U \to \mb{C}^\times$ a holomorphic function, and $\psi$ a local volume form on $U$, not necessarily with compact support. Suppose that the integral
\begin{equation} \label{eq:convergence 1}
I(s) = \int_U |f|^{2s}\psi
\end{equation}
converges for $\Re s \gg 0$ and admits a meromorphic continuation to $s \in \mb{C}$. Let
\[ s_0 = \max\{ \Re a \mid \text{$s = a$ is a pole of $I(s)$}\}.\]
Then the integral \eqref{eq:convergence 1} is absolutely convergent if and only if $\Re s > s_0$.
\end{lemma}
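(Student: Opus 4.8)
The plan is to separate the "easy" direction from the "hard" direction. In one direction, suppose $\Re s_1 > s_0$; I want to show the integral converges absolutely at $s = s_1$. The point is that $|f|^{2s}$ depends on $s$ only through $\Re s$ when we take absolute values: $\bigl| |f(x)|^{2s} \bigr| = |f(x)|^{2\Re s}$. So absolute convergence of $I(s)$ is a property of $\sigma = \Re s$ alone, and it suffices to work with real exponents. Fix any real $\sigma_0$ with $s_0 < \sigma_0 < \Re s_1$. On the region $\{\sigma \geq \sigma_0\}$ the meromorphic continuation $I(s)$ is holomorphic (no poles). I would exhaust $U$ by an increasing sequence of relatively compact open sets $U_1 \subseteq U_2 \subseteq \cdots$ with $\bigcup_j U_j = U$, and set $I_j(s) = \int_{U_j} |f|^{2s}\varphi$, a holomorphic function of $s$ on all of $\mathbb{C}$. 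Since $\varphi$ is a local volume form, for real $\sigma$ the integrands $|f|^{2\sigma}\varphi$ are nonnegative, so $I_j(\sigma)$ is increasing in $j$ for each fixed real $\sigma$, and $I_j(\sigma) \to \int_U |f|^{2\sigma}\varphi \in [0,\infty]$ by monotone convergence. The key step is to show this limit is finite for $\sigma \in (\sigma_0, \Re s_1]$; I would do this by a three-lines / Phragmén--Lindström type argument: the $I_j$ are uniformly bounded on the vertical line $\Re s = \sigma_1$ for some large $\sigma_1$ (there they converge to the absolutely convergent $I(\sigma_1)$, and are dominated), and on $\Re s = \sigma_0$ one uses that $I(s)$ is holomorphic hence locally bounded there while $I_j(s) \to I(s)$; a normal-families / uniform-boundedness argument on the strip $\sigma_0 \leq \Re s \leq \sigma_1$ then bounds $I_j$ uniformly on the whole strip, and in particular at real points $\sigma \in (\sigma_0, \sigma_1)$, giving $\int_U |f|^{2\sigma}\varphi < \infty$.

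For the converse, suppose the integral converges absolutely at some real $\sigma$; I want $\sigma > s_0$. Since absolute convergence at $\sigma$ implies absolute convergence at every $\sigma' \geq \sigma$ (as $|f|$ could be large, but one splits $U$ into $\{|f| \leq 1\}$ and $\{|f| > 1\}$—wait, more carefully: on $\{|f|\le 1\}$ larger exponent helps, on $\{|f|>1\}$ it hurts, so monotonicity in $\sigma$ is not automatic). Instead I argue directly: if $I(s)$ converges absolutely for $\Re s = \sigma$, then the defining integral $\int_U |f|^{2s}\varphi$ converges locally uniformly for $s$ in a neighborhood of the line $\Re s = \sigma$ intersected with $\{\Re s \geq \sigma\}$... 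Actually the cleanest route: absolute convergence at $\sigma$ gives, by dominated convergence, that $s \mapsto \int_U |f|^{2s}\varphi$ is holomorphic on $\{\Re s > \sigma\}$ and continuous up to $\{\Re s \geq \sigma\}$, and agrees there with the meromorphic continuation $I(s)$. Hence $I(s)$ has no poles with $\Re s \geq \sigma$, so $s_0 < \sigma$. (One must handle the boundary case $\Re s = \sigma$: absolute convergence on the closed half-plane boundary line is enough to rule out a pole \emph{on} that line too, so in fact we get $s_0 < \sigma$ strictly, matching the statement.)

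The main obstacle is the forward (hard) direction: upgrading "no poles for $\Re s > s_0$" of the \emph{meromorphically continued} $I(s)$ to \emph{absolute convergence} of the original integral. The subtlety is that the meromorphic continuation a priori has nothing to do with the literal integral once we leave the half-plane of convergence; the bridge is the positivity of the integrand for real $s$ together with a Vitali/Montel-type argument controlling the family $\{I_j\}$ on a vertical strip from its boundary behaviour. I would phrase this via: (i) pick $\sigma_1 \gg 0$ in the original region of absolute convergence; (ii) on the closed strip $\sigma_0 \leq \Re s \leq \sigma_1$, the holomorphic functions $I_j$ converge pointwise to the holomorphic function $I$ on the boundary lines, hence (after checking a uniform bound on one line coming from the original integral, and local boundedness of $I$ on the other line since it is pole-free there) are uniformly bounded on the boundary; (iii) by the maximum principle they are uniformly bounded on the whole strip; (iv) evaluate at a real $\sigma \in (\sigma_0,\sigma_1)$ and invoke monotone convergence. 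This last argument is standard but needs to be executed with a little care about whether $I$ is bounded (not just finite) near $\Re s = \sigma_0$; shrinking to a slightly smaller strip $[\sigma_0', \sigma_1]$ with $s_0 < \sigma_0 < \sigma_0'$ sidesteps that, since $I$ is holomorphic on an open neighbourhood of $\Re s = \sigma_0'$.
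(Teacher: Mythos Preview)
Your easy direction is fine (once you note that convergence of $\int_U|f|^{2\sigma}\varphi$ together with convergence at some large $\sigma_1$ forces convergence for every intermediate real exponent, since $|f|^{2\sigma'}\le |f|^{2\sigma}+|f|^{2\sigma_1}$).

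The hard direction has a genuine gap. Your maximum-principle argument on the strip $\sigma_0\le\Re s\le\sigma_1$ needs a uniform bound for $(I_j)$ on \emph{both} boundary lines. On the right line you do have $|I_j(s)|\le I_j(\sigma_1)\le I(\sigma_1)$. But on the left line you assert that ``$I_j(s)\to I(s)$'' and then deduce boundedness; this is circular. The only region where you know $I_j\to I$ is where the defining integral already converges, namely $\Re s\gg 0$. At $\Re s=\sigma_0$ there is no a priori relationship between the partial integrals $I_j$ and the meromorphic continuation $I$; establishing such a relationship is exactly the content of the lemma. A Vitali/Montel argument does not rescue this either, since that would presuppose local boundedness of $(I_j)$ on the strip, which is again the point at issue.

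The paper supplies the missing idea via a Landau--Pringsheim type argument. After splitting off $\{|f|\ge C>0\}$ (where convergence for one real $s$ trivially gives it for all) to reduce to $|f|<1$, one takes the exhaustion $I_r(s)=\int_{|f|>1/r}|f|^{2s}\varphi$ and expands $I$ and $I_r$ in Taylor series about a real $s_1$ in the original region of convergence. The crucial observation is that the coefficients
\[
a_n(r)=\frac{1}{n!}\int_{|f|>1/r}|f|^{2s_1}(-\log|f|^2)^n\,\varphi
\]
are nonnegative (here $|f|<1$ is essential) and increasing in $r$, hence $a_n(r)\le a_n$. Since every pole of $I$ has real part $\le s_0$, the Taylor disk of $I$ at the real point $s_1$ has radius at least $s_1-s_0$; thus any real $t\in(s_0,s_1)$ lies strictly inside it, and
\[
I_r(t)=\sum_n a_n(r)(s_1-t)^n\le\sum_n a_n(s_1-t)^n<\infty
\]
uniformly in $r$. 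So the positivity one needs is not merely that of the integrand, but of all its $s$-derivatives at a real base point: this is what converts information about the location of the poles of $I$ (through the radius of convergence of its Taylor series) into a uniform bound on the approximants.
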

\begin{proof}
The only if part is clear; we prove the other direction. First, if $|f| \geq C > 0$, then the function $C^{-2s}|f|^{2s}$ is increasing in $\Re(s)$, so the assumptions imply that $I(s)$ converges for all $s$. So, partitioning $U$ if necessary, we may assume that $|f| < 1$. For $r > 0$, let
\[ I_r(s) = \int_{|f| > \frac{1}{r}} |f|^{2s} \psi.\]
This integral converges for all $s$ to a holomorphic function on $\mb{C}$, and we have
\[ I(s) = \lim_{r \to \infty} I_r(s) \quad \text{for $\Re s \gg 0$}.\]
Fix $z \in \mb{R}_{>s_0}$. Since $||f|^{2s}| = |f|^{2 \Re s}$, it suffices to show that the above limit exists for $s = z$. Choose any $s_1 \in \mb{R}$ such that \eqref{eq:convergence 1} converges in a neighborhood of $s = s_1$. Write
\[ I(s) = \sum_{n = 0}^\infty a_n (s_1 - s)^n \quad \text{and} \quad I_r(s) = \sum_{n = 0}^\infty a_n(r) (s_1 - s)^n \]
for the Taylor series expansions. Note that, by assumption, $s = z$ is inside the radius of convergence for both series. By the Cauchy integral formula, we have $a_n(r) \to a_n$ as $r \to \infty$. Moreover, differentiating  $r$ times with respect to $s$ we have 
\begin{equation} \label{eq:convergence 2}
 a_n(r) = \frac{1}{n!}\int_{|f|> \frac{1}{r}}|f|^{2s_1}(-\log |f|^2)^n \psi.
\end{equation}
Since $|f| < 1$, the integrand of \eqref{eq:convergence 2} is non-negative, so the function $a_n(r)$ is increasing. Hence,
\[ a_n(r) \leq a_n \quad \text{for all $r$ and all $n$}.\]
Thus, we have
\[ I_r(z) \leq \sum_{n = 0}^\infty a_n(r)|s_1 - z|^n \leq \sum_{n = 0}^\infty a_n|s_1 - z|^n < \infty, \]
as $z$ is inside the radius of convergence. Since $I_r(z)$ is an increasing function of $r$, we conclude that $\lim_{r \to \infty} I_r(z)$ exists and hence the integral \eqref{eq:convergence 1} converges as claimed.
\end{proof}

\subsection{Polarized Hodge-Lefschetz modules}

Of particular interest to us are the Hodge module structures on nearby and vanishing cycles (or, in other words, on $\sD$-modules of the form $\gr_V^\alpha(-)$). These are not in general polarized Hodge modules, but examples of the following more general structure. For the definition, we recall that if $\op{N}: \mc{M} \to \mc{M}$ is a nilpotent endomorphism of a $\sD_X$-module $\mc{M}$ then the \emph{monodromy weight filtration} $W(\op{N})_\bullet \mc{M}$ is the unique finite increasing filtration such that $N(W(\op{N})_r\mc{M}) \subseteq W(\op{N})_{r - 2}\mc{M}$ for all $r$ and the following is an isomorphism for all $r \geq 0$:
\[ \op{N}^r : \gr^{W(\op{N})}_r \mc{M} \overset{\sim}\to \gr^{W(\op{N})}_{-r} \mc{M}. \]

\begin{definition}\label{definition: polarized HL module}
Let $\mc{M}$ be a regular holonomic $\sD_X$-module, $F_\bullet \mc{M}$ a good filtration, $S$ a distribution-valued Hermitian form on $\mc{M}$ and $\op{N} : \mc{M} \to \mc{M}$ a nilpotent morphism of $\sD_X$-modules. We say $(\cM,F_\bullet \mc{M}, \op{N}, S)$ is a \emph{polarized Hodge-Lefschetz module of central weight $w$} if the following conditions hold.
\begin{enumerate}
\item[(i)] $\op{N}(F_\bullet \mc{M}) \subseteq F_{\bullet + 1}\mc{M}$,
\item[(ii)] $\op{N}$ is self-adjoint with respect to $S$,
\item[(iii)] the morphism $\op{N}^r : (\gr^{W(\op{N})}_{r}\mc{M}, F_\bullet) \to (\gr^{W(\op{N})}_{- r}\mc{M}, F_{\bullet + r})$ is a filtered isomorphism for all $r\geq 0$, and
\item[(iv)] the tuple $(\op{P}\gr^{W(\op{N})}_{r}\mc{M}, F_\bullet, (-1)^rS(-, \op{N}^r-),w+r)$ is a polarized Hodge module for all $r \geq 0$, where 
\[ \op{P} \gr^{W(\op{N})}_r \mc{M} \colonequals \ker(\op{N}^{r + 1} :\gr^{W(\op{N})}_{r} \mc{M} \to \gr^{W(\op{N})}_{- r - 2}\mc{M}).\]
\end{enumerate}
\end{definition}
The morphism $\op{N}$ is automatically strict with respect to $W(\op{N})_\bullet$ by construction and strict with respect to $F_\bullet$ by $(\mathrm{iii})$. Note that self-adjointness of $\op{N}$ implies that \begin{equation}\label{eqn: selfadjointness}
S(W(\op{N})_r\mc{M}, W(\op{N})_{r'}\mc{M}) = 0, \quad \textrm{for $r + r' < 0$},
\end{equation}
so the polarization in $(\mathrm{iv})$ is well-defined. 

We recall in the next section how to regard nearby and vanishing cycles as polarized Hodge-Lefschetz modules; for now, we record some general properties of the lowest piece of the Hodge filtration. In the statement below, we say that a local section $m \in \mc{M}$ has \emph{monodromy weight $r$} if $m \in W(\op{N})_r\mc{M} \setminus W(\op{N})_{r - 1}\mc{M}$. The submodule $\op{P}\gr^{W(\op{N})}_{r}\mc{M}$ is called the \emph{primitive part} and its local sections are called \emph{primitive elements}.

\begin{lemma}\label{lemma: lowest hodge is primitive}
Assume $m\in \cM$ is an element of monodromy weight $r$ such that $0\neq m\in F_p\cM$ and $ F_{p-1}\cM=0$.
Then $r\geq 0$, $[m]\in\gr^{W(\op{N})}_r\cM$ is primitive and 
\begin{equation}\label{eqn: weight level}
r+1=\min\{ \ell\in \Z_{\geq 1} \mid \op{N}^{\ell}\cdot m=0\in \cM\}.\end{equation}
\end{lemma}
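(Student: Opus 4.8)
The plan is to exploit condition (iii) in the definition of a polarized Hodge--Lefschetz module, which says that $\op{N}^r$ induces a \emph{filtered} isomorphism $(\gr^{W(\op{N})}_r\mc{M}, F_\bullet) \xrightarrow{\sim} (\gr^{W(\op{N})}_{-r}\mc{M}, F_{\bullet + r})$, together with the fact that $F_{p-1}\mc{M} = 0$ forces the Hodge filtration on every graded piece to also start in degree $p$ at the earliest. First I would show $r \geq 0$: suppose $r < 0$. Since $\op{N}^{-r}: \gr^{W(\op{N})}_{-r}\mc{M} \to \gr^{W(\op{N})}_{r}\mc{M}$ is surjective (it is an isomorphism onto, by (iii), after relabelling), the class $[m] \in \gr^{W(\op{N})}_r\mc{M}$ is of the form $\op{N}^{-r}[m']$ for some $m' \in W(\op{N})_{-r}\mc{M}$; but then by (i), iterating $\op{N}(F_\bullet) \subseteq F_{\bullet+1}$, we would need $m' \in F_{p-(-r)}\mc{M} = F_{p+r}\mc{M}$ with $p + r < p$, contradicting $F_{p-1}\mc{M} = 0$ (here one must be a little careful: $[m] \neq 0$ in $\gr^{W(\op{N})}_r\mc{M}$ since $m$ has monodromy weight exactly $r$, and one lifts the filtered-surjectivity statement from the associated graded to $\mc{M}$ itself using strictness of $\op{N}$ with respect to both $W(\op{N})_\bullet$ and $F_\bullet$). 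Hence $r \geq 0$.

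Next I would prove primitivity, i.e. $[m] \in \op{P}\gr^{W(\op{N})}_r\mc{M} = \ker(\op{N}^{r+1}: \gr^{W(\op{N})}_r\mc{M} \to \gr^{W(\op{N})}_{-r-2}\mc{M})$. By the Lefschetz decomposition of $\mc{M}$ under $\op{N}$ (which follows formally from (iii)), we may write $[m] = \sum_{j \geq 0} \op{N}^j [m_j]$ with $[m_j] \in \op{P}\gr^{W(\op{N})}_{r + 2j}\mc{M}$ primitive. Lifting to $\mc{M}$ and using (i) repeatedly, each nonzero term $\op{N}^j m_j$ with $j \geq 1$ would have to lie in $F_p\mc{M}$, forcing $m_j \in F_{p - j}\mc{M} = 0$; so all higher terms vanish and $[m] = [m_0]$ is primitive of weight $r$, which in particular requires $r + 2\cdot 0 = r$, consistent. (As before, the passage between $\mc{M}$ and its $W(\op{N})$-graded requires the strictness of $\op{N}$, already noted in the paragraph after Definition 2.11.)

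Finally I would establish the equality \eqref{eqn: weight level}. On the one hand, since $[m]$ is primitive in $\gr^{W(\op{N})}_r\mc{M}$, we have $\op{N}^{r+1}[m] = 0$ in $\gr^{W(\op{N})}_{-r-2}\mc{M}$, so $\op{N}^{r+1}m \in W(\op{N})_{-r-3}\mc{M}$; but also $\op{N}^{r+1}m \in F_{p + r + 1}\mc{M}$, and I claim it is in fact $0$. Indeed $\op{N}^{r+1}m$ has monodromy weight $\leq -r-3 < -(r+1)$, so if it were nonzero its image in the appropriate graded piece $\gr^{W(\op{N})}_{r'}\mc{M}$ with $r' \leq -r-3$ would, by the Lefschetz structure, be hit by $\op{N}^{-r'} $ from weight $-r' \geq r+3 > 0$, hence lie in $F_{p - (-r')} \subseteq F_{p - r - 3}\mc{M}$ — an iteration of this argument (or directly: the primitive decomposition of $\op{N}^{r+1}m$ involves only primitive classes of weight $\geq r' + \text{(even)} $, all of which vanish by the $F_{p-1}\mc{M}=0$ argument once one accounts for how many times $\op{N}$ has been applied) yields $\op{N}^{r+1}m = 0$. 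This gives $\min\{\ell \geq 1 : \op{N}^\ell m = 0\} \leq r + 1$. On the other hand, $\op{N}^r: \gr^{W(\op{N})}_r\mc{M} \to \gr^{W(\op{N})}_{-r}\mc{M}$ is injective, so $\op{N}^r[m] \neq 0$, hence $\op{N}^r m \neq 0$ in $\mc{M}$, giving $\min\{\ell \geq 1 : \op{N}^\ell m = 0\} \geq r+1$. Combining the two inequalities yields \eqref{eqn: weight level}.

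The main obstacle I anticipate is the bookkeeping in going back and forth between statements about $\mc{M}$ and statements about $\gr^{W(\op{N})}_\bullet\mc{M}$: the Lefschetz decomposition and the filtered-isomorphism property (iii) live on the associated graded, whereas the hypothesis $F_{p-1}\mc{M} = 0$ and the conclusion about $\op{N}^\ell m$ live on $\mc{M}$ itself. The bridge is the double strictness of $\op{N}$ (with respect to $W(\op{N})_\bullet$ and, by (iii), with respect to $F_\bullet$), noted just after Definition \ref{definition: polarized HL module}; the cleanest organization is probably to prove a single auxiliary claim — ``if $m \in F_p\mc{M}$ has monodromy weight $r$ and $F_{p-1}\mc{M}=0$, then in the primitive decomposition $[m] = \sum_j \op{N}^j[m_j]$ one has $[m_j] = 0$ for $j \geq 1$'' — and then read off all three assertions ($r \geq 0$, primitivity, and \eqref{eqn: weight level}) from it, rather than re-running the lifting argument three times.
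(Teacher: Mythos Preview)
Your proposal is correct and rests on the same key ingredient as the paper (the filtered isomorphism in condition (iii) together with $F_{p-1}\mc{M}=0$), but the paper organizes the argument more efficiently and avoids the Lefschetz decomposition entirely. Instead of first proving primitivity and then iterating downward through the weight filtration to get $\op{N}^{r+1}m=0$, the paper proves the single vanishing statement
\[
F_{p+r+1}\gr^{W(\op{N})}_{-\ell}\mc{M}=\op{N}^{\ell}\bigl(F_{p+r+1-\ell}\gr^{W(\op{N})}_{\ell}\mc{M}\bigr)=0\quad\text{for all }\ell\geq r+2,
\]
which immediately gives $F_{p+r+1}W(\op{N})_{-r-2}\mc{M}=0$; since $\op{N}^{r+1}m$ lies in this module, one gets $\op{N}^{r+1}m=0$ in $\mc{M}$ in one stroke, and primitivity follows as a corollary rather than a separate step. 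Your ``iteration'' argument for $\op{N}^{r+1}m=0$ in fact unwinds to exactly this computation, so the detour through the primitive decomposition (and the attendant lifting/strictness bookkeeping you flag as the main obstacle) can simply be deleted. The argument for $r\geq 0$ and the inequality $\op{N}^r m\neq 0$ are the same in both approaches.
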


\begin{proof}
Clearly $r \geq 0$, since if $r = -k < 0$, we would have
\[ F_{p - k}\gr^{W(\op{N})}_{k} \mc{M} \cong F_p \gr^{W(\op{N})}_{-k} \mc{M} \neq 0\]
contradicting $F_{p - 1}\mc{M} = 0$. Moreover, for $\ell > r + 1$,
\[ F_{p + r + 1}\gr^{W(\op{N})}_{-\ell}\mc{M} = \op{N}^\ell(F_{p + r + 1 - \ell}\gr^{W(\op{N})}_\ell\mc{M}) = 0\]
since $F_{p - 1}\mc{M} = 0$. Hence $\op{N}^{r + 1}m \in F_{p + r + 1} W(\op{N})_{-r - 2}\mc{M} = 0$. This shows that $[m]$ is primitive and, since $\op{N}^r m \neq 0$, \eqref{eqn: weight level} as well.
\end{proof}

\begin{corollary} \label{cor:HL lowest piece is positive}
Let $(\mc{M}, F_\bullet\mc{M}, \op{N}, S)$ be a polarized Hodge-Lefschetz module of weight $w$. Assume that $F_{p - 1}\mc{M} = 0$ and let $m \in F_p\mc{M}$ be a local section of monodromy weight $r$. Then for any local volume form $\varphi$, we have
\[ (-1)^{p + w - \dim X}\langle \varphi, S(m, \op{N}^r m) \rangle \geq 0,\]
with equality if and only if $\varphi$ vanishes identically on the support of $[m]\in \gr^{W(\op{N})}_r\mc{M}$.
\end{corollary}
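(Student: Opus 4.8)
The plan is to deduce this from Theorem \ref{thm:lowest piece is positive} by passing to the primitive part of the monodromy-weight graded piece containing $m$, and then to bookkeep the sign conventions.

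First I would invoke Lemma \ref{lemma: lowest hodge is primitive}: the hypotheses $0 \neq m \in F_p\mc{M}$, $F_{p-1}\mc{M} = 0$ force $r \geq 0$ and $[m] \in \gr^{W(\op{N})}_r\mc{M}$ to be primitive, so $0 \neq [m] \in \op{P}\gr^{W(\op{N})}_r\mc{M}$. By condition (iv) of Definition \ref{definition: polarized HL module}, the tuple $\bigl(\op{P}\gr^{W(\op{N})}_r\mc{M},\, F_\bullet,\, (-1)^r S(-, \op{N}^r-),\, w + r\bigr)$ is a polarized Hodge module on $X$. Next I would check that $[m]$ sits in the lowest piece of its Hodge filtration: since $F_{p-1}\mc{M}=0$ we get $F_{p-1}\gr^{W(\op{N})}_r\mc{M}=0$ (the filtration on $\gr^{W(\op{N})}_r\mc{M}$ being induced from $\mc{M}$), hence $F_{p-1}\op{P}\gr^{W(\op{N})}_r\mc{M}=0$, while $m \in F_p\mc{M}\cap W(\op{N})_r\mc{M}$ gives $[m]\in F_p\op{P}\gr^{W(\op{N})}_r\mc{M}$. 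Applying Theorem \ref{thm:lowest piece is positive} to this polarized Hodge module of weight $w+r$ with the lowest-piece section $[m]\in F_p$ then yields, for every local volume form $\varphi$,
\[ (-1)^{p + (w + r) - \dim X}\,\bigl\langle \varphi,\, (-1)^r S([m], \op{N}^r[m]) \bigr\rangle \geq 0,\]
with equality if and only if $\varphi$ vanishes identically on the support of $[m]$, the latter being the same whether computed in $\op{P}\gr^{W(\op{N})}_r\mc{M}$ or in $\gr^{W(\op{N})}_r\mc{M}$ since the former is a $\sD_X$-submodule of the latter.

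Finally I would clean up the signs and identify the form. The two factors $(-1)^r$ combine into $(-1)^{p + w + 2r - \dim X} = (-1)^{p + w - \dim X}$. By the self-adjointness relation \eqref{eqn: selfadjointness}, the Hermitian form on $\op{P}\gr^{W(\op{N})}_r\mc{M}$ appearing in (iv) is the one descended from $S$ on $\mc{M}$, so that $\langle \varphi, S([m], \op{N}^r[m])\rangle = \langle \varphi, S(m, \op{N}^r m)\rangle$: replacing $m$ by a representative differing by an element of $W(\op{N})_{r-1}\mc{M}$ changes the pairing only by terms pairing $W(\op{N})_{r-1}\mc{M}$ against $\op{N}^r m \in W(\op{N})_{-r}\mc{M}$, which vanish since $(r-1)+(-r)<0$. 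Substituting gives exactly the claimed inequality and equality criterion. I do not expect a genuine obstacle here; the only point requiring care is precisely this sign/compatibility bookkeeping (the factors of $(-1)^r$ and the well-definedness of the descended polarization), all of which is already encoded in Definition \ref{definition: polarized HL module}, the ensuing strictness remarks, and Lemma \ref{lemma: lowest hodge is primitive}.
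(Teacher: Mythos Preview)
Your proposal is correct and follows essentially the same route as the paper's proof: invoke Lemma \ref{lemma: lowest hodge is primitive} to see that $[m]$ is primitive, then apply Theorem \ref{thm:lowest piece is positive} to the polarized Hodge module $\bigl(\op{P}\gr^{W(\op{N})}_r\mc{M}, F_\bullet, (-1)^r S(-,\op{N}^r-), w+r\bigr)$ and cancel the two factors of $(-1)^r$. Your write-up is in fact more explicit than the paper's, spelling out why $[m]$ lies in the lowest Hodge piece and why the pairing $S([m],\op{N}^r[m])$ agrees with $S(m,\op{N}^r m)$ via \eqref{eqn: selfadjointness}.
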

\begin{proof}
By Lemma \ref{lemma: lowest hodge is primitive}, the class $[m] \in \gr^{W(\op{N})}_r\mc{M}$ is primitive. Since
\[(\op{P}\gr^{W(\op{N})}_r\mc{M}, F_{\bullet}, (-1)^r S(-, \op{N}^r-),w+r)\]
is a polarized Hodge module, by Theorem \ref{thm:lowest piece is positive} we conclude that
\[ (-1)^{p + w + r - \dim X} \langle \varphi, (-1)^r S([m], \op{N}^r [m])\rangle =  (-1)^{p + w - \dim X} \langle \varphi, S(m, \op{N}^r m)\rangle \geq 0. \qedhere\]

\end{proof}

\section{The Archimedean zeta function and polarizations on nearby cycles} \label{sec:zeta vs polarization}

In this section, we establish a new connection between the theory of complex polarized Hodge modules and the poles of Archimedean zeta functions. We fix once and for all a complex manifold $X$ and a holomorphic function $f : X \to \mb{C}$.

\subsection{Polarizations on nearby and vanishing cycles}\label{sec: polarization on nearby and vanishing cycles}

Let $(\mc{M}, F_\bullet\mc{M}, S)$ be a polarized Hodge module on $X$. Recall that, in the theory of $\sD$-modules, the nearby and vanishing cycles of $\mc{M}$ are defined by the formulas
\[
\psi_f\mc{M} \colonequals \bigoplus_{\alpha \in (0, 1]} \psi_{f, e^{-2\pi i\alpha}}\mc{M} \quad \text{and} \quad \phi_f\mc{M} \colonequals \phi_{f, 1}\mc{M}\oplus \bigoplus_{\alpha \in (0, 1)} \psi_{f, e^{-2\pi i\alpha}} \mc{M},
\]
where
\[ \psi_{f, e^{-2\pi i \alpha}}\mc{M} = \gr_V^\alpha \iota_+\mc{M}\quad \text{and} \quad \phi_{f, 1}\mc{M} = \gr_V^0\iota_+\mc{M}.\]
The Hodge filtration on $\iota_+\mc{M}$ induces natural Hodge filtrations on these modules via 
\begin{equation}\label{eqn: shifts of Hodge}
F_\bullet\psi_{f,e^{-2\pi i\alpha}}\mc{M}=F_{\bullet}\gr_V^\alpha \iota_+\mc{M} \quad \text{and} \quad F_\bullet\phi_{f, 1}\mc{M} = F_{\bullet+1}\gr^0_V\cM.
\end{equation}
Similarly, we have the nilpotent operators
\[ \op{N} \colonequals s + \alpha = \alpha - \partial_tt : (\gr_V^\alpha\iota_+\mc{M}, F_\bullet) \to (\gr_V^\alpha\iota_+\mc{M}, F_{\bullet + 1}).\]
The triples $(\gr_V^\alpha \iota_+\mc{M}, F_\bullet, \op{N})$ can be completed to polarized Hodge-Lefschetz modules; we recall the construction of the polarization when $\alpha >0$.

Choose a smooth function $\eta : \mb{C} \to [0, 1]$ with compact support that is identically equal to $1$ near the origin. For $w_1, w_2 \in \iota_+\mc{M}$ and a smooth test form $\varphi$ on $X$, consider the meromorphic function
\[ F_{w_1, w_2}(\varphi; s) = \langle |t|^{2s}\eta(t - f) \varphi \wedge d\mathrm{Vol}_{\mb{C}}, \iota_+S(w_1, w_2)\rangle,\]
where
\[ d \mathrm{Vol}_{\mb{C}} = \frac{\sqrt{-1}}{2\pi} dt \wedge d\bar{t} \]
is a standard volume form on $\mb{C}$: by \cite[\S 12.5.b]{MHMproject}, the function $F_{w_1, w_2}(\varphi;s)$ is well-defined, holomorphic for $\Re(s) \gg 0$ and admits a meromorphic continuation to $s \in \mb{C}$. The polarization $\gr_V^\alpha(S)$ on the nearby cycles $\gr_V^{\alpha}\iota_{+}\cM$ is defined by \cite[12.5.10]{MHMproject}
\begin{equation}\label{eqn: definition of polarization on grValpha}
\langle \varphi, \gr_V^\alpha(S)([w_1], [w_2]) \rangle \colonequals \Res_{s = -\alpha} F_{w_1, w_2}(\varphi; s). \end{equation}
With this definition, $(\gr_V^{\alpha}\iota_{+}\cM, F_{\bullet}, \op{N}, \gr_V^\alpha(S))$ is a polarized Hodge-Lefschetz module of central weight $w - 1$ for all $\alpha \in (0, 1]$ \cite[14.2.2, 12.7.17, 12.5.15]{MHMproject}, cf., \cite[5.2.10.2]{Saito88}. We refer to \cite[Appendix]{DV22} for a proof that this analytic construction agrees with the topological construction given by Saito under the Riemann-Hilbert correspondence.

One can also construct a Hermitian form $\gr_V^0(S)$ on the vanishing cycles module $\gr_V^0\iota_{+}\cM$ such that $(\gr_V^0\iota_{+}\cM, F_{\bullet}, \op{N}, \gr_V^0(S))$ is a polarized Hodge-Lefschetz module of weight $w - 1$\footnote{Note that since the Hodge filtration on $\gr_V^0\iota_{+}\cM$ is shifted by $1$ relative to $\phi_{f, 1}\mc{M}$, the weight is $w - 1$ instead of $w$.} \cite[14.2.2, 12.5.24]{MHMproject}, cf., \cite[5.2.10.3]{Saito88}. We will not need to know the construction, only the following standard properties.

\begin{prop}[{\cite[12.5.28]{MHMproject}}]\label{prop: relation between polarization grVo and grV1}
The morphisms
\begin{align*}
\op{can} \colonequals -\partial_t : (\gr_V^1\iota_{+}\mc{M}, F_\bullet) &\to (\gr_V^0\iota_{+}\cM, F_{\bullet + 1}),\\
\op{var} \colonequals t : (\gr_V^0\iota_{+}\cM, F_\bullet) &\to (\gr_V^1\iota_{+}\mc{M}, F_\bullet), 
\end{align*}
satisfy $\op{can} \circ \op{var} = \op{N}$, $\op{var} \circ \op{can} = \op{N}$ and
\[ \gr_V^0(S)(m, \op{can} m') = -\gr_V^1(S)(\op{var} m, m').\]
\end{prop}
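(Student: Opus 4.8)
The approach is to separate the statement into the two $\sD_X$-module identities $\op{can}\circ\op{var} = \op{N}$, $\op{var}\circ\op{can} = \op{N}$, which are elementary, and the compatibility $\gr_V^{0}(S)(m,\op{can}m') = -\gr_V^{1}(S)(\op{var}m,m')$, which is the real content. I would deduce the latter from the analytic (Mellin-transform) description of the nearby- and vanishing-cycle pairings recalled in~\S\ref{sec: polarization on nearby and vanishing cycles}.

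For the operator identities, I would first check that $\op{var} = t$ and $\op{can} = -\partial_t$ descend to the graded pieces: Theorem~\ref{thm: uniqueness of V filtration}(3) gives $t\cdot V^{0}\iota_+\cM \subseteq V^{1}\iota_+\cM$ and $\partial_t\cdot V^{1}\iota_+\cM \subseteq V^{0}\iota_+\cM$, and both operators carry $V^{>0}$ (resp.\ $V^{>1}$) into $V^{>1}$ (resp.\ $V^{>0}$), while the statements about the Hodge filtration follow from how $F_\bullet\iota_+\cM$ interacts with multiplication by $t$ (equivalently by $f$, up to a commutator) and by $\partial_t$. Granting this, the identities reduce to a one-line computation from $s = -\partial_t t$ and $\partial_t t = t\partial_t + 1$: on $\gr_V^{0}$, where $\op{N} = s$, one has $\op{can}\circ\op{var} = -\partial_t t = s = \op{N}$, and on $\gr_V^{1}$, where $\op{N} = s+1 = -t\partial_t$, one has $\op{var}\circ\op{can} = -t\partial_t = \op{N}$ — which is exactly why $\op{can}$ carries the sign $-\partial_t$.

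For the polarization identity, the plan is to reduce both sides to residues of the Mellin-type integrals and then match. Fixing $m \in \gr_V^{0}\iota_+\cM$, $m'\in\gr_V^{1}\iota_+\cM$ with lifts $w\in V^{0}\iota_+\cM$, $w'\in V^{1}\iota_+\cM$, formula~\eqref{eqn: definition of polarization on grValpha} and $\sD$-linearity of $\iota_+S$ in its first argument give
\[ \gr_V^{1}(S)(\op{var} m, m') = \Res_{s=-1}\big\langle\, t\,|t|^{2s}\,\eta(t-f)\,\varphi\wedge d\mathrm{Vol}_{\mb{C}},\ \iota_+S(w,w')\,\big\rangle. \]
On the other side I would write $\op{can}m' = -\partial_t m'$ and use conjugate-$\sD$-linearity in the second argument, $\iota_+S(w,-\partial_t w') = -\partial_{\bar t}\,\iota_+S(w,w')$; an integration by parts in the $\bar t$-direction moves $\partial_{\bar t}$ onto $|t|^{2s}\eta(t-f)$. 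Since $\partial_{\bar t}|t|^{2s} = s\,\bar t^{-1}|t|^{2s}$, and since the term involving $\partial_{\bar t}\eta(t-f)$ vanishes identically — it is supported away from the graph $\{t=f\}$, on which the distribution $\iota_+S(w,w')$ is supported — one is left with $\pm\, s\,\big\langle\,\bar t^{-1}|t|^{2s}\,\eta(t-f)\,\varphi\wedge d\mathrm{Vol}_{\mb{C}},\ \iota_+S(w,w')\,\big\rangle$. The concluding step is to recognize this expression, through the construction of $\gr_V^{0}(S)$ in~\cite[12.5.24]{MHMproject}, as $\gr_V^{0}(S)(m,\op{can}m')$, the scalar $-1$ being produced by the integration-by-parts sign, the sign in $\op{can}$, and the residue normalization in~\eqref{eqn: definition of polarization on grValpha}.

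The hard part will be this last matching: getting the constant to come out exactly $-1$ (rather than some other nonzero scalar) forces one to track precisely (i) the normalizations hidden in $\iota_+S$ (the factor $(-1)^{c(c-1)/2}(2\pi\sqrt{-1})^{c}$ with $c=1$ coming from~\eqref{eq:polarization pushforward}) and in the construction of $\gr_V^{0}(S)$; (ii) the independence of $\Res_{s=-1}$ above of the chosen lifts $w,w'$, which relies on the moderate-growth vanishing of~\cite[\S12.5]{MHMproject} (that $F_{w_1,w_2}(\varphi;s)$ has no pole at $s=-1$ once $w_1\in V^{>0}$ or $w_2\in V^{>1}$); and (iii) the legitimacy of the integration by parts and of discarding the $\partial_{\bar t}\eta$-term at the level of the meromorphic continuation, not just formally. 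Once these analytic points are in place, combining them with the two operator identities gives the Proposition.
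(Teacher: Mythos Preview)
The paper does not prove this proposition: it is quoted verbatim from \cite[12.5.28]{MHMproject}, and the sentence immediately preceding it explicitly says ``We will not need to know the construction [of $\gr_V^0(S)$], only the following standard properties.'' So there is no in-paper proof to compare your proposal against.

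That said, your outline is a reasonable sketch of how the cited reference proceeds. The operator identities $\op{can}\circ\op{var}=\op{N}$ and $\op{var}\circ\op{can}=\op{N}$ are indeed the one-line computation you describe. For the polarization identity, your strategy of integrating by parts in the Mellin integral and discarding the $\partial_{\bar t}\eta$-term by support considerations is exactly the mechanism used in \cite[\S12.5]{MHMproject}. The honest gap in your write-up is the one you yourself flag: the ``concluding step'' of matching your residual expression with $\gr_V^0(S)(m,\op{can}m')$ presupposes the explicit definition of $\gr_V^0(S)$ from \cite[12.5.24]{MHMproject}, which neither you nor the paper reproduce. Without that definition in hand, you cannot actually pin down the constant as $-1$ rather than some other scalar; your proof is therefore conditional on importing that construction, which is precisely what the paper chose to treat as a black box. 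In short, your proposal is a correct outline of the argument in the source, but it is not self-contained relative to the present paper, and was not intended to be by the authors.
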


\subsection{Archimedean zeta functions}\label{sec:Archimedean zeta function}
Let us now relate the above discussion to Archimedean zeta functions of the kind considered in the introduction. For elements $w_1, w_2 \in \iota_+(\mc{O}_X)_f$
of the pushforward of the localization of $\cO_X$ along $f$, define a meromorphic distribution $Z_{w_1, w_2}$ on $X$ as follows. Under the isomorphism \eqref{eqn: malgrange isomorphism}, we may write
\[ \sigma(w_1) = \sum_{i=0}^{k_1} m_i s^i f^s, \quad \sigma(w_2) = \sum_{j=0}^{k_2} m_j's^j f^s\]
in $(\cO_X)_f[s]f^s$, with $m_i, m'_i \in (\cO_X)_f$. For a (compactly supported) test form $\varphi$, we set
\begin{equation} \label{eqn:zeta function for elements in graph embedding}
\langle \varphi, Z_{w_1, w_2}(s)\rangle = Z_{w_1, w_2}(\varphi; s) \colonequals \int_X \sigma(w_1)\overline{\sigma(w_2)}\varphi=\sum_{i, j}s^{i + j}\int_X |f|^{2s}m_i\cdot \overline{m_j'}\cdot \varphi.
\end{equation}
Here we use the convention that the complex conjugate of an element of $(\cO_X)_f[s]f^s$ conjugates the coefficients and $f$, and leaves $s$ alone. Note that in the special case $w_1 = w_2 = 1\otimes 1$, we recover $Z_f$ from the introduction as the meromorphic extension of 
\begin{equation}\label{eq: original zeta}
    Z_f(\varphi; s) := Z_{1\otimes 1, 1 \otimes 1}(\varphi; s) = \int_X |f|^{2s} \varphi,
\end{equation}
by identifying test forms with test functions using the standard volume form on $\mb{C}^n$.

Since under the isomorphism \eqref{eqn: malgrange isomorphism} $t$ acts as $s\mapsto s+1$, one can directly check that
\begin{equation}\label{eqn: t is compatible with zeta function}
Z_{t\cdot w_1,t\cdot w_2}(s)=Z_{w_1,w_2}(s+1).
\end{equation}
Arguing as in \cite[\S 12.5.b]{MHMproject} (see also the proof of \eqref{eqn: mero ext of Zmm}), the existence of $b$-functions implies that $Z_{w_1, w_2}$ extends meromorphically to $s \in \mb{C}$ and, furthermore, if $w_1\in V^{\alpha}\iota_{+}(\cO_X)_f$ and $w_2 \in \iota_+ (\cO_X)_f$, then
\begin{equation}\label{eqn: bound of pole order}
\textrm{the poles of $Z_{w_1,w_2}$ are $\leq -\alpha$}, \quad \mbox{the pole order at } s=-\alpha \mbox{ is } \leq   \mult_{s=-\alpha}b_{w_1}(s).
\end{equation}
The idea to consider such zeta functions was first suggested to the third named author by Musta\c{t}\u{a}--Popa \cite{Mustataprivate}.

The zeta functions $Z_{w_1, w_2}$ are related to the polarizations on nearby cycles for the Hodge module $\mc{M} = \mc{O}_X$, equipped with the trivial variation of Hodge structure (i.e., $F_0\mc{O}_X = \mc{O}_X$, $F_{-1}\mc{O}_X = 0$) and trivial polarization $S$ given by
\begin{equation}\label{eqn: trivial polarization}
\langle \varphi, S(g, h) \rangle = \int_X g \bar{h} \varphi.\end{equation}
In this case, we have the following. 

\begin{lemma}\label{lemma: local Archimedean zeta function and Mellin transform}
    For any $w_1,w_2\in \iota_{+}\cO_X$, one has $F_{w_1,w_2}(\varphi;s) = Z_{w_1,w_2}(\varphi;s)$.
\end{lemma}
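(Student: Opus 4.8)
The plan is to prove the identity by a direct computation, after two simplifications. Both sides are additive and $\mb{C}$-linear in $w_1$ and additive and conjugate-linear in $w_2$: for $F_{w_1,w_2}$ this is the Hermitian $\sD$-sesquilinearity of $\iota_+S$, and for $Z_{w_1,w_2}$ it follows from $\C$-linearity of the Malgrange isomorphism \eqref{eqn: malgrange isomorphism} and the formula \eqref{eqn:zeta function for elements in graph embedding}. Since $\iota_+\cO_X=\bigoplus_{\ell\ge0}\cO_X\otimes\partial_t^\ell$, I would therefore reduce to the case $w_1=g\otimes\partial_t^{k_1}$, $w_2=h\otimes\partial_t^{k_2}$ with $g,h\in\cO_X$ and $k_1,k_2\ge 0$. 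For such elements the right-hand side is immediate: under \eqref{eqn: malgrange isomorphism}, $\sigma(g\otimes\partial_t^{k})=(g/f^{k})\prod_{j=0}^{k-1}(-s+j)\,f^s$, so \eqref{eqn:zeta function for elements in graph embedding} gives, for $\Re s\gg0$,
\[ Z_{w_1,w_2}(\varphi;s)=\Big(\prod_{j=0}^{k_1-1}(-s+j)\Big)\Big(\prod_{j=0}^{k_2-1}(-s+j)\Big)\int_X\frac{g\,\overline h}{f^{k_1}\,\overline f^{\,k_2}}\,|f|^{2s}\,\varphi.\]

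For the left-hand side the idea is to push the $\partial_t$'s off the $\sD$-module and onto the test form. Set $T:=\iota_+S(1\otimes 1,1\otimes 1)$, a distribution on $X\times\mb{C}$ supported on the graph $Z=\{t=f(x)\}$. Since $\iota_+S$ is $\sD_{X\times\mb{C}}$-linear in the first variable and $\overline{\sD}_{X\times\mb{C}}$-linear in the second, and $g\otimes\partial_t^{k_1}=\partial_t^{k_1}(\widetilde g\cdot(1\otimes1))$, $h\otimes\partial_t^{k_2}=\partial_t^{k_2}(\widetilde h\cdot(1\otimes1))$ with $\widetilde g,\widetilde h$ the pullbacks to $X\times\mb{C}$, we get $\iota_+S(w_1,w_2)=\widetilde g\,\overline{\widetilde h}\cdot\partial_t^{k_1}\partial_{\bar t}^{\,k_2}T$. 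For $\Re s\gg0$ the kernel $|t|^{2s}$ is of class $C^{k_1+k_2}$, so I would integrate by parts to move $\partial_t^{k_1}\partial_{\bar t}^{\,k_2}$ onto $|t|^{2s}\eta(t-f)\varphi\wedge d\mathrm{Vol}_{\mb{C}}$, picking up $(-1)^{k_1+k_2}$. Expanding by Leibniz, every term with a derivative on $\eta(t-f)$ vanishes on $Z$ (there $\eta\equiv 1$, so its first $t$- and $\bar t$-derivatives vanish), and $\varphi\wedge d\mathrm{Vol}_{\mb{C}}$ is $t$-independent, so only the term with all derivatives on $|t|^{2s}$ survives; using $\partial_t^{k_1}\partial_{\bar t}^{\,k_2}|t|^{2s}=\big(\prod_{j=0}^{k_1-1}(s-j)\big)\big(\prod_{j=0}^{k_2-1}(s-j)\big)\,t^{-k_1}\bar t^{-k_2}|t|^{2s}$ and the identity $(-1)^{k}\prod_{j=0}^{k-1}(s-j)=\prod_{j=0}^{k-1}(-s+j)$, this will match the display above once $T$ is identified. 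Meromorphic continuation then gives the identity for all $s$.

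The remaining point is to pin down $T=\iota_+S(1\otimes1,1\otimes1)$ explicitly: one should show $\langle\psi\cdot\varphi\wedge d\mathrm{Vol}_{\mb{C}},\,T\rangle=\int_X\psi\big(x,f(x)\big)\,\varphi(x)$ for every smooth function $\psi$ on $X\times\mb{C}$. This comes from unwinding the pushforward polarization \eqref{eq:polarization pushforward} with $c=1$: identifying $1\otimes1$ with the canonical generator of $\iota_*(\cO_X\otimes\omega_{X/X\times\mb{C}})$ and using that the normalization in \eqref{eq:polarization pushforward} is arranged (see the discussion of local volume forms following it) precisely so that $2\pi\sqrt{-1}\,\big(\xi\contract\overline{\xi}\contract(\varphi\wedge d\mathrm{Vol}_{\mb{C}})\big)$ restricts on the graph to $\varphi$. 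Equivalently this step is the content of \cite[\S12.5.b]{MHMproject} applied to the trivial Hodge module $\cO_X$, and in particular it already recovers the base case $F_{1\otimes1,1\otimes1}(\varphi;s)=\int_X|f|^{2s}\varphi=Z_f(\varphi;s)$ of \eqref{eq: original zeta}. I expect this last step — getting the factor $(2\pi\sqrt{-1})^c$, the sign $(-1)^{c(c-1)/2}$, and the identification of $1\otimes 1$ inside $\iota_*(\cO_X\otimes\omega_{X/X\times\mb{C}})$ exactly right — to be the only subtle part; the reduction, the Malgrange computation, and the (legitimate, for $\Re s$ large) distributional integration by parts are routine bookkeeping.
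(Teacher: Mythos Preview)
Your argument is correct. Both your proof and the paper's share the same base case (identifying $T=\iota_+S(1\otimes 1,1\otimes 1)$ as the integration current along the graph, which is what the paper means by ``holds by definition'' for $w_1,w_2\in\iota_*(\cO_X\otimes 1)$), and both rely on the observation that derivatives of $\eta(t-f)$ vanish in a neighborhood of the graph, hence on the support of $\iota_+S$. The difference is in the inductive mechanism: you decompose in the basis $g\otimes\partial_t^k$ and move $\partial_t^{k_1}\partial_{\bar t}^{\,k_2}$ onto the test form in one shot, producing the explicit polynomial factor $\prod(-s+j)$ that matches the Malgrange formula. The paper instead shows the single relation $F_{sw_1,w_2}=sF_{w_1,w_2}$ (and its conjugate) by one integration by parts with $t\partial_t$, deducing the identity on the subspace $\mb{C}[s]\cdot\iota_*(\cO_X\otimes 1)$, and then uses the $t$-shift $F_{tw_1,tw_2}(s)=F_{w_1,w_2}(s+1)$ to reach arbitrary $w_1,w_2$. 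Your route is more hands-on but self-contained; the paper's use of the $s$-action avoids tracking the combinatorial factor explicitly at the cost of the extra $t^k$ step at the end. The subtle point you flag --- checking the normalization in \eqref{eq:polarization pushforward} so that $\langle\psi\cdot\varphi\wedge d\mathrm{Vol}_{\mb{C}},T\rangle=\int_X\psi(x,f(x))\varphi$ --- is present in both proofs and is indeed the only place requiring care.
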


\begin{proof}
The result holds by definition if $w_1, w_2 \in \iota_*(\mc{O}_X \otimes 1) \subseteq \iota_+\mc{O}_X$. Moreover, writing $s = -\partial_t t$, we have by $\sD_{X \times \mb{C}}$-linearity of $\iota_+S$ that
\begin{align*}
F_{sw_1, w_2}(\varphi; s) &= \langle |t|^{2s} \eta(t - f) \varphi \wedge d\mathrm{Vol}_\mb{C}, \iota_+S(-\partial_t t w_1, w_2) \rangle \\
&= \langle |t|^{2s} \eta(t - f) \varphi \wedge d\mathrm{Vol}_\mb{C}\cdot (-\partial_t t), \iota_+S(w_1, w_2) \rangle \\
&= \langle (t\partial_t |t|^{2s} \eta(t - f)) \varphi \wedge d\mathrm{Vol}_\mb{C},\iota_+S(w_1, w_2) \rangle \\
&= s F_{w_1, w_2}(\varphi; s),
\end{align*}
where we have used that $\eta(t - f)$ has vanishing derivatives in a neighborhood of the support of $\iota_+S(w_1, w_2)$. Similarly, $F_{w_1, sw_2}(\varphi; s) = sF_{w_1, w_2}(\varphi; s)$. Since the same holds for $Z_{w_1, w_2}$, we have $F_{w_1, w_2}(\varphi; s) = Z_{w_1,w_2}(\varphi;s)$ for $w_1, w_2 \in \mb{C}[s] \cdot \iota_*(\mc{O}_X \otimes 1)$. For general $w_1, w_2 \in \iota_+\mc{O}_X$, pick $k \geq 0$ such that $t^k w_1, t^k w_2 \in \mb{C}[s] \cdot i_*(\mc{O}_X \otimes 1)$, so by \eqref{eqn: t is compatible with zeta function} we conclude
\[ F_{w_1, w_2}(\varphi; s) = F_{t^k w_1, t^k w_2}(\varphi; s - k) = Z_{t^k w_1, t^k w_2}(\varphi; s - k) = Z_{w_1, w_2}(\varphi; s).\qedhere\]
\end{proof}

\section{Proof of main results}\label{sec: proof of main results}

In this section, we apply the results from the previous sections to give the proofs of our main results. In \S\ref{subsec:hodge}, we prove a general result (Theorem \ref{thm: sheaf theoretic loeser}) relating Hodge filtrations to poles of zeta functions and use it to prove Theorem \ref{thm: producing poles}. In \S\ref{subsec:minimal exponent}, we prove Theorem \ref{thm: Loesers conjecture}. In \S\ref{subsec:V-filtration theorem}, we prove Theorem \ref{thm: analytic charaterization of Vfiltration} characterizing the $V$-filtration via the zeta function. In \S\ref{subsec:hodge ideals} we prove Theorems \ref{thm:intro Hodge ideals} and \ref{thm:intro weighted ideals} on the (weighted) higher multiplier ideals. Finally, in \S\ref{subsec:loeser counterexample}, we give the proof of Theorem \ref{thm: counterexample to Loeser}, which shows that not every root of $b_f(s)$ need be a pole of $Z_f$, as well as Corollary \ref{cor: Budur Walther question for minimal exponent}.

\subsection{Multiplicities, pole orders and the Hodge filtration} \label{subsec:hodge}

We begin with the following general result linking zeros of $b$-functions to poles of Archimedean zeta functions.  As before, let $X$ be a complex manifold and $f : X \to \mb{C}$ a holomorphic function. For $w \in \iota_+(\mc{O}_X)_f$, let us write the zeta function from \eqref{eqn:zeta function for elements in graph embedding} as $Z_w\colonequals Z_{w,w}$. For $\alpha \in \mb{Q}$ such that $b_w(-\alpha) = 0$, we define
\begin{equation}\label{eqn: center of alpha}C(w,\alpha) \colonequals \{ x \in X \mid \mult_{s = -\alpha} b_{w, x}(s) = \mult_{s = -\alpha} b_w(s)\} \subseteq f^{-1}(0),\end{equation}
where $b_{w, x}(s)$ is the local $b$-function of $w$ near $x$. 

\begin{thm}\label{thm: sheaf theoretic loeser}
Let $\alpha\geq 0$ and let $k$ be the smallest integer such that $F_{k+1}\gr^{\alpha}_V\iota_{+}\cO_X\neq 0$. Let $w \in F_{k + 1} V^{\alpha}\iota_+\cO_X$ and $\varphi$ be a local volume form such that $0 \neq [w] \in \gr_V^\alpha \iota_+\cO_X$ and the support of $\varphi$ intersects $C(w,\alpha)$. Then the following hold.
   \begin{enumerate}
       \item \label{itm:sheaf theoretic loeser 1}If $\alpha>0$, then $s=-\alpha$ is a pole of $Z_{w}(\varphi;s)$ of order equal to $\mult_{s=-\alpha}b_w(s)$.
       \item \label{itm:sheaf theoretic loeser 2}If $\alpha=0$ and $w'\in V^{1}\iota_{+}\cO_X$ satisfies $\d_t[w']=[w]\in \gr^0_V\iota_{+}\cO_X$, then $s=-1$ is a pole of $Z_{w'}(\varphi;s)$ of order equal to $\mult_{s=0}b_w(s)+1$.
   \end{enumerate}
\end{thm}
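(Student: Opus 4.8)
The plan is to reduce the statement to the positivity of the polarization on the lowest piece of the Hodge filtration, via Corollary \ref{cor:HL lowest piece is positive} applied to the polarized Hodge--Lefschetz module $(\gr_V^\alpha\iota_+\cO_X, F_\bullet, \op{N}, \gr_V^\alpha(S))$ attached to $\cM = \cO_X$ with its trivial polarization \eqref{eqn: trivial polarization}. Throughout I will use Lemma \ref{lemma: local Archimedean zeta function and Mellin transform} to identify the Mellin-type function $F_{w,w}(\varphi;s)$ with the zeta function $Z_w(\varphi;s)$, so that by \eqref{eqn: definition of polarization on grValpha} the residue at $s = -\alpha$ of $Z_w(\varphi;s)$ computes $\langle \varphi, \gr_V^\alpha(S)([w],[w])\rangle$ when $\alpha>0$.

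For part \eqref{itm:sheaf theoretic loeser 1}: set $r = \mult_{s=-\alpha}b_w(s) - 1$. By Corollary \ref{cor:Nmu}, the class $[w]$ has monodromy weight exactly $r$ with respect to $\op{N} = s+\alpha$ (it lies in $\ker\{(s+\alpha)^{r+1}\}$ but not $\ker\{(s+\alpha)^r\}$, once one notes $\mult b_w \geq 1$ since $b_w(-\alpha)=0$; the case $r=0$ is allowed). Since $w \in F_{k+1}V^\alpha\iota_+\cO_X$ with $k$ minimal so that $F_{k+1}\gr_V^\alpha\iota_+\cO_X \neq 0$, the class $[w]$ lies in the lowest nonzero piece $F_{k+1}\gr_V^\alpha\iota_+\cO_X$ of the Hodge filtration, and $F_k\gr_V^\alpha\iota_+\cO_X = 0$. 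Now apply Corollary \ref{cor:HL lowest piece is positive} with $p = k+1$: for the local volume form $\varphi$,
\[ \pm\langle \varphi, \gr_V^\alpha(S)([w], \op{N}^r[w])\rangle \geq 0, \]
with equality if and only if $\varphi$ vanishes on the support of $[w]$. Since the support of $\varphi$ meets $C(w,\alpha)$, and $C(w,\alpha)$ is precisely the locus where the local $b$-function still has a root of order $r+1$ at $-\alpha$, i.e. where $\op{N}^r[w]$ is nonzero, the form $\varphi$ does \emph{not} vanish on $\supp[w]$; hence $\langle \varphi, \gr_V^\alpha(S)([w],\op{N}^r[w])\rangle \neq 0$. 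Expanding $\op{N}^r = (s+\alpha)^r$ and using that $(s+\alpha)^{r+1}$ annihilates $[w]$ in $\gr_V^\alpha$, one reads off from \eqref{eqn: definition of polarization on grValpha} that $\Res_{s=-\alpha}\big((s+\alpha)^r Z_w(\varphi;s)\big)$ is, up to a nonzero constant and complex conjugation bookkeeping, this pairing; combined with the a priori bound \eqref{eqn: bound of pole order} that the pole order at $s=-\alpha$ is at most $\mult_{s=-\alpha}b_w(s) = r+1$, this forces the pole order to be exactly $r+1$. I should double-check that $C(w,\alpha)$ indeed equals the support of $\op{N}^r[w]$ as a coherent sheaf — this is the identification I would spell out carefully using Corollary \ref{cor:Nmu} applied locally at each point $x$, together with the fact that $\mult_{s=-\alpha}b_{w,x}(s)$ equals the local version of the same invariant.

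For part \eqref{itm:sheaf theoretic loeser 2}: here $\alpha = 0$, so I transport the problem to $\gr_V^1$ via $\op{can} = -\partial_t$ and $\op{var} = t$ of Proposition \ref{prop: relation between polarization grVo and grV1}. Given $w' \in V^1\iota_+\cO_X$ with $\partial_t[w'] = [w]$ in $\gr_V^0$, i.e. $\op{can}[w'] = -[w]$, I would instead track $[w']$ inside $\gr_V^1\iota_+\cO_X$. By \eqref{eqn: shifts of Hodge} the Hodge filtration on $\gr_V^0$ is shifted by $1$ relative to $\phi_{f,1}$, and $w \in F_{k+1}V^0\iota_+\cO_X$ means $[w]$ sits in $F_{k}\phi_{f,1}\cO_X = F_{k+1}\gr_V^0$; pulling back through $\op{can}$ (which shifts $F_\bullet$ by one) one checks $[w']$ lies in $F_{k+1}\gr_V^1$ and, by minimality of $k$ and the strictness of $\op{can}$, in the lowest nonzero piece there. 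The relevant $b$-function identity is $b_{w'}(s) = b_w(s-1)$ up to the factor $(s+1)$ coming from $\op{var}\circ\op{can} = \op{N}$ on $\gr_V^0$ versus $\gr_V^1$ (this is where the ``$+1$'' in $\mult_{s=0}b_w(s)+1$ enters, via Lemma \ref{lem:multroot} / Corollary \ref{cor:Nmu} and the $\op{can}/\op{var}$ bookkeeping, analogous to Proposition \ref{prop:sharp}), so the monodromy weight of $[w']$ in $\gr_V^1$ is $\mult_{s=0}b_w(s)$. Applying part \eqref{itm:sheaf theoretic loeser 1} — or directly Corollary \ref{cor:HL lowest piece is positive} on $\gr_V^1$ together with the sign relation $\gr_V^0(S)(m,\op{can}\,m') = -\gr_V^1(S)(\op{var}\,m, m')$ of Proposition \ref{prop: relation between polarization grVo and grV1} — yields that $s = -1$ is a pole of $Z_{w'}(\varphi;s)$ of order $\mult_{s=0}b_w(s) + 1$, provided $\varphi$ does not vanish on the support of $[w']$, which again follows from the hypothesis that $\supp\varphi$ meets $C(w,\alpha) = C(w,0)$ once I match $C(w,0)$ with the support of the top monodromy power of $[w']$ (equivalently of $[w]$).

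The main obstacle I anticipate is the precise bookkeeping in part \eqref{itm:sheaf theoretic loeser 2}: matching the Hodge filtration shift in \eqref{eqn: shifts of Hodge} against the filtration shift of $\op{can}$, pinning down the extra factor $(s+1)$ in the $b$-function of $w'$, and confirming that $C(w,0)$ is the correct support locus for the nonvanishing of the polarization pairing for $[w']$. The positivity input from Theorem \ref{thm:lowest piece is positive} / Corollary \ref{cor:HL lowest piece is positive} and the a priori pole-order bound \eqref{eqn: bound of pole order} are the two pillars; everything else is careful but routine translation between $\iota_+\cO_X$, the $V$-graded pieces, and the Mellin transform.
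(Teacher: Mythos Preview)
Your plan for part \eqref{itm:sheaf theoretic loeser 1} is essentially the paper's proof: identify the monodromy weight of $[w]$ via Corollary \ref{cor:Nmu}, invoke Corollary \ref{cor:HL lowest piece is positive} for the nonvanishing of the leading residue, and bound the pole order from above. Two minor points. First, the polarized Hodge--Lefschetz structure on $\gr_V^\alpha\iota_+\cO_X$ is only set up in the paper for $\alpha\in(0,1]$, so you should begin (as the paper does) by reducing to that range using the bijection $t:V^{\alpha-1}\to V^\alpha$ for $\alpha>1$, together with $b_{tw}(s)=b_w(s+1)$ and \eqref{eqn: t is compatible with zeta function}. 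Second, the locus you must show meets $\supp\varphi$ is the support of the class $[[w]]\in\op{P}\gr_\ell^{W(\op{N})}\gr_V^\alpha$ (this is what Corollary \ref{cor:HL lowest piece is positive} actually requires); identifying this with $C(w,\alpha)$ follows from applying Corollary \ref{cor:Nmu} fiberwise, exactly as you suspect.

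Part \eqref{itm:sheaf theoretic loeser 2} has a genuine gap. You propose to regard $[w']$ as an element of the lowest nonzero Hodge piece of $\gr_V^1\iota_+\cO_X$ and then apply Corollary \ref{cor:HL lowest piece is positive} (or part \eqref{itm:sheaf theoretic loeser 1}) there. But the hypothesis gives \emph{no} information about the Hodge level of $w'$: it only says $w'\in V^1\iota_+\cO_X$ with $\partial_t[w']=[w]$. Strictness of $\op{can}$ would let you replace $w'$ by some element of $F_kV^1$ with the same image under $\op{can}$, but the theorem is stated for an arbitrary such $w'$, and in any case ``minimality of $k$'' is a hypothesis about $\gr_V^0$, not $\gr_V^1$; there is no reason $F_k\gr_V^1\iota_+\cO_X=0$, so Corollary \ref{cor:HL lowest piece is positive} does not apply to $[w']$. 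Your proposed $b$-function relation $b_{w'}(s)\approx (s+1)b_w(s-1)$ is also not justified by the hypotheses (though the multiplicity statement $\mult_{s=-1}b_{w'}(s)=\ell+2$ does happen to follow from Corollary \ref{cor:Nmu} once one computes $\op{N}^{\ell+2}[w']=\op{var}\,\op{N}_0^{\ell+1}[w]=0$ and $\op{N}^{\ell+1}[w']\neq 0$).

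The paper's fix is to avoid $\gr_V^1$-positivity entirely. Using $\op{N}=\op{var}\circ\op{can}$ on $\gr_V^1$ and the adjunction of Proposition \ref{prop: relation between polarization grVo and grV1}, one computes directly
\[
\Res_{s=-1}(s+1)^{\ell+1}Z_{w'}(\varphi;s)
=\langle\varphi,\gr_V^1(S)([w'],\op{N}^{\ell+1}[w'])\rangle
=-\langle\varphi,\gr_V^0(S)([w],\op{N}^\ell[w])\rangle,
\]
and then applies Corollary \ref{cor:HL lowest piece is positive} to $[w]$ in $\gr_V^0$, where the lowest-piece hypothesis \emph{is} available. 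The upper bound on the pole order comes the same way: for $a\geq 1$ the analogous residue equals $-\langle\varphi,\gr_V^0(S)([w],\op{N}^{\ell+a}[w])\rangle$, which vanishes since $\op{N}^{\ell+1}[w]=0$ by Lemma \ref{lemma: lowest hodge is primitive}. This route needs nothing about the Hodge level of $w'$.
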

\begin{proof}
Observe that if $\alpha > 1$ then $t : V^{\alpha - 1}\iota_+\mc{O}_X \to V^\alpha \iota_+\mc{O}_X$ is an isomorphism, $b_{tw}(s) = b_w(s + 1)$, and $Z_{tw}(s) = Z_w(s + 1)$. So we may assume without loss of generality that $\alpha \in [0, 1]$. In this case, $(\gr_V^\alpha\iota_+\mc{O}_X, F_\bullet, \op{N}, \gr_V^\alpha(S))$ is a polarized Hodge-Lefschetz module. By Lemma \ref{lemma: lowest hodge is primitive} and Corollary \ref{cor:Nmu}, the section $[w]$ has monodromy weight $\ell \colonequals \mult_{s = -\alpha} b_w(s) - 1$ and defines a primitive element
\[ [[w]] \in \op{P}\gr_\ell^{W(\op{N})}\gr_V^\alpha\iota_+\mc{O}_X, \]
whose support is equal to $C(w,\alpha)$.

To prove \eqref{itm:sheaf theoretic loeser 1}, observe that by Corollary \ref{cor:HL lowest piece is positive}, \eqref{eqn: definition of polarization on grValpha} and Lemma \ref{lemma: local Archimedean zeta function and Mellin transform}, we have
\begin{align*}
0\neq \langle \varphi, \gr^{\alpha}_V(S)([w],\op{N}^{\ell}[w])\rangle\colonequals\Res_{s=-\alpha}(s+\alpha)^{\ell}F_{w,w}(\varphi;s)=\Res_{s=-\alpha}(s+\alpha)^{\ell}Z_{w}(\varphi;s),
\end{align*}
so $Z_w(\varphi; s)$ has a pole of order at least $\ell + 1$ at $s = -\alpha$. Conversely by \eqref{eqn: selfadjointness}
\[ \langle \varphi, \gr^{\alpha}_V(S)([w],\op{N}^{\ell+a}[w])\rangle=0, \quad \text{for $a > 0$},\]
since $\op{N}^{\ell + a}[[w]] = 0 \in \gr_{-\ell - 2a}^{W(\op{N})}\gr_V^\alpha\iota_+\mc{O}_X$, so the order of the pole is exactly $\ell + 1$ as claimed.

To prove \eqref{itm:sheaf theoretic loeser 2}, applying Proposition \ref{prop: relation between polarization grVo and grV1} and recalling that $\op{can} = -\d_t$, we have
\begin{align*}
\Res_{s = -1} (s + 1)^{\ell + 1} Z_{w'}(\varphi; s) &= \langle \varphi, \gr_V^1(S)([w'], \op{N}^{\ell + 1}[w'])\rangle \\
&= \langle \varphi, \gr_V^1(S)([w'], \op{var} \circ \op{N}^\ell \circ \op{can} [w'])\rangle \\
&= - \langle \varphi, \gr_V^0(S)(\op{can} [w'], \op{N}^\ell \circ \op{can} [w']) \rangle \\
&= - \langle \varphi, \gr_V^0(S)([w], \op{N}^\ell [w])\rangle \neq 0.
\end{align*}
Similarly, $\Res_{s=-1}(s+1)^{\ell+1+a}Z_{w'}(\varphi;s)=0$ for any integer $a\geq 1$, so $s=-1$ is a pole of $Z_{w'}(\varphi;s)$ of order $\ell + 2 = \mult_{s = 0}b_w(s) + 1$ as claimed.
\end{proof}

Theorem \ref{thm: sheaf theoretic loeser} implies Theorem \ref{thm: producing poles} from the introduction.

\begin{proof}[Proof of Theorem \ref{thm: producing poles}]
As in the statement of the theorem, let $\alpha \in [0, 1)$ satisfy $\gr_V^\alpha\iota_+\mc{O}_X \neq 0$ and let $k$ be the smallest integer such that $F_{k + 1} \gr_V^\alpha \iota_+\mc{O}_X \neq 0$.

If $\alpha> 0$, choose $w\in F_{k+1}V^{\alpha}\iota_{+}\cO_X$ such that $0\neq [w]\in F_{k+1}\gr^{\alpha}_V\iota_{+}\cO_X$ and write $w=\sum_{0\leq i\leq k} g_i \otimes \d_t^i$ with $g_i \in \cO_X$. Under the isomorphism \eqref{eqn: malgrange isomorphism}, $w$ is sent to
\[ \sum_{0\leq i\leq k} s^i\cdot \frac{h_i}{f^k}\cdot f^s\in (\cO_X)_f[s]f^s, \]
where $h_i\in \cO_X$ and hence
\[ Z_{w}(\varphi;s)=\sum_{0\leq i,j\leq k} s^{i+j}Z_{f}(h_i\overline{h_j}\varphi;s-k).\]
By Theorem \ref{thm: sheaf theoretic loeser}, $-\alpha$ is a pole of $Z_{w}(\varphi;s)$ of order $\mult_{s=-\alpha}b_w(s)$, so there must exist $i,j$ such that $-\alpha$ is a pole of $Z_{f}(h_i\overline{h_j}\varphi;s-k)$ of order at least $\mult_{s=-\alpha}b_w(s)$. In particular, $-\alpha-k$ is a pole of $Z_f$ of order at least 
\[\max\{\mult_{s=-\alpha}b_w(s) \mid w\in F_{k+1}V^{\alpha}\iota_{+}\cO_X\}=\min\{r \mid \op{N}^r\cdot F_{k+1}\gr^{\alpha}_V\iota_{+}\cO_X=0\},\]
by Lemma \ref{lemma: lowest hodge is primitive} and Corollary \ref{cor:Nmu}. As $\alpha\neq 0$, we obtain the same conclusion for $\tilde{Z}_f$.

Similarly, if $\alpha=0$, since $\mc{O}_X$ is a Hodge module with strict support $X$, the morphism
\[ \op{can} = -\partial_t : (\gr_V^1\iota_+\mc{O}_X, F_\bullet) \to (\gr_V^0\iota_+\mc{O}_X, F_{\bullet + 1})\]
is strict surjective. So we may choose $w' \in F_kV^1\iota_+\cO_X$ such that $\partial_t [w'] \in F_{k + 1}\gr_V^0\iota_+\mc{O}_X$ is non-zero. Applying the argument above to $w'$, we conclude that $-k$ is a pole of $Z_f$ of order at least
\[ \max\{ \mult_{s=0}b_w(s)+1 \mid w\in F_{k+1}\gr^0_V\iota_{+}\cO_X\}=\min\{r\mid N^r\cdot F_{k+1}\gr^0_V\iota_{+}\cO_X=0\}+1\geq 2, \]
and hence a pole of $\tilde{Z}_f$ with the desired bound on order.

Finally, suppose that $\alpha + k < \tilde{\alpha}_f + 1$. Since $\tilde{Z}_f$ has a pole at $ - \alpha - k$, we must have $\tilde{b}_f(-\alpha - k + m) = 0$ for some integer $m \geq 0$. Since $-\tilde{\alpha}_f$ is the largest root of $\tilde{b}_f(s)$, we must have $m = 0$, so $-\alpha - k$ is a root as claimed.
\end{proof}

Theorem \ref{thm: producing poles} implies the following criterion for poles of $\tilde{Z}_f$ in terms of more classical Hodge theory. Denote by $M_x$ the Milnor fiber of $f$ at $x\in X$. The reduced cohomology group $\tilde{H}^p(M_x,\Q)$ underlies a (rational) mixed Hodge structure which satisfies \cite[Proposition 5.2, (5.3.1)]{BS05}:
\[ \gr_F^j\tilde{H}^p(M_x,\mb{C})=0, \quad j<0, \textrm{ or } j>p.\]
Denote by $\tilde{H}^p(M_x,\mb{C})_{e^{-2\pi i\alpha}}$ the eigenspace of the local monodromy, with $\alpha \in [0,1)$. 
\begin{prop}\label{prop: strengthen Barlet}
Suppose $\tilde{H}^p(M_x,\mb{C})_{e^{-2\pi i\alpha}}\neq 0$ for some $\alpha\in [0,1)$. Let $j\in [0,p]$ be an integer such that $F^j\tilde{H}^p(M_x,\mb{C})_{e^{-2\pi i\alpha}}\neq 0$, then $\tilde{Z}_f$ has a pole at 
\[ \begin{cases}
    -\alpha-(p-j), \quad &\textrm{if $\alpha\in (0,1)$},\\
    -\alpha-(p-j)-1, \quad &\textrm{if $\alpha=0$}.
\end{cases}\]
\end{prop}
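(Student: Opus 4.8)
The plan is to transfer the hypothesis on the mixed Hodge structure on the reduced Milnor fiber cohomology into a statement about the Hodge filtration on $\gr_V^\alpha\iota_+\mc{O}_X$ at the point $x$, and then invoke Theorem \ref{thm: producing poles}. The key dictionary is the standard identification, under the Riemann--Hilbert correspondence, of $\bigoplus_{\alpha\in[0,1)}\gr_V^\alpha\iota_+\mc{O}_X$ (with its Hodge filtration, suitably indexed) with the vanishing cycle Hodge module of $f$, whose stalk cohomology at $x$ computes $\tilde{H}^\bullet(M_x,\mb{C})$ together with its mixed Hodge structure — this is exactly the content of the results of Saito cited around Theorem \ref{thm: producing poles}, and of \cite{BS05}. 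First I would recall the precise numerical normalization: a class in $F^j\tilde H^p(M_x)_{e^{-2\pi i\alpha}}$ corresponds, after passing to the associated $\sD$-module $\gr_V^\alpha\iota_+\mc{O}_X$ and the graph-embedding shift by $1$ (cf.\ \eqref{eqn: convention of Hodge filtration on the embedding} and \eqref{eqn: shifts of Hodge}), to a class living in $F_{k+1}\gr_V^\alpha\iota_+\mc{O}_X$ localized at $x$, where $k = p - j$. The bookkeeping here — matching the cohomological degree $p$, the Hodge level $j$, the codimension/dimension shifts, and the convention that the Hodge filtration on $\iota_+\mc{O}_X$ starts in degree $1$ — is what needs to be done carefully.

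Granting this, the argument runs as follows. The nonvanishing $F^j\tilde H^p(M_x)_{e^{-2\pi i\alpha}}\neq 0$ shows that, for this $\alpha$, the smallest integer $k_0$ with $F_{k_0+1}\gr_V^\alpha\iota_+\mc{O}_X\neq 0$ satisfies $k_0 \le p - j$; on the other hand, the bound $\gr_F^{j'}\tilde H^{p'}(M_x)=0$ for $j'<0$ or $j'>p'$ from \cite[Proposition 5.2]{BS05}, together with the fact that taking stalks at $x$ can only decrease (not increase) the Hodge index, forces $k_0 = p-j$ to actually be realized — more precisely, the hypothesis pins down a nonzero class in $F_{(p-j)+1}\gr_V^\alpha\iota_+\mc{O}_X$ supported near $x$. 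Since $x\in f^{-1}(0)$ has $\gr_V^\alpha\iota_+\mc{O}_X$ nonzero near it, Malgrange's theorem gives that $b_f(s)$ has a root in $-\alpha + \Z_{\le 0}$; combined with $k_0 = p-j$ being exactly the index appearing in Theorem \ref{thm: producing poles}, that theorem yields a pole of $\tilde Z_f$ at $-\alpha - (p-j)$ when $\alpha\in(0,1)$, and at $-\alpha-(p-j)-1 = -(p-j)-1$ when $\alpha = 0$ (the extra shift by $1$ in the $\alpha=0$ case being built into the statement of Theorem \ref{thm: producing poles} via the $\op{can}$ morphism). This is exactly the claimed dichotomy.

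The main obstacle I anticipate is not conceptual but the index translation in the first step: one must be scrupulous about (i) the shift in \eqref{eqn: shifts of Hodge} between $F_\bullet\phi_{f,1}\mc{M}$ and $F_{\bullet+1}\gr_V^0\mc{M}$, which is precisely why the $\alpha = 0$ case acquires an extra $-1$; (ii) the relation between the $\sD$-module Hodge filtration indexing ($F_{k+1}$) and the classical Hodge filtration indexing ($F^j$) on Milnor fiber cohomology, where the correct statement is $j \leftrightarrow$ (top weight of the de Rham complex) $- k$, so that $F^j\neq 0$ at cohomological degree $p$ translates to $F_{k+1}\gr_V^\alpha\neq 0$ with $k = p-j$; and (iii) checking that passing from the global module to a stalk at $x$ (equivalently, choosing a local volume form $\varphi$ whose support meets $C(w,\alpha)$, or at least a neighborhood of $x$ where the relevant Hodge class survives) is legitimate — here one uses that $\gr_V^\alpha\iota_+\mc{O}_X$ is a polarized Hodge module, so its Hodge filtration is compatible with restriction to open sets, and a nonzero stalk class at $x$ lifts to a nonzero local section. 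Once these normalizations are fixed, the appeal to Theorem \ref{thm: producing poles} is immediate, and the fact that the resulting exponent is also a root of $\tilde b_f(s)$ in the range $\alpha + k < \tilde\alpha_f + 1$ (not needed for the Proposition as stated, but consistent with it) follows as in the proof of Theorem \ref{thm: producing poles}.
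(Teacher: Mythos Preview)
Your overall strategy---translate the Milnor fiber hypothesis into a statement about $F_\bullet\gr_V^\alpha\iota_+\mc{O}_X$ and then invoke Theorem \ref{thm: producing poles}---is exactly the paper's, but two of your steps are not right.

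\textbf{The passage from stalk to module.} The identity $(\tilde H^p(M_x,\mb{C})_{e^{-2\pi i\alpha}}, F^\bullet) \cong (\mc{H}^{p-n+1}i_x^*\phi_{f,e^{-2\pi i\alpha}}\mc{O}_X, F_{-\bullet})$ involves the \emph{mixed-Hodge-module pullback} $i_x^*$ to a point, not the stalk of the underlying $\sD$-module or restriction to an open neighborhood. You cannot conclude that ``a nonzero stalk class at $x$ lifts to a nonzero local section'' with the same Hodge level: the Hodge filtration shifts nontrivially under closed pullback. The paper handles this with a separate lemma (Lemma \ref{lem:hodge restriction}): if $F_k\mc{M}=0$ then $F_{k-p-r}\mc{H}^p i^*\mc{M}=0$ for $i$ a closed embedding of codimension $r$. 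Applying its contrapositive with $r=n$ and cohomological degree $p-n+1$ gives $F_{p-j+1}\phi_{f,e^{-2\pi i\alpha}}\mc{O}_X\neq 0$, which is what you need. Your discussion in (iii) of ``compatibility with restriction to open sets'' does not address this.

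\textbf{The minimal index.} Theorem \ref{thm: producing poles} produces a pole at $-\alpha-k_0$ only for the \emph{smallest} $k_0$ with $F_{k_0+1}\gr_V^\alpha\iota_+\mc{O}_X\neq 0$. Your argument correctly yields $k_0\le p-j$, but your attempt to force equality via the vanishing $\gr_F^{j'}\tilde H^{p'}(M_x)=0$ for $j'>p'$ does not work: that bound constrains the stalk, not the ambient module, and in general $k_0$ may well be strictly smaller. The paper closes this gap differently: it uses that if $\beta$ is a pole of $\tilde Z_f$ then so is $\beta-1$, so a pole at $-\alpha-k_0$ forces one at $-\alpha-(p-j)$ as well. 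You omit this step entirely.

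Finally, the extra $-1$ in the $\alpha=0$ case is not ``built into Theorem \ref{thm: producing poles} via $\op{can}$''; that theorem gives a pole at $-\alpha-k_0$ uniformly. The shift comes from \eqref{eqn: shifts of Hodge}: $F_\bullet\phi_{f,1}\mc{O}_X=F_{\bullet+1}\gr_V^0\iota_+\mc{O}_X$, so $F_{p-j+1}\phi_{f,1}\mc{O}_X\neq 0$ translates to $k_0\le p-j+1$ rather than $k_0\le p-j$.
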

\begin{proof}
It follows from \cite[(4.1.1),(5.1.1)]{BS05} that 
\begin{equation}\label{eqn: cohomology of Milnor fiber} 
 (\tilde{H}^p(M_x,\Q)_{e^{-2\pi i\alpha}},F^{\bullet})=(\cH^{p-n+1}i^{\ast}_x\phi_{f,e^{-2\pi i\alpha}}\mc{O}_X,F_{-\bullet}), \quad \text{for $\alpha \in [0,1)$,}
\end{equation}
where we write $\phi_{f, e^{-2\pi i \alpha}} = \psi_{f, e^{-2\pi i\alpha}}$ if $\alpha \in (0, 1)$. Thus, by Lemma \ref{lem:hodge restriction} below, we have
\[ F_{-j + p + 1}\phi_{f, e^{-2\pi i\alpha}}\mc{O}_X \neq 0.\]
Since by \eqref{eqn: shifts of Hodge} we have
\[ F_\bullet \phi_{f, e^{-2\pi i \alpha}}\mc{O}_X = \begin{cases} F_\bullet \gr_V^\alpha \iota_+\mc{O}_X, & \text{if $\alpha \in (0, 1)$}, \\ F_{\bullet + 1}\gr_V^0\iota_+\mc{O}_X, & \text{if $\alpha = 0$},\end{cases}\]
we deduce the proposition using Theorem \ref{thm: producing poles} and the fact that if $\beta$ is a pole of $\tilde{Z}_f$ then so is $\beta - 1$. 
\end{proof}

\begin{lemma} \label{lem:hodge restriction}
Let $\mc{M}$ be a mixed Hodge module on $X$ and $i \colon Z \to X$ the inclusion of a closed complex submanifold of codimension $r$. If $F_k \mc{M} = 0$ then $F_{k - p - r}\mc{H}^p i^*\mc{M} = 0$. 
\end{lemma}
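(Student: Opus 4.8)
The plan is to reduce the statement to the behaviour of the Hodge filtration under the two basic operations out of which the inverse image $i^*$ is built: restriction to a smooth hypersurface (an iterated codimension-one statement) and, for the hypersurface case itself, the relation between $i^*$ and the $V$-filtration along a local defining equation. I would first treat the case $r = 1$, where $Z = \{g = 0\}$ for a local coordinate function $g$, and then iterate. For $r = 1$, recall that $i^*\mc{M}$ is computed (up to shift) from the graph embedding $\iota_g : X \to X \times \mb{C}$ and the $V$-filtration along the new coordinate: one has $\mc{H}^{p}i^*\mc{M}$ expressed in terms of $\gr_V^0$ and $\gr_V^1$ of $\iota_{g,+}\mc{M}$ (the nearby/vanishing cycle packages), placed in cohomological degrees $0$ and $-1$ respectively, with $\op{can}$ and $\op{var}$ as the relevant maps. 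Concretely, using the standard formulas (e.g. \cite[3.2.1]{Saito88} and the conventions of \cite{MHMproject}), $i^*\mc{M}$ sits in a triangle built from $\psi_g\mc{M}$ and $\phi_{g,1}\mc{M}$, and the Hodge filtration on these pieces is induced from $F_\bullet \iota_{g,+}\mc{M}$ via \eqref{eqn: convention of Hodge filtration on the embedding} and \eqref{eqn: shifts of Hodge}.

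The key input is then the vanishing $F_k\mc{M} = 0 \Rightarrow F_{k+1}\iota_{g,+}\mc{M}$ is concentrated in the $\partial_t^0$ part, so that $F_{k+1}\gr_V^\alpha\iota_{g,+}\mc{M}$ vanishes for $\alpha > 0$ and $F_{k+1}\gr_V^0\iota_{g,+}\mc{M}$ vanishes (the subquotient defining $\phi_{g,1}$ with its shift). Tracking the cohomological shifts: $\mc{H}^0 i^*\mc{M}$ is a quotient of (a piece of) $\gr_V^0$ with Hodge filtration shifted by the codimension-one normalization, and $\mc{H}^{-1}i^*\mc{M}$ comes from $\gr_V^1$; in both cases the hypothesis $F_k\mc{M}=0$ forces $F_{k-1}\mc{H}^0 i^*\mc{M} = 0$ and $F_{k-1}\mc{H}^{-1}i^*\mc{M} = 0$, which is exactly the claim $F_{(k) - (p) - (1)}\mc{H}^p i^*\mc{M} = 0$ for $p \in \{0,-1\}$ (and trivially for other $p$ since those cohomologies vanish). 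Factoring $i$ as a composition of $r$ smooth hypersurface inclusions and using the Grothendieck spectral sequence $\mc{H}^a i_1^* \mc{H}^b i_2^* \cdots \Rightarrow \mc{H}^{a+b}i^*$, together with strictness of the Hodge filtration for mixed Hodge modules, then yields the general codimension-$r$ statement by induction: each hypersurface step drops the allowed top degree of $F_\bullet$ by one relative to $p$, and accumulates a total shift of $r$.

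The main obstacle I anticipate is bookkeeping the shift conventions correctly — the left-module Hodge filtration shift by codimension in \eqref{eqn: convention of Hodge filtration on the embedding}, the degree-$+1$ shift on the vanishing cycle part in \eqref{eqn: shifts of Hodge}, and the cohomological placement of $\psi$ versus $\phi$ in $i^*$ — so that the indices assemble into precisely $F_{k-p-r}$ and not an off-by-one variant. A clean way to avoid sign/shift errors is to verify the $r=1$, $p=0$ and $p=-1$ cases directly against the defining formulas in \cite{MHMproject}, perhaps citing the inverse image compatibility there, and then let the spectral sequence and induction do the rest; alternatively one can quote the known behaviour of $F_\bullet$ under $i^*$ for mixed Hodge modules (it is a standard consequence of Saito's construction that $i^*$ has cohomological amplitude $[-r,0]$ and that $F_\bullet$ on $\mc{H}^p i^*\mc{M}$ is controlled by $F_{\bullet}\mc{M}$ with the stated shift), reducing the lemma to a citation plus the index check.
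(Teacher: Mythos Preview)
Your approach is exactly the paper's: reduce to $r=1$ by induction, then use $i_*i^*\mc{M} = [\psi_{g,1}\mc{M} \xrightarrow{\op{can}} \phi_{g,1}\mc{M}]$ in degrees $-1,0$ together with the Hodge shifts on nearby/vanishing cycles and the codimension-one shift under $i_*$. Your worry about bookkeeping is justified---for $p=-1$ the lemma asks for $F_k\mc{H}^{-1}i^*\mc{M}=0$, not $F_{k-1}$ as you wrote, and this follows because $F_k\mc{M}=0$ gives $F_{k+1}\iota_{g,+}\mc{M}=0$ outright (not merely ``concentrated in the $\partial_t^0$ part''), hence $F_{k+1}\psi_{g,1}\mc{M}=F_k\phi_{g,1}\mc{M}=0$.
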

\begin{proof}
By induction, it suffices to prove the lemma when $r = 1$. In this case, we have
\[ i_*i^*\mc{M} = [\psi_{g, 1}\mc{M} \xrightarrow{\op{can}} \phi_{g, 1}\mc{M}],\]
placed in degrees $-1$ and $0$, where $g$ is a local equation for $Z$. The lemma follows since $F_k \phi_{g, 1}\mc{M} = F_{k + 1}\psi_{g, 1}\mc{M} = 0$ and $i_*$ increases the lowest piece of $F_\bullet$ by $1$.
\end{proof}

\begin{remark}\label{remark: sharpen barlet}
Proposition \ref{prop: strengthen Barlet} gives a Hodge-theoretic improvement of \cite[Th\'eor\`eme 2]{Barlet86}, which shows that if $H^p(M_x,\Q)_{e^{-2\pi i\alpha}}\neq 0$ with $p \geq 1$ (in the case $\alpha = 0$ further requiring $\op{N} \neq 0$ on $H^p(M_x, \Q)_{1}$), then $-\alpha-p$ is a pole of $\tilde{Z}_f$. Proposition \ref{prop: strengthen Barlet} clearly implies this when $\alpha \neq 0$. When $\alpha = 0$, since  $\op{N}$ is of type $(-1, -1)$, the requirement $\op{N} \neq 0$ on $\tilde{H}^p(M_x, \Q)_{1}=H^p(M_x,\Q)_{1}$ implies that 
$\tilde{H}^p(M_x, \Q)_{1}$ has more than one non-zero Hodge number, and hence that we can take $j \geq 1$ in Proposition \ref{prop: strengthen Barlet}.

Barlet's result also gives a bound on the pole order in terms of the order of the monodromy operator $\op{N}$; using the techniques in the proof of Theorem \ref{thm: sheaf theoretic loeser}, it is possible to derive analogous bounds in the setup of Proposition \ref{prop: strengthen Barlet}.

\end{remark}

\subsection{The minimal exponent} \label{subsec:minimal exponent}

We now turn to Theorem \ref{thm: Loesers conjecture}. We begin with the following lemma.

\begin{lemma}\label{lem:mink}
    Write $\tilde{\alpha}_f=k+\alpha$ for a (unique) $k\in \mb{Z}_{\geq 0}$ and $\alpha\in [0,1)$. Then,
    \[ 0\neq [1\otimes \d_t^k]\in F_{k+1}\gr^{\alpha}_V\iota_{+}\cO_X, \quad F_{k}\gr^{\alpha}_V\iota_{+}\cO_X=0.\]
\end{lemma}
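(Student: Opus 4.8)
The statement is really a translation of the well-known characterization of the minimal exponent in terms of the Hodge filtration on $\iota_+\cO_X$. Recall that $\tilde\alpha_f$ is the negative of the largest root of $\tilde b_f(s)$, and that by Saito's theory (see also \cite{MPVfiltration}) the minimal exponent equals
\[ \tilde\alpha_f = \sup\{\,\gamma \mid f^{-\gamma}\in V^{>-\gamma}\text{-type condition}\,\},\]
which in the language of the Hodge filtration on $\iota_+\cO_X$ becomes: $\tilde\alpha_f$ is the largest $\gamma$ such that $1\otimes 1\in F_1 V^{\gamma}\iota_+\cO_X$ after suitable reindexing, equivalently the largest $\gamma = k+\alpha$ ($k\in\Z_{\geq 0}$, $\alpha\in[0,1)$) for which $1\otimes\d_t^k\in V^{\alpha}\iota_+\cO_X$, i.e.\ $[1\otimes\d_t^k]$ survives to a nonzero element of $\gr_V^\alpha$. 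So first I would recall this characterization precisely, citing the relevant statement in \cite{MPVfiltration} (or \cite{Saito93}); the content of the present lemma is then the two bookkeeping assertions about which graded piece $F_\bullet$ the class $[1\otimes\d_t^k]$ lies in.

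Concretely, the plan is as follows. By the convention \eqref{eqn: convention of Hodge filtration on the embedding}, $F_{k+1}\iota_+\cO_X=\sum_{0\le i\le k}\cO_X\otimes\d_t^i$ and $F_k\iota_+\cO_X=\sum_{0\le i\le k-1}\cO_X\otimes\d_t^i$, so $1\otimes\d_t^k\in F_{k+1}\iota_+\cO_X\setminus F_k\iota_+\cO_X$; moreover $F_k\gr_V^\alpha\iota_+\cO_X = (F_k\iota_+\cO_X\cap V^\alpha)/(F_k\iota_+\cO_X\cap V^{>\alpha})$. Using Sabbah's theorem \ref{thm: Sabbah} together with Proposition \ref{prop:sharp}, I would compute $b_{1\otimes\d_t^k}(s)=(s+1)\tilde b_f(s-k)$; its roots are the roots of $\tilde b_f$ shifted by $k$, together with $-1$. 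The largest root of $\tilde b_f(s-k)$ is $-\tilde\alpha_f+k = -\alpha$. Hence by Theorem \ref{thm: Sabbah} the smallest $\beta$ with $1\otimes\d_t^k\in V^\beta\iota_+\cO_X$ is $\beta=\min(\alpha,1)=\alpha$ (using $\alpha<1$), i.e.\ $1\otimes\d_t^k\in V^\alpha\iota_+\cO_X$ but $1\otimes\d_t^k\notin V^{>\alpha}\iota_+\cO_X$. The first assertion $1\otimes\d_t^k\notin V^{>\alpha}$ shows $[1\otimes\d_t^k]\ne 0$ in $\gr_V^\alpha$, and since $1\otimes\d_t^k\in F_{k+1}\iota_+\cO_X$ this nonzero class lies in $F_{k+1}\gr_V^\alpha\iota_+\cO_X$, giving the first displayed formula.

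For the second displayed formula $F_k\gr_V^\alpha\iota_+\cO_X=0$: suppose $w\in F_k\iota_+\cO_X\cap V^\alpha$, say $w=\sum_{0\le i\le k-1}g_i\otimes\d_t^i$. I would argue that the $b$-function of any such $w$ has no root equal to $-\alpha$ (indeed no root $\le -\alpha$ other than via lower-order terms), because the "top" contributing $\d_t$-degree that can produce the root $-\alpha$ is exactly $i=k$, matching $\tilde\alpha_f=k+\alpha$; more carefully, by Proposition \ref{prop:sharp} and the minimality of $\tilde\alpha_f$, for $i\le k-1$ one has $1\otimes\d_t^i\in V^{>\alpha}\iota_+\cO_X$ (since $b_{1\otimes\d_t^i}(s)=(s+1)\tilde b_f(s-i)$ has largest root $-\tilde\alpha_f+i = -\alpha - (k-i) < -\alpha$), and more generally $g\otimes\d_t^i\in V^{>\alpha}$ for $g\in\cO_X$, $i\le k-1$ — because $V^\bullet$ is compatible with the $\cO_X$-module structure and $g\otimes\d_t^i = g\cdot(1\otimes\d_t^i)$ with $g\in V^0$ (as $\cO_X\otimes 1\subseteq V^0$ and the action of functions preserves the filtration, or directly $b_{g\otimes\d_t^i}\mid b_{1\otimes\d_t^i}$ up to a standard argument). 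Hence $F_k\iota_+\cO_X\cap V^\alpha\subseteq V^{>\alpha}$, so $F_k\gr_V^\alpha\iota_+\cO_X=0$.

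The main obstacle I anticipate is the clean justification that $g\otimes\d_t^i\in V^{>\alpha}\iota_+\cO_X$ for all $g\in\cO_X$ and all $i\le k-1$; this is a statement about the whole submodule $F_k\iota_+\cO_X$, not just the generator $1\otimes\d_t^k$, and one must be careful that multiplying by a function $g$ cannot lower the $V$-index. This is controlled by the fact that $V^0\iota_+\cO_X$ is a subring (contains $\cO_X\otimes 1$), that $V^{>\alpha}$ is a $V^0$-module, and that $1\otimes\d_t^i\in V^{>\alpha}$ for $i\le k-1$ by the $b$-function computation above — so the argument closes, but it is the one spot where I would spell out the module structure rather than wave at it. Everything else is routine manipulation of \eqref{eqn: convention of Hodge filtration on the embedding}, Proposition \ref{prop:sharp}, and Theorem \ref{thm: Sabbah}.
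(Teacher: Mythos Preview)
Your argument is correct. For the first assertion ($0\neq [1\otimes\d_t^k]\in F_{k+1}\gr_V^\alpha\iota_+\cO_X$) your proof is identical to the paper's: both use Proposition~\ref{prop:sharp} to compute $b_{1\otimes\d_t^k}(s)=(s+1)\tilde b_f(s-k)$, observe that its largest root is $-\alpha$, and conclude via Theorem~\ref{thm: Sabbah}.

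For the second assertion ($F_k\gr_V^\alpha\iota_+\cO_X=0$) the approaches differ. The paper simply invokes \cite[Lemma~5.27]{SY23}, which states that $\tilde\alpha_f=\min\{j+\beta : \gr^F_{j+1}\gr_V^\beta\iota_+\cO_X\neq 0\}$; the vanishing is then immediate from minimality. You instead prove the stronger containment $F_k\iota_+\cO_X\subseteq V^{>\alpha}\iota_+\cO_X$ directly: for each $i\le k-1$, Proposition~\ref{prop:sharp} gives that the largest root of $b_{1\otimes\d_t^i}(s)$ is $-1$, so $1\otimes\d_t^i\in V^1\subseteq V^{>\alpha}$, and then $g\otimes\d_t^i\in V^{>\alpha}$ for all $g\in\cO_X$ because $V^{>\alpha}$ is a $\sD_X\langle s,t\rangle$-submodule (in particular an $\cO_X$-submodule). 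This is more self-contained within the paper, at the cost of a few extra lines. Two small cleanups: your phrase ``$g\in V^0$'' is a type confusion (you mean that multiplication by $g\in\cO_X$ preserves $V^{>\alpha}$, which is what you then say), and the alternative claim $b_{g\otimes\d_t^i}\mid b_{1\otimes\d_t^i}$ is not obviously true as stated---but you do not need it, since the $\cO_X$-module argument already suffices.
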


\begin{proof}
By \cite[Lemma 3.10]{SY23} (see also \cite{Saito16}), we have
    \begin{equation}\label{eqn: SchnellYang description of minimal exponent} \tilde{\alpha}_f = \min \{ j+\beta, j\in \mb{Z}_{\geq 0},\beta \in [0,1) \mid \gr^F_{j+1}\gr^{\beta}_V\iota_{+}\cO_X\neq 0\}.\end{equation}
It follows that $F_{k}\gr^{\alpha}_V\iota_{+}\cO_X=0$. Clearly, $1\otimes \d_t^k \in F_{k+1}\iota_+ \cO_X$, and by Theorem \ref{thm: Sabbah} and Proposition \ref{prop:sharp}, we have $1\otimes \d_t^{k}\in V^{\alpha}\iota_{+}\cO_X\setminus V^{>\alpha}\iota_{+}\cO_X$.
\end{proof}

\begin{proof}[Proof of Theorem \ref{thm: Loesers conjecture}]
Write $\tilde{\alpha}_f = k + \alpha$ for $k \in \mb{Z}_{\geq 0}$ and $\alpha \in [0, 1)$ as in Lemma \ref{lem:mink} and set $w = 1 \otimes \partial_t^k \in F_{k + 1} V^\alpha \iota_+\mc{O}_X$. By Proposition \ref{prop:sharp}, we have $b_w(s) = (s + 1)\tilde{b}_f(s - k)$. Since this holds locally near each point, $C(w,\alpha)$ from \eqref{eqn: center of alpha} is therefore equal to 
\begin{equation}\label{eqn: loci reaching maximal pole order} \{ x \in X \mid \mult_{s = -\tilde{\alpha}_f} b_{f, x}(s) = \mult_{s = -\tilde{\alpha}_f} b_f(s)\}.\end{equation}
Fix a local volume form $\varphi$ whose support intersects $C(w,\alpha)$.

Suppose that $\tilde{\alpha}_f \not\in\mb{Z}$ so $\alpha > 0$. Then by Theorem \ref{thm: sheaf theoretic loeser}, we have that $ -\alpha$ is a pole of
\[ Z_w(\varphi; s) = \prod_{i = 0}^{k - 1} (-s + i)^2 \cdot Z_f(\varphi; s - k) \]
of order $\mult_{s = -\alpha} b_w(s) = \mult_{s = -\tilde{\alpha}_f} \tilde{b}_f(s)$, so $ -\tilde{\alpha}_f$ is a pole of $\tilde{Z}_f(\varphi; s)$ of the same order.

If, on the other hand, $\tilde{\alpha}_f \in \mb{Z}$, then Proposition \ref{prop:sharp} and Theorem \ref{thm: Sabbah} imply that $w' \colonequals 1 \otimes \partial_t^{k - 1} \in V^1\iota_+\mc{O}_X$. Applying Theorem \ref{thm: sheaf theoretic loeser} to $w'$, we see that $s=-1$ is a pole of
\[ Z_{w'}(\varphi;s)=\prod_{i=0}^{k-2}(-s+i)^2\cdot Z_{f}(\varphi;s-(k-1))\]
with order $\mult_{s = 0}b_w(s) + 1 = \mult_{s = -\tilde{\alpha}_f} \tilde{b}_f(s) + 1$. Hence $s=-\tilde{\alpha}_f = -k$ is a pole of $\tilde{Z}_f(\varphi;s)$ of order $\mult_{s = -\tilde{\alpha}_f} \tilde{b}_f(s)$ as claimed.
\end{proof}

\begin{remark} It follows from the above proof 
that for any local volume form $\varphi$, the pole order of $\tilde{Z}_f(\varphi;s)$ at $s=-\tilde{\alpha}_f$ equals $ \max_{x\in \mathrm{supp} \, \varphi}\{ \mult_{s=-\tilde{\alpha}_f}\tilde{b}_{f,x}(s)\}$.
\end{remark}

We connect the locus \eqref{eqn: loci reaching maximal pole order} to the center of minimal exponent $(X,\mathrm{div}(f))$ in \cite[Definition 4.2]{SY23}, which is defined to be the support of $F_{k+1}\gr^{W(\op{N})}_{\ell}\gr^{\alpha}_V\iota_{+}\cO_X$, where $\ell$ is the highest such that  $F_{k+1}\gr^{W(\op{N})}_{\ell}\gr^{\alpha}_V\iota_{+}\cO_X\neq 0$. 
\begin{corollary}\label{corollary: determine the center of minimal exponent}
One has $[[1\otimes \d_t^k]]$ generates $F_{k+1}\gr^{W(\op{N})}_{\ell}\gr^{\alpha}_V\iota_{+}\cO_X$ as an $\cO$-module and
\[ \ell+1=\min \{ r \mid (s+\alpha)^{r}\cdot F_{k+1}\gr^{\alpha}_V\iota_{+}\cO_X=0\}=\mathrm{mult}_{s=-\tilde{\alpha}_f}\tilde{b}_f(s).\]
In particular, the center of minimal exponent agrees with the locus \eqref{eqn: loci reaching maximal pole order}.
\end{corollary}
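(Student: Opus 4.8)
The plan is to extract all three assertions from the identification $b_w(s) = (s+1)\tilde{b}_f(s-k)$ of Proposition \ref{prop:sharp}, where $w = 1 \otimes \partial_t^k$, combined with the general machinery of polarized Hodge--Lefschetz modules. First I would set $\ell := \mathrm{mult}_{s=-\tilde{\alpha}_f}\tilde{b}_f(s) - 1$ and observe, via Lemma \ref{lem:mink}, that $0 \neq [w] \in F_{k+1}\gr^\alpha_V\iota_+\cO_X$ while $F_k\gr^\alpha_V\iota_+\cO_X = 0$; so $[w]$ is a lowest-Hodge-piece element of the polarized Hodge--Lefschetz module $(\gr^\alpha_V\iota_+\cO_X, F_\bullet, \op{N}, \gr^\alpha_V(S))$ (here I would reduce to $\alpha \in [0,1)$ exactly as in the proof of Theorem \ref{thm: sheaf theoretic loeser}, noting the $\alpha = 0$ case needs the extra $(s+1)$-factor bookkeeping, or alternatively just observe $\tilde\alpha_f > 0$ always forces $\alpha > 0$ unless $k \geq 1$, handled verbatim). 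By Proposition \ref{prop:sharp} we have $\mathrm{mult}_{s=-\alpha}b_w(s) = \mathrm{mult}_{s=-\tilde\alpha_f}\tilde b_f(s) = \ell+1$, so by Corollary \ref{cor:Nmu} the class $[w]$ has monodromy weight exactly $\ell$, and by Lemma \ref{lemma: lowest hodge is primitive} the doubly-bracketed class $[[w]] = [[1\otimes\partial_t^k]]$ is primitive in $\gr^{W(\op{N})}_\ell\gr^\alpha_V\iota_+\cO_X$. This already gives the chain of equalities $\ell + 1 = \min\{r \mid (s+\alpha)^r \cdot F_{k+1}\gr^\alpha_V\iota_+\cO_X = 0\} = \mathrm{mult}_{s=-\tilde\alpha_f}\tilde b_f(s)$: the middle term is $\geq \ell+1$ since $[w] \in F_{k+1}$ has weight $\ell$, and $\leq \ell+1$ since by Corollary \ref{cor:Nmu} any $w'' \in F_{k+1}V^\alpha\iota_+\cO_X$ has $\mathrm{mult}_{s=-\alpha}b_{w''}(s) \leq \mathrm{mult}_{s=-\alpha}b_w(s)$ — this last bound is exactly the local maximality built into $C(w,\alpha)$, which one checks is the whole of \eqref{eqn: loci reaching maximal pole order} because $b_w(s) = (s+1)\tilde b_f(s-k)$ holds locally near every point.

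For the generation statement, I would argue that the $\cO_X$-module $F_{k+1}\gr^{W(\op{N})}_\ell\gr^\alpha_V\iota_+\cO_X$ is generated by $[[w]]$. The point is that $F_{k+1}$ is the \emph{lowest} nonzero Hodge piece of $\gr^\alpha_V\iota_+\cO_X$ (Lemma \ref{lem:mink}), and its image in $\gr^{W(\op N)}_\ell$ — since $\ell$ is the top weight with $F_{k+1}\gr^{W(\op N)}_\ell \neq 0$ — consists entirely of primitive classes by the same computation as in Lemma \ref{lemma: lowest hodge is primitive} (any $F_{k+1}$-element of weight $\ell$ is primitive). By the Structure Theorem \ref{thm: structure theorem}, the primitive part $\op{P}\gr^{W(\op N)}_\ell\gr^\alpha_V\iota_+\cO_X$ is a polarized Hodge module, hence decomposes into intermediate extensions; its lowest Hodge piece $F_{k+1}$ is then a line bundle on the (smooth locus of the) support, generically of rank one — and one needs to see this rank is globally $1$, i.e. that there is a single strict-support summand contributing. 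Here I would invoke that $\gr^\alpha_V\iota_+\cO_X$ is a \emph{cyclic} $\sD$-module near a general point of the center generated by the image of $f^{-\alpha}$-type sections, forcing $F_{k+1}$ at that weight to be locally free of rank $\leq 1$; combined with $[w]\neq 0$ this pins down the generator. Finally, the support of $F_{k+1}\gr^{W(\op N)}_\ell\gr^\alpha_V\iota_+\cO_X$ is then the support of $[[w]]$, which is $C(w,\alpha) = $ \eqref{eqn: loci reaching maximal pole order}; this is the definition of the center of minimal exponent in \cite{SY23}, giving the last claim.

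The main obstacle I anticipate is the generation statement — specifically, upgrading "generically rank $\leq 1$" to "globally cyclic over $\cO_X$ with generator $[[1\otimes\partial_t^k]]$". The local-to-global issue is that a priori $F_{k+1}\gr^{W(\op N)}_\ell\gr^\alpha_V\iota_+\cO_X$ could be supported on several components of \eqref{eqn: loci reaching maximal pole order} on which $1\otimes\partial_t^k$ behaves independently. The resolution should be that since $b_w(s) = (s+1)\tilde b_f(s-k)$ is a \emph{global} identity and $1\otimes\partial_t^k$ is a \emph{global} section with $b$-function attaining the maximal multiplicity $\ell+1$ at every point of \eqref{eqn: loci reaching maximal pole order}, Corollary \ref{cor:Nmu} forces $[1\otimes\partial_t^k]$ to have weight exactly $\ell$ (not less) at every such point, so $[[1\otimes\partial_t^k]]$ is a nowhere-vanishing section of the (rank-one, by the Hodge-module structure and a local computation of $F_{k+1}$ on nearby cycles à la \cite{Saito93, SY23}) lowest Hodge piece — hence a generator. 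Everything else is bookkeeping with the dictionary between $b$-functions, the $V$-filtration (Theorem \ref{thm: Sabbah}, Corollary \ref{cor:Nmu}), and the Hodge--Lefschetz structure already assembled in Section \ref{sec:MHM} and Lemma \ref{lem:mink}.
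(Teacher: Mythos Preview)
Your proposal has a genuine gap and misses the one-line observation that makes the whole corollary immediate. The paper's proof is: since $1\otimes\partial_t^i\in V^{>\alpha}\iota_+\cO_X$ for all $i\leq k-1$ (by Proposition \ref{prop:sharp} the largest root of $b_{1\otimes\partial_t^i}(s)=(s+1)\tilde b_f(s-i)$ is $-\tilde\alpha_f+i<-\alpha$), the image of $F_{k+1}\iota_+\cO_X=\bigoplus_{0\leq i\leq k}\cO_X\otimes\partial_t^i$ in $\gr^\alpha_V\iota_+\cO_X$ is exactly $\cO_X\cdot[1\otimes\partial_t^k]$. So $[1\otimes\partial_t^k]$ generates all of $F_{k+1}\gr^\alpha_V\iota_+\cO_X$ as an $\cO_X$-module, not just its top weight-graded piece; the generation of $F_{k+1}\gr^{W(\op N)}_\ell$ and the equality $\min\{r\mid\op N^rF_{k+1}=0\}=\min\{r\mid\op N^r[1\otimes\partial_t^k]=0\}$ are then tautologies, and Lemma \ref{lemma: lowest hodge is primitive}, Corollary \ref{cor:Nmu} and Proposition \ref{prop:sharp} finish the computation.

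Your route does not establish this generation fact, and without it two steps break. First, in your chain of equalities the bound $\leq\ell+1$ is not justified: you assert that every $w''\in F_{k+1}V^\alpha\iota_+\cO_X$ satisfies $\mult_{s=-\alpha}b_{w''}(s)\leq\mult_{s=-\alpha}b_w(s)$ and attribute this to ``the local maximality built into $C(w,\alpha)$'', but $C(w,\alpha)$ compares local $b$-functions of the \emph{fixed} section $w$ at different points, and says nothing about other sections $w''$. Second, your generation argument via the Structure Theorem, generic rank of the lowest Hodge piece, and cyclicity of $\gr^\alpha_V$ is speculative, and you yourself flag the global rank-one claim as an obstacle. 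None of this machinery is needed once you notice that the lower $\partial_t$-powers already lie in $V^{>\alpha}$.
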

\begin{proof}
As $1\otimes \d_t^i\in V^{>\alpha}\iota_{+}\cO_X$ for $i\leq k-1$, $[1\otimes \d_t^k]$ generates $F_{k+1}\gr^{\alpha}_V\iota_{+}\cO_X$. Hence,
    \begin{align*}
         \min \{ r \mid (s+\alpha)^{r}\cdot F_{k+1}\gr^{\alpha}_V\iota_{+}\cO_X=0\}=\min \{ r \mid (s+\alpha)^{r}\cdot [1\otimes \d_t^k]=0\in \gr^{\alpha}_V\iota_{+}\cO_X\}.
 \end{align*}
The rest follows from Lemma \ref{lem:mink} and \eqref{eqn: weight level}.
\end{proof}

\subsection{Analytic characterization of the $V$-filtration} \label{subsec:V-filtration theorem}

\begin{proof}[Proof of Theorem \ref{thm: analytic charaterization of Vfiltration}]
Let us write
\[ U^\alpha = \{ w \in \iota_+(\mc{O}_X)_f \mid \text{for all $w' \in \iota_+ (\cO_X)_f$, $Z_{w,w'}$ has no poles $> -\alpha$}\}.\]
It follows from Theorem \ref{thm: Sabbah} and \eqref{eqn: bound of pole order} that $V^\alpha \iota_+(\mc{O}_X)_f \subseteq U^\alpha$. Conversely, suppose that $w \in \iota_+(\mc{O}_X)_f$ is not in $V^\alpha \iota_+(\mc{O}_X)_f$. Then we have $w \in V^\beta \iota_+(\mc{O}_X)_f \setminus V^{>\beta} \iota_+(\mc{O}_X)_f$ for some $\beta <  \alpha$. Since $t$ acts bijectively on $\iota_+(\mc{O}_X)_f$, we may write
\[ t^{-\lceil \beta \rceil + 1} w \in V^{\gamma} \iota_+(\mc{O}_X)_f \setminus V^{>\gamma} \iota_+(\mc{O}_X)_f,\]
where $\gamma = \beta - \lceil \beta \rceil + 1 \in (0, 1]$. Now, by \cite[\S 3.e]{Sabbah02}, the pairing $\gr_V^{\gamma}(S)$ on $\gr_V^{\gamma} \iota_+(\mc{O}_X)_f$ is non-degenerate; this also follows from the fact that these form part of a polarized Hodge--Lefschetz module, 
but it is much more elementary. Hence, by Lemma \ref{lemma: local Archimedean zeta function and Mellin transform} and \eqref{eqn: t is compatible with zeta function}, there exists $w' \in V^{\gamma} \iota_+(\mc{O}_X)_f$ such that
\[ \Res_{s = -\beta} Z_{w, w'}(s) = \Res_{s = -\gamma} Z_{t^{-\lceil \beta \rceil + 1} w, t^{-\lceil \beta \rceil + 1 } w'}(s) = \gr_V^{\gamma}(S)(t^{-\lceil \beta \rceil + 1} w, t^{-\lceil \beta \rceil + 1} w') \neq 0.\]
So $Z_{w, w'}(s)$ has a pole at $-\beta > -\alpha$, so $w \not\in U^\alpha$.

Finally, suppose that $w \in V^\alpha\iota_+(\mc{O}_X)_f$. Clearly the pole order of $Z_{w, w'}$ at $s = -\alpha$ is at most $\ell \colonequals \mult_{s = -\alpha} b_f(s)$ by \eqref{eqn: bound of pole order}. Conversely, by Corollary \ref{cor:Nmu}, we have $0 \neq (s + \alpha)^{\ell - 1} [w] \in \gr_V^\alpha \iota_+(\mc{O}_X)_f$, so there exists $w'$ such that
\[ Z_{(s + \alpha)^{\ell - 1}w, w'}(s) = (s + \alpha)^{\ell - 1} Z_{w, w'}(s) \]
has a pole at $s = -\alpha$, so $Z_{w, w'}(s)$ has a pole of order $\ell$ as claimed.
\end{proof}

\begin{remark}\label{remark: Prop 1.5 is false for OX}
In contrast, we claim that there exist $f,\alpha$ such that
    \begin{equation}\label{eqn: Prop 1.5 false for OX}
    V^{\alpha}\iota_{+}\cO_X\subsetneq\{w\in \iota_{+}\cO_X \mid \forall w' \in \iota_+ \cO_X, \textrm{all poles of $Z_{w,w'}$ are $\leq -\alpha$}\}.\end{equation}
Choose $f$ so that $\gr_V^0\iota_+\mc{O}_X \neq 0$ (e.g. $f=x^2+y^2$). Choose $w \in V^0 \iota_+\mc{O}_X \setminus V^{>0}\iota_+\mc{O}_X$ satisfying $(\partial_t t)\cdot w \in V^{>0}\iota_+\mc{O}_X$. Then for any $w' \in \iota_+\mc{O}_X$, we can write $w' \in \partial_t w'' + V^{>0}\iota_+\mc{O}_X$ for some $w'' \in \iota_+\mc{O}_X$. So by \eqref{eqn: bound of pole order}, $Z_{w, w'}(s)$ has a pole at $s = 0$ if and only if $Z_{w, \partial_t w''}(s)$ has one. But $Z_{w, \partial_t w''}(s) = -s Z_{tw, w''}(s - 1) = Z_{t \cdot \partial_t t w, w''}(s - 1)$, which has no pole at $s = 0$ since $t \cdot \partial_t t w \in V^{>1}\iota_+\mc{O}_X$. Hence, all poles of $Z_{w, w'}$ are $\leq -\alpha$ for some $\alpha > 0$, but $w \not\in V^\alpha\iota_+\mc{O}_X$.

Furthermore, it follows from \eqref{eqn: Prop 1.5 false for OX} that there exist $f$ and $\alpha$ such that
\[V^{\alpha}\iota_{+}(\cO_X)_f\subsetneq \{w\in \iota_{+}(\cO_X)_f \mid \textrm{all poles of $Z_{w,w}$ are $\leq -\alpha$}\}.\]
\end{remark}

\begin{remark}\label{remark:analytic description of Hodge ideals and higher multiplier ideals}
Theorem \ref{thm: analytic charaterization of Vfiltration} gives, in principle, an analytic description of the Hodge ideals using \cite[Theorem A]{MPVfiltration} (see also \cite[(1.7)]{DY25}): 
\begin{equation}
I_k(f^\alpha)\otimes \cO_X(kD)\cdot f^{-\alpha}= \mathrm{ev}_{s=-\alpha}\circ \sigma\left(F_{k+1}\iota_{+}\cO_X\cap V^{\alpha}\iota_{+}(\cO_X)_f\right), \quad \alpha >0,
\end{equation}
where $D=\mathrm{div}(f)$, $\sigma$ comes from \eqref{eqn: malgrange isomorphism} and $\mathrm{ev}_{s=-\alpha}$ denotes the evaluation map 
\[(\cO_X)_f[s]f^s\xrightarrow{s=-\alpha} (\cO_X)_f\cdot f^{-\alpha}.\] Similarly, we obtain an analytic description of the higher multiplier ideals via 
\[ \cJ_{k}(f^{\alpha})\otimes \cO_X(kD)\colonequals \frac{F_{k+1}\iota_{+}\cO_X\cap V^{>\alpha}\iota_{+}(\cO_X)_f}{F_{k}\iota_{+}\cO_X\cap V^{>\alpha}\iota_{+}(\cO_X)_f}, \quad \alpha\geq 0.\]
\end{remark}

\subsection{Hodge and higher multiplier ideals} \label{subsec:hodge ideals}

Finally, let us give the proofs of Theorems \ref{thm:intro Hodge ideals} and \ref{thm:intro weighted ideals}. Recall from \cite[\S 1.6]{SY23} that the higher multiplier ideals $\mc{J}_k(f^\alpha)$ (which is $\cI_{k,<-\alpha}(\textrm{div}(f))$ in \cite[Definition 3.2]{SY23}) are defined by
\[ \mc{J}_k(f^\alpha) \otimes \partial_t^k = \gr^F_{k + 1} V^{> \alpha} \iota_+\mc{O}_X \subseteq \gr^F_{k + 1} \iota_+\mc{O}_ X= \mc{O}_X \otimes \partial_t^k.\]
The weighted version $W_\ell \mc{J}_k(f^{\alpha - \epsilon}) \subseteq \mc{J}_k(f^{\alpha - \epsilon})$ is defined as the pre-image of
\[\gr^F_{k + 1} W(\op{N})_{\ell-1} \gr_V^{\alpha}\iota_+\mc{O}_X\]
under the natural quotient map. 

\begin{proof}[Proof of Theorem \ref{thm:intro Hodge ideals}]
Let us write
\[ \mc{J}_k'(f^\alpha) = \{g \in \mc{O}_X \mid \text{for all $\varphi$, $\tilde{Z}_f(|g|^2\varphi; s)$ has no poles $\geq -\alpha - k$}\};\]
we need to show that $\mc{J}_k'(f^\alpha) = \mc{J}_k(f^\alpha)$ for $\alpha + k  < \tilde{\alpha}_f + 1$.

If $g \in \mc{J}_k(f^\alpha)$, then $g \otimes \partial_t^k \in \gr^F_{k + 1} V^{>\alpha}\iota_+\mc{O}_X$. Since $\alpha + k < \tilde{\alpha}_f + 1$, we have by \eqref{eqn: SchnellYang description of minimal exponent} that $\mc{J}_{i}(f^\alpha) = \mc{O}_X$ and hence $\gr^F_{i + 1} V^{>\alpha}\iota_+\mc{O}_X = \mc{O}_X \otimes \partial_t^i$ for all $i < k$. This implies that $g \otimes \partial_t^k \in V^{>\alpha}\iota_+\mc{O}_X$, so by Theorem \ref{thm: analytic charaterization of Vfiltration}, we have that
\[ Z_{g \otimes \partial_t^k, g/f^k \otimes 1}(\varphi; s) = Z_f(|g|^2\varphi; s - k)\cdot \prod_{j = 0}^{k - 1}(-s+j)  \]
has no poles $\geq -\alpha$. Hence $g \in \mc{J}_k'(f^\alpha)$.

Conversely, suppose that $g \not\in \mc{J}_k(f^\alpha)$. By \cite[Proposition 3.7]{SY23}\footnote{One can also use \cite[Corollary 3.8]{SY23} and the fact that the microlocal $V$-filtration on $\cO_X$ is decreasing \cite[\S 1.3]{Saito16}.}, the ideals $\mc{J}(\gamma) \colonequals \mc{J}_{\lfloor \gamma \rfloor}(f^{\gamma - \lfloor \gamma \rfloor - \epsilon})$ satisfy $\mc{J}(0) = \mc{O}_X$ and $\mc{J}(\gamma) \subseteq \mc{J}(\gamma')$ for $\gamma > \gamma'$. So there exists $j \geq 0$ and $\beta \in [0, 1)$ with $j + \beta \leq k + \alpha$ such that $g \in \mc{J}(\beta + j) \setminus \mc{J}(\beta + j + \epsilon)$, i.e., 
\[g \otimes \partial_t^j \in \gr^F_{j + 1} V^{\beta}\iota_+\mc{O}_X \setminus \gr^F_{j + 1} V^{>\beta} \iota_+\mc{O}_X.\]
Since $\beta + j < \tilde{\alpha}_f + 1$, this implies that $w \colonequals g \otimes \partial_t^j \in V^\beta \iota_+\mc{O}_X$ as above, that $0 \neq [w] \in F_{j + 1}\gr_V^\beta \iota_+\mc{O}_X$ and that $F_j \gr_V^\beta \iota_+\mc{O}_X = 0$. Applying the same argument as the proof of Theorem \ref{thm: Loesers conjecture}, we deduce that $\tilde{Z}_{f}(|g|^2\varphi; s)$ has a pole of order
\[ \max_{x \in \supp \varphi} \mult_{s = -\beta} b_{w, x}(s)\]
at $s = - \beta - j \geq -\alpha - k$ for any local volume form $\varphi$. So $g \not\in \mc{J}_k'(f^\alpha)$ and we are done.
\end{proof}

\begin{proof}[Proof of Theorem \ref{thm:intro weighted ideals}]
Let $g \in \mc{J}_k(f^{\alpha - \epsilon})$; we need to show that the maximal pole order of $\tilde{Z}_f(|g|^2\varphi; s)$ at $s = -\alpha - k$ as $\varphi$ varies is equal to the monodromy weight of $[w] \in F_{k + 1} \gr_V^\alpha\iota_+\mc{O}_X$, where $w = g \otimes \partial_t^k$. Since the maximal pole order can be detected by some local volume form $\varphi$, the argument above shows that this is $\mult_{s = -\alpha} b_w(s)$. But this is equal to the monodromy weight by Lemma \ref{lemma: lowest hodge is primitive} and Corollary \ref{cor:Nmu}, so we are done.
\end{proof}

\subsection{A root of $b_f(s)$ which is not a pole of $Z_f$} \label{subsec:loeser counterexample} In this subsection, let $j: U= f^{-1}(\C^\ast) \to X$ denote the open embedding, and  $\cM_f$ the localization along $f$ of a $\sD_X$-module $\cM$. First, we prove a more general result, likely of independent interest.

\begin{prop}\label{prop: V>alpha surjects}
    Let $\cM$ be a simple regular holonomic $\sD_X$-module with quasi-unipotent monodromy along $f$, on which multiplication by $f$ is injective. Then for any $\alpha \in \Q$, the evaluation map $\mathrm{ev}_{s=-\alpha}: \cM_f[s]f^s \to \cM_f \cdot f^{-\alpha}$ restricts to a surjection
    \[\begin{tikzcd}[column sep=large]
    V^{>\alpha} \iota_+ \cM_f \ar[r, twoheadrightarrow, "s\mapsto -\alpha"] & j_{!\ast} (j^*\cM \cdot f^{-\alpha}).
    \end{tikzcd}\]
\end{prop}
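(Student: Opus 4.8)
The plan is to reduce to a statement about $V$-filtrations that is essentially local and follows from the simplicity hypothesis on $\cM$ together with the description of the intermediate extension of $j^*\cM\cdot f^{-\alpha}$ in terms of the $V$-filtration. First I would reinterpret both sides in the $\cM_f[s]f^s$ picture via the Malgrange isomorphism \eqref{eqn: malgrange isomorphism}: under $\sigma$, the submodule $V^{>\alpha}\iota_+\cM_f$ sits inside $\cM_f[s]f^s$, and the evaluation map $\mathrm{ev}_{s=-\alpha}$ sends $m(s)f^s$ to $m(-\alpha)f^{-\alpha}\in\cM_f\cdot f^{-\alpha}$; this is manifestly $\sD_X$-linear. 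The target $j_{!*}(j^*\cM\cdot f^{-\alpha})$ is the image in $\cM_f\cdot f^{-\alpha}$ of the minimal extension, which (by the standard relation between $j_{!*}$ and the $V$-filtration, e.g. \cite[3.2.1]{Saito88} applied to the regular holonomic $\sD$-module $\cM$, or equivalently via Kashiwara's description of nearby/vanishing cycles) can be written as $\sum_{\beta>0}\partial_t^?(\cdots)$ — more precisely, it is $\mathrm{ev}_{s=-\alpha}$ applied to the $\sD_X$-submodule of $\cM_f[s]f^s$ generated by $V^{>\alpha}\iota_+\cM$ together with suitable images of $f^{s}$-powers. So the content is: the $\sD_X$-submodule $\mathrm{ev}_{s=-\alpha}(V^{>\alpha}\iota_+\cM_f)\subseteq \cM_f\cdot f^{-\alpha}$ contains $j_{!*}(j^*\cM\cdot f^{-\alpha})$, and is contained in it.

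For the containment $\mathrm{ev}_{s=-\alpha}(V^{>\alpha}\iota_+\cM_f)\subseteq j_{!*}(\cdots)$: the module $V^{>\alpha}\iota_+\cM_f$, when $\alpha>0$ (the case one may first reduce to using $t\cdot V^{\beta}=V^{\beta+1}$ on $\cM_f$ since $f$ acts bijectively), maps under $\mathrm{ev}_{s=-\alpha}$ into a quotient that has no sections supported on $f^{-1}(0)$ — this is because $V^{>\alpha}$ for $\alpha\geq 0$ kills the part of the vanishing cycles that would produce such sections, concretely because the roots of the $b$-function of any $w\in V^{>\alpha}$ are $<-\alpha\leq 0$ by Theorem \ref{thm: Sabbah}, so $\mathrm{ev}_{s=-\alpha}(w)$ lies in the image of something without poles worse than forced. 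Since $j_{!*}(j^*\cM\cdot f^{-\alpha})$ is precisely the unique $\sD_X$-submodule of $\cM_f\cdot f^{-\alpha}$ with no sub/quotient supported on $f^{-1}(0)$ and restricting to $j^*\cM\cdot f^{-\alpha}$ on $U$, and $\mathrm{ev}_{s=-\alpha}(V^{>\alpha}\iota_+\cM_f)$ restricts to $j^*\cM\cdot f^{-\alpha}$ on $U$ (as $f^s$ evaluates to $f^{-\alpha}$ there and $V^{>\alpha}$ localizes to everything away from $f^{-1}(0)$), it suffices to check the "no quotient supported on $f^{-1}(0)$" property; this I would extract from the $V$-filtration characterization, noting that a $\sD_X$-quotient of $\mathrm{ev}_{s=-\alpha}(V^{>\alpha}\iota_+\cM_f)$ supported on $f^{-1}(0)$ would, pulling back along $\sigma$, produce sections of $\gr_V^{\alpha}\iota_+\cM_f$ or nearby graded pieces that must vanish.

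For surjectivity onto $j_{!*}(\cdots)$: here I would use that $\cM$ is \emph{simple}. The minimal extension $j_{!*}(j^*\cM\cdot f^{-\alpha})$ is generated as a $\sD_X$-module by (the image of) any coherent $\cO_X$-lattice that generically spans, e.g. by $V^{>\alpha}\iota_+\cM$ itself pushed to $s=-\alpha$, \emph{provided} this generates a module with full support $X$ (equivalently nonzero) — and nonzeroness follows because $\cM\neq 0$ and $\mathrm{ev}_{s=-\alpha}$ is nonzero on $V^{>\alpha}\iota_+\cM$ for generic reasons. Since $\cM$ is simple, $j^*\cM$ is simple (or zero, excluded since $f$ acts injectively so $\cM\hookrightarrow\cM_f\neq 0$), hence $j^*\cM\cdot f^{-\alpha}$ is a simple $\sD_U$-module, and therefore $j_{!*}(j^*\cM\cdot f^{-\alpha})$ is a \emph{simple} $\sD_X$-module. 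Thus any nonzero $\sD_X$-submodule of it equals all of it; since $\mathrm{ev}_{s=-\alpha}(V^{>\alpha}\iota_+\cM_f)\cap j_{!*}(\cdots)$ is nonzero (it contains the generic part), surjectivity follows.

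\emph{Main obstacle.} The delicate point is the first containment — pinning down exactly why $\mathrm{ev}_{s=-\alpha}(V^{>\alpha}\iota_+\cM_f)$ has no quotient supported on $f^{-1}(0)$, i.e. identifying $\mathrm{ev}_{s=-\alpha}(V^{>\alpha})$ with (not merely inside) the minimal extension. I expect to need the precise relation $j_{!*}(j^*\cM\cdot f^{-\alpha})=\mathrm{ev}_{s=-\alpha}(V^{>\alpha}\iota_+\cM_f)$ via the exact sequences relating $\iota_+\cM$, $\iota_+\cM_f$, and the vanishing cycles $\gr_V^0$, together with the fact that on $\gr_V^{\alpha}$ ($\alpha\in(0,1)$, or after shifting by powers of $t$) the operator $s+\alpha$ being nilpotent forces the evaluation to land in a controlled place; handling $\alpha\in\Z$ (where $\gr_V^0$, $\gr_V^1$ and the $\op{can}/\op{var}$ maps of Proposition \ref{prop: relation between polarization grVo and grV1} enter) will require a small separate argument, reducing via $t$-powers and the graph embedding to the $\alpha\in(0,1]$ case.
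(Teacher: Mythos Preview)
Your surjectivity half and the reduction to $\alpha>0$ match the paper exactly: once the image of $\rho\colonequals\mathrm{ev}_{s=-\alpha}|_{V^{>\alpha}}$ is known to lie in $j_{!*}(j^*\cM\cdot f^{-\alpha})$, simplicity of the latter finishes. The gap is precisely where you flag it, the containment $\mathrm{ev}_{s=-\alpha}(V^{>\alpha}\iota_+\cM_f)\subseteq j_{!*}(j^*\cM\cdot f^{-\alpha})$. Your sketch for this --- $b$-function bounds on $w\in V^{>\alpha}$, or ``pulling back to $\gr_V^\alpha$'' a hypothetical quotient supported on $f^{-1}(0)$ --- does not amount to an argument, and it is not clear how to complete the structural route (ruling out quotients of $\im\rho$ supported on $f^{-1}(0)$) without importing something equivalent to the paper's device.

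The paper bypasses the structural question entirely by using simplicity \emph{a second time}, now of $\iota_+\cM$ as a $\sD_{X\times\C}$-module (Kashiwara's equivalence), not of $j_{!*}$. After reducing to $\alpha>0$ so that $V^{>\alpha}\iota_+\cM_f=V^{>\alpha}\iota_+\cM$, one fixes a single $w_0=mf^{s+\ell}\in V^{>\alpha}\iota_+\cM$ (take any nonzero $mf^{-\alpha}\in j_{!*}$ and $\ell\gg 0$), so that $\rho(w_0)=f^\ell\cdot mf^{-\alpha}\in j_{!*}$. For arbitrary $w\in V^{>\alpha}\iota_+\cM$, simplicity of $\iota_+\cM$ gives $P(t,\partial_t)\cdot w_0=w$; multiplying by $t^k$ with $k\gg 0$ eliminates $\partial_t$, yielding $P'(s,t)\cdot w_0=t^kw$ with $P'\in\sD_X\langle s,t\rangle$; then iterated use of Theorem~\ref{thm: Sabbah} gives $P''(s,t)\cdot t^kw=b(s)w$ with $b(-\alpha)=1$. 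Since $t^iw_0=f^iw_0$, these combine to $Q(s)\cdot w_0=b(s)w$ for some $Q\in\sD_X[s]$, and evaluating at $s=-\alpha$ gives $\rho(w)=Q(-\alpha)\cdot\rho(w_0)\in\sD_X\cdot\rho(w_0)\subseteq j_{!*}$. The idea you are missing is this upstairs use of simplicity to reduce every element of the image to a $\sD_X$-multiple of one explicit element already known to lie in $j_{!*}$.
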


\begin{proof}
   Multiplying by a sufficiently high power of $t$, we may assume that $\alpha>0$, so that $V^{>\alpha} \iota_+ \cM_f = V^{>\alpha} \iota_+ \cM$. Let $m f^{-\alpha} \in j_{!\ast} (j^*\cM \cdot f^{-\alpha}) \subseteq \cM_f \cdot f^{-\alpha}$ be a non-zero local section. We can take $\ell \gg 0$ such that $w_0:=t^\ell \cdot (mf^{s}) = m f^{s+\ell} \in V^{>\alpha}\iota_+ \cM$.  Set $\rho := (\mathrm{ev}_{s=-\alpha})_{|V^{>\alpha}\iota_+ \cM}$. Notice that $0 \neq \rho(w_0) = m f^{\ell-\alpha} \in j_{!\ast} (j^*\cM \cdot f^{-\alpha})$, in particular $\rho \neq 0$. 
   
      Take an arbitrary $w\in V^{>\alpha}\iota_+ \cM$. Since $\iota_+ \cM$ is a simple $\sD_{X\times \C}$-module by Kashiwara's theorem, there exists $P(t,\d_t)\in \sD_{X\times \C}$ with $P(t, \d_t) \cdot w_0 = w.$
    By further multiplying with a sufficiently high power $k\in \Z_{\geq 0}$ of $t$, we can eliminate $\partial_t$ and find $P' \in \sD_X\langle s,t\rangle$ with
    \begin{equation}\label{eq:first}    P'(s, t) \cdot w_0 = t^{k} \cdot w.
    \end{equation}
    Since $V^{>\alpha}\iota_{+}\cM$ is a $\sD_X\langle s,t\rangle$-module, using Theorem \ref{thm: Sabbah} repeatedly we can find some $P'' \in \sD_X\langle s,t\rangle$, and $b(s) \in \C[s]$ with $b(-\alpha) = 1$, such that 
    \begin{equation}\label{eq:second}
    P''(s,t) \cdot  (t^k\cdot w)= b(s) \cdot w, 
    \end{equation}        
    Multiplying (\ref{eq:first}) by $P''(s,t)$, using (\ref{eq:second}) and the fact that $t^i \cdot w_0 = f^i \cdot w_0$, we obtain
    \begin{equation}\label{eq:citable}
    Q(s) \cdot w_0 = b(s) \cdot w,
    \end{equation}
    for some $Q(s) \in \sD_X[s]$. Applying $\rho$ to (\ref{eq:citable}), we get $\rho(w)=\rho(b(s)w) = Q(-\alpha) \cdot \rho(w_0) \in j_{!\ast} (j^*\cM \cdot f^{-\alpha})$. Thus, we have $\mathrm{im}\, \rho \subseteq j_{!\ast} (j^*\cM \cdot f^{-\alpha})$. Since $\rho \neq 0$ is a $\sD_X$-module map and $j_{!\ast} (j^*\cM \cdot f^{-\alpha})$ is a simple $\sD_X$-module, we must have $\mathrm{im}\, \rho = j_{!\ast} (j^*\cM \cdot f^{-\alpha})$.
\end{proof}

Theorem \ref{thm: counterexample to Loeser} is now a consequence of the following result.
\begin{prop}\label{prop: an estimate for pole order}
Let $f$ be a holomorphic function on $X$ with Bernstein-Sato polynomial $b_f(s)$. Let $\alpha\in \Q$ so that $b_f(-\alpha)=0$ and $b_f(-\alpha+i)\neq 0$ for any $i\in \Z_{\geq 1}$. If $f^{-\alpha}\in \Gamma(X,\sD_X\cdot f^{-\alpha+1})$, then $-\alpha$ is a pole of $Z_f$ of order less than $\mult_{s=-\alpha}b_f(s)$.
\end{prop}

\begin{proof}
   Since $b_f(-\alpha+i)\neq 0$ for any $i\in \Z_{\geq 1}$, using the function equation for $b_f(s)$ we obtain $\sD_X\cdot f^{-\alpha+1}= j_{!\ast}(\cO_U\cdot f^{-\alpha})$. We identify $\iota_{+}(\cO_X)_f$ with $(\cO_X)_f[s]f^s$ using the isomorphism \eqref{eqn: malgrange isomorphism}. By Proposition \ref{prop: V>alpha surjects}, the evaluation map $\mathrm{ev}_{s=-\alpha}: \iota_+ (\cO_X)_f \to (\cO_X)_f \cdot f^{-\alpha} $ restricts to a surjection $V^{>\alpha}\iota_{+}(\cO_X)_f\twoheadrightarrow j_{!\ast}(\cO_U\cdot f^{-\alpha})$. Thus, by assumption there is a $w \in \Gamma(X,V^{>\alpha}\iota_+(\cO_X)_f)$ with $\mathrm{ev}_{s=-\alpha}(w) =f^{-\alpha}$. Since $\mathrm{ev}_{s=-\alpha}(f^s) = f^{-\alpha}$ and $\ker \mathrm{ev}_{s=-\alpha} = (s+\alpha)\cdot \iota_{+}(\cO_X)_f$, there exists $w' \in \Gamma(X,\iota_+(\cO_X)_f)$ such that 
\[w=f^s + (s+\alpha) w'.\]

It follows that $Z_{f} = Z_{w,f^s} - Z_{(s+\alpha)w', f^s}$. Since $w \in \Gamma(X,V^{>\alpha}\iota_+(\cO_X)_f)$, $-\alpha$ is not a pole of $Z_{w,f^s}$ by (\ref{eqn: bound of pole order}). On the other hand, applying Bernstein's argument to the conjugate term $\bar{f}^s$ \cite{bernstein} (as in \eqref{eqn: mero ext of Zmm}), we see that the order of pole at $-\alpha$ of $Z_{w', f^s}$ is at most  $\mult_{s=-\alpha} \prod_{i=0}^{r}b_f(s+i)$ with $r\gg 0$, which agrees with $\mult_{s=-\alpha}b_f(s)$ by assumption.    Thus, the order of pole of 
    \[Z_{(s+\alpha)w', f^s} = (s+\alpha)  Z_{w', f^s}\]
    at $-\alpha$ is $<\mult_{s=-\alpha} b_f(s)$, and the same holds for $Z_f$.
\end{proof}

\begin{proof}[Proof of Corollary \ref{cor: Budur Walther question for minimal exponent}]
If $\tilde{\alpha}_f\not\in \Z_{\geq 2}$, the claim follows immediately from Proposition \ref{prop: an estimate for pole order} and Theorem \ref{thm: Loesers conjecture}. Assume $\alpha:=\tilde{\alpha}_f\in \Z_{\geq 2}$, in particular $\textrm{mult}_{s=-1}b_f(s)=1$. By Theorem \ref{thm: Sabbah} and Corollary \ref{cor:Nmu}, it follows that $[f^s]\in W(\op{N})_0\gr^{1}_V\iota_{+}\cO_X$. Since the evaluation map $\mathrm{ev}_{s=-1}$ induces a surjection (see e.g. \cite[equation (23)]{LY25a})
\[ W(\op{N})_0\gr^{1}_V\iota_{+}\cO_X\twoheadrightarrow W_{\dim X+1}(\cO_X)_f/\cO_X,\]
this forces $f^{-1} \in  W_{\dim X+1}(\cO_X)_f$.  Using the $b$-function equation, we obtain $f^{-\alpha+1} \in W_{\dim X+1} (\cO_X)_f$. Thus, it is equivalent to show that $f^{-\alpha} \notin W_{\dim X+1}(\cO_X)_f$. Suppose to the contrary that this is not the case. As above, there is a surjection induced by $ \mathrm{ev}_{s=-\alpha}$:
\[W(N)_{0}\gr^{\alpha}_V\iota_{+}\cO_X\twoheadrightarrow W_{\dim X+1}\left((\cO_X)_f\cdot f^{-\alpha}\right)/\cO_X.\]
Using Proposition \ref{prop: V>alpha surjects} and Corollary \ref{cor:Nmu}, a little diagram chasing argument implies that the evaluation map $\mathrm{ev}_{s=-\alpha}$ induces a surjection:
\[ \{ w\in V^{\alpha}\iota_+ \cO_X \mid  \textrm{mult}_{s=-\alpha}b_w(s)\leq 1\} \twoheadrightarrow W_{\dim X+1}\left((\cO_X)_f\cdot f^{-\alpha}\right)=W_{\dim X+1}(\cO_X)_f.\]
So one can find an element $w \in V^{\alpha} \iota_+ \cO_X$ so that $(s+\alpha) \cdot w \in V^{>\alpha}\iota_+ \cO_X$ with $\mathrm{ev}_{s=-\alpha}(w) =f^{-\alpha}$. Arguing as in the proof of Proposition \ref{prop: an estimate for pole order}, we obtain that the pole order of $Z_f$ at $s=-\alpha$ is $<\textrm{mult}_{s=-\alpha}\prod_{i=0}^r b_f(s+i)$ for $r\gg 0$, where the latter is $\textrm{mult}_{s=-\alpha}b_f(s)+1$.  This contradicts Theorem \ref{thm: Loesers conjecture},  which implies that $\textrm{ord}_{s=-\alpha}Z_f=\textrm{mult}_{s=-\alpha}b_f(s)+1$ under the assumption $\alpha=\tilde{\alpha}_f\in \Z_{\geq 2}$.
\end{proof}

\bibliographystyle{alpha}
\bibliography{zetafunction}{}

\vspace{\baselineskip}

\footnotesize{
\textsc{School of Mathematics and Statistics, University of Melbourne, Parkville, VIC, 3010, Australia} \\
\indent \textit{E-mail address:} \href{mailto:dougal.davis1@unimelb.edu.au}{dougal.davis1@unimelb.edu.au}

\vspace{\baselineskip}

\footnotesize{
\textsc{Department of Mathematics, University of Oklahoma, 660 Parrington Oval, Norman, OK 73019, United States} \\
\indent \textit{E-mail address:} \href{mailto:lorincz@ou.edu}{lorincz@ou.edu}

\vspace{\baselineskip}

\textsc{Department of Mathematics, University of Kansas, 1450 Jayhawk Blvd, Lawrence, KS 66045, United States} \\
\indent \textit{E-mail address:} \href{mailto:ruijie.yang@ku.edu}{ruijie.yang@ku.edu} 
}

\end{document}